\documentclass[12pt,amsymb,fullpage]{amsart}
\usepackage{amssymb,amscd,pstricks}

\newtheorem{theorem}{Theorem}[section]
\newtheorem{defn}[theorem]{Definition}

\newtheorem{lemma}[theorem]{Lemma}

\newtheorem{eple}[theorem]{Example}
\newtheorem{rmk}[theorem]{Remarks}
\newtheorem{dsc}[theorem]{Discussion}
\newtheorem{nota}[theorem]{Notation}

\newsavebox{\indbin}
\savebox{\indbin}{\begin{picture}(0,0)
\newlength{\gnu}
\settowidth{\gnu}{$\smile$} \setlength{\unitlength}{.5\gnu}
\put(-1,-.65){$\smile$} \put(-.25,.1){$|$}
\end{picture}}

\newcommand{\be}{\begin{enumerate}}
\newcommand{\bd}{\begin{defn}}
\newcommand{\bt}{\begin{theorem}}
\newcommand{\bl}{\begin{lemma}}
\newcommand{\ee}{\end{enumerate}}
\newcommand{\ed}{\end{defn}}
\newcommand{\et}{\end{theorem}}
\newcommand{\el}{\end{lemma}}

\begin{document}
\title{Nonstandard Methods for Solving the Heat Equation}
\author{Tristram de Piro}
\address{Flat 3, Redesdale House, 85 The park, Cheltenham, GL50 2RP }
 \email{t.depiro@curvalinea.net}
\thanks{}
\begin{abstract}
We apply convergence results for discrete Markov chains, to prove the existence of an equilibrium limit in the nonstandard heat equation. We construct a nonstandard backward martingale from a nonstandard solution, and show, using the Feynman-Kac method, how to derive an explicit formula for such solutions, when the initial condition is S-continuous. Finally, we prove that that the nonstandard solution to the heat equation, with a smooth initial condition, specialises to the classical solution.
\end{abstract}
\maketitle

This paper is concerned with nonstandard approaches to the heat equation. Arguably, interest in these methods goes back to Joseph Fourier, (1768-1830), and Pierre Simon Laplace, (1749-1827), who preferred the use of Newtonian infinitesimals, before a standard version of the calculus was available, in about 1820. Indeed, Fourier wrote an essay, "Theorie du mouvement de la chaleur dans les corps solides", in 1811, published between 1819 and 1820, in which he considers the solution of the heat equation on an infinite line, obtaining an explicit solution with the use of Fourier transforms. In an earlier work of 1807, "Memoire sur la propogation de la chaleur", he employs a Fourier series solution which Laplace later recognised as solving the heat equation on a bounded domain.\\

Laplace's work on probability in connection with the heat equation is also interesting. In 1809, in his "Memoires sur les Approximations des Formules qui sont Fonctions de Tres Grandes Nombres et sur leur Application aux Probabilites", Laplace derives the Central Limit Theorem. In his later 1814 essay, "Essai Philosophique sur les Probabilites des Jeux", he formulates the idea behind martingale strategies for fair games, a precursor to modern nonstandard stochastic analysis. All this is tied in with work on the heat equation, using the method of finding probability distributions by differential equations. He does this to find the distribution of the average inclination of n independent satellite orbits in his 1809 memoir.\\

Fourier's method now constitutes a core of modern analysis, and we consider this technique in the final part of the paper. Laplace's work anticipates a general probabilistic method referred to as the Fokker-Planck formula, the converse method, using stochastic processes as a way of solving the heat equation, is generally now known as the Feynman-Kac formula. This converse method, in the guise of reverse martingales, constitutes the second part of this paper.\\

The results of this paper are particularly interesting because they clarify work due to Fourier and Laplace which has been lost. They are also very relevant to modern mathematics, which has seen an explosion of interest in nonstandard stochastic analysis over the past 40 years. The methods of the last two parts of this paper can be applied to other partial differential equations, most importantly Schrodinger's equation, which with the insertion of an appropriate constant, is identical to the heat equation in form. Indeed, it is the author's hope that this paper can serve as a template for analysis of the Schrodinger equation for a free particle. A statistical interpretation of the propagator for such an equation would greatly simplify work on the Feynman path integral by replacing the totality of paths with the paths of Brownian motion. It is known that such standard methods lead to insights into certain partial differential equations with a potential term, via the Feynman-Kac formula. The connection with Schrodinger's equation with a potential is a new possibility, and one would envisage that the techniques of nonstandard stochastic analysis, see \cite{and},\cite{alb} and \cite{adv}, Chapter 9, on the Martingale Representation Theorem, might become very relevant here. Some preliminary work on a nonstandard solution to the heat equation was done in \cite{han}, but for an unbounded domain, while we consider a bounded domain. The use of statistical methods is mainly avoided there, with an application of Stirling's formula rather than the Central Limit Theorem to achieve the final result in IV.13.  \\

The paper is divided into three parts. We identify the principal results. In Theorem \ref{measurable}, we establish a rate of convergence to equilibrium, using the theory of discrete Markov chains. This ends the first part of the paper. In Lemma \ref{equivalent}, we establish the statistical nature of the heat equation. We apply the method of reverse martingales in Theorem \ref{martingale}. After some error estimates for the Central Limit Theorem, in Lemma \ref{centrallimit}, we achieve the main result of the second part which is an explicit description of the nonstandard solution to the heat equation, Theorem \ref{solution}. On specialisation, this agrees with classical result, Theorem \ref{convolution}, which will be further verified in the final part of the paper. The final part employs the nonstandard theory of Fourier analysis, to find a different approach, which corresponds to the classical quantum theory. The main result here is Theorem \ref{specialises}, which shows that the nonstandard solution specialises to the classical solution, in the case where we start with a smooth initial condition. We finally note, in Theorem \ref{equilibrium}, that our final two approaches provide a much faster convergence to equilibrium than given by the Markov theory presented here. However, a faster rate of convergence, in the Markov setting, applying results in discrete harmonic analysis, can be found in \cite{Sil1}, see also \cite{Sil2}, although a slightly different Markov chain is used there. The former book might provide insights into an explicit solution when the initial condition fails to be bounded or $S$-continuous.\\

One of the most fundamental results in the theory of Markov chains is the following;\\

\begin{theorem}
\label{coupling}
Let $P$ be the transition matrix of an irreducible, aperiodic,positive recurrent Markov chain, $\{X_{n}\}_{n\geq 0}$, with invariant distribution $\pi$. Then, for any initial distribution, $P(X_{n}=j)\rightarrow\pi_{j}$, as $n\rightarrow\infty$. In particular;\\

$p_{ij}^{(n)}\rightarrow \pi_{j}$, for all states $i,j$, as $n\rightarrow\infty$\\

\end{theorem}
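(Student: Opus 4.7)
The approach I would take is the classical coupling argument, which gives a clean proof of the stated convergence. I would construct a joint Markov chain $(X_n, Y_n)$ on the product state space $S \times S$, where $(X_n)$ starts from the given initial distribution and $(Y_n)$ is an independent copy of the chain started in the stationary distribution $\pi$. The product chain has transition probabilities $p_{(i,k),(j,l)} = p_{ij}p_{kl}$. After the first meeting time $T = \inf\{n \geq 0 : X_n = Y_n\}$, I would glue the trajectories together, so that $Y_n = X_n$ for $n \geq T$; by the strong Markov property this preserves the marginal laws of both components.

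The key step is to prove that $T < \infty$ almost surely. For this, I would first establish that the product chain is irreducible on $S \times S$. This is where aperiodicity of $P$ enters: a standard number-theoretic lemma shows that an irreducible aperiodic chain satisfies $p_{ii}^{(n)} > 0$ for all sufficiently large $n$, and hence $p_{ij}^{(n)} > 0$ for all $i,j$ once $n$ is large enough. It follows that $p_{(i,k),(j,l)}^{(n)} = p_{ij}^{(n)} p_{kl}^{(n)} > 0$ for large $n$, giving irreducibility of the product chain. Since $\pi \otimes \pi$ is stationary for the product chain, it is also positive recurrent, so the diagonal $\{(i,i) : i \in S\}$ is visited with probability one, which yields $T < \infty$ a.s.

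With $T$ finite almost surely, I would close the argument by invoking the coupling inequality. Since $(Y_n)$ has distribution $\pi$ for every $n$, and $X_n = Y_n$ on $\{T \leq n\}$, one obtains
\[ |P(X_n = j) - \pi_j| = |P(X_n = j) - P(Y_n = j)| \leq P(X_n \neq Y_n) \leq P(T > n), \]
and the right-hand side tends to $0$ as $n \to \infty$. The pointwise statement $p_{ij}^{(n)} \to \pi_j$ is the special case where the initial distribution is the point mass at $i$.

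The main obstacle will be the irreducibility of the product chain, which is the one place where aperiodicity is genuinely used; without it, the argument fails, since, for example, a period-$2$ chain paired with itself is trapped on a sublattice of $S \times S$ and the meeting time can be infinite. The number-theoretic input needed is the fact that a set of positive integers closed under addition and of greatest common divisor $1$ contains every sufficiently large integer, applied to the set of return times $\{n : p_{ii}^{(n)} > 0\}$.
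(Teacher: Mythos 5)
Your proposal is correct and follows essentially the same coupling argument the paper uses: form the independent product chain $(X_n,Y_n)$, use aperiodicity to show the product chain is irreducible, deduce $P(T<\infty)=1$ from positive recurrence of the product chain (with stationary distribution $\pi\otimes\pi$), and conclude by bounding $|P(X_n=j)-\pi_j|$ by $P(T>n)$. The only cosmetic difference is that you glue the trajectories after $T$ and invoke the coupling inequality, whereas the paper avoids the explicit gluing and instead uses the identity $P(X_n=j,\,n\geq T)=P(Y_n=j,\,n\geq T)$ directly to arrive at the same bound.
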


\begin{proof}
A good reference for this result is \cite{N}. However, we give the proof as it is used and modified later. Let the initial distribution be $\lambda$, and let $I$ be the state space. Choose $\{Y_{n}\}_{n\geq 0}$, such that $\{X_{n}\}_{n\geq 0}$ and $\{Y_{n}\}_{n\geq 0}$ are independent, with $\{Y_{n}\}_{n\geq 0}$ Markov $(\pi,P)$. Let $T=inf\{n\geq 1:X_{n}=Y_{n}\}$. We claim that $P(T<\infty)=1$, $(*)$. Let $W_{n}=(X_{n},Y_{n})$. Then $\{W_{n}\}_{n\geq 0}$ is a Markov chain on $I\times I$. By independence, it has transition probabilities given by;\\

$\overline{p}_{(i,j)(k,l)}=p_{ik}p_{jl}$ $(\dag)$\\

and initial distribution $\mu_{(i,j)}=\lambda_{i}\pi_{j}$. A simple calculation, using $(\dag)$, shows that;\\

$\overline{p}^{(n)}_{(i,j)(k,l)}=p_{ik}^{(n)}p_{jl}^{(n)}$ for fixed states $i,j,k,l$\\

As $P$ is irreducible and aperiodic, we have that $min(p_{ik}^{(n)},p_{jl}^{(n)})>0$, for sufficiently large $n$. Hence, for such $n$, $\overline{p}^{(n)}_{(i,j)(k,l)}>0$ and $\overline{P}$ is irreducible. A similar straightforward calculation gives that the distribution $\pi_{(i,j)}=\pi_{i}\pi_{j}$ is invariant for $\overline{P}$. By well known results, this implies that $\overline{P}$ is positive recurrent. Fix a state $b\in I$, and let $S=inf\{n\geq 1:X_{n}=Y_{n}=b\}$. Then $S$ is the first passage time in the system $\{W_{n}\}_{n\geq 0}$ to $(b,b)$, and $P(S<\infty)=1$ follows by known results, and the fact that $\overline{P}$ is irreducible and recurrent. Clearly $P(S<\infty)\leq P(T<\infty)$, so $(*)$ follows. We now calculate;\\

$P(X_{n}=j)=P(X_{n}=j,n\geq T)+P(X_{n}=j,n<T)$\\

$=P(Y_{n}=j,n\geq T)+P(X_{n}=j,n<T)$\\

by definition of $T$ and the fact that $\{X_{n}\}_{n\geq 0}$ and $\{Y_{n}\}_{n\geq 0}$ have the same transition matrix. Then;\\

$P(X_{n}=j)=P(Y_{n}=j,n\geq T)+P(Y_{n}=j,n<T)$\\

$\indent \ \ \ \ \ \ \ \ \ \ \ -P(Y_{n}=j,n<T)+P(X_{n}=j,n<T)$\\

$=P(Y_{n}=j)-P(Y_{n}=j,n<T)+P(X_{n}=j,n<T)$\\

$=\pi_{j}-P(Y_{n}=j,n<T)+P(X_{n}=j,n<T)$ $(**)$\\

We have that $P(Y_{n}=j,n<T)\leq P(n<T)$ and $P(n<T)\rightarrow P(T=\infty)=0$ as $n\rightarrow\infty$, using $(*)$. Similarly, $P(X_{n}=j,n<T)\rightarrow 0$ as $n\rightarrow\infty$. It follows that $P(X_{n}=j)\rightarrow \pi_{j}$, using $(**)$, as required. The final claim is a consequence of the fact that $p_{ij}^{(n)}=P(X_{n}=j)$ where the initial distribution of $X_{0}$ is the dirac function $\delta_{i}$.

\end{proof}

We now establish a rate of convergence result.

\begin{lemma}
\label{convergence}
Let $P$ be the transition matrix for a finite irreducible aperiodic Markov chain. Then there exists $m\geq 1$ and $\rho\in (0,1)$, such that;\\

$|p_{ij}^{(n)}-\pi_{j}|\leq (1-\rho)^{{n\over m}-1}$, for all states $i,j$\\

where $\pi$ is the limiting distribution guaranteed by Theorem \ref{coupling}.

\end{lemma}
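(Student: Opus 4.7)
My plan is to use the coupling argument from Theorem \ref{coupling}, sharpened by a Doeblin minorization that exploits finiteness of the state space.

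First, since $P$ is finite, irreducible, and aperiodic, a standard number-theoretic argument on the gcd of return times produces an integer $m \geq 1$ such that every entry of $P^m$ is strictly positive; that is, $p_{ij}^{(m)}>0$ for all $i,j \in I$. Set
\[
\rho = \sum_{j\in I}\min_{i\in I} p_{ij}^{(m)},
\]
so that $\rho\in (0,1]$ (and in fact $\rho<1$ unless the chain already equalises in $m$ steps, in which case the result is trivial). Writing $\nu_j = \rho^{-1}\min_{i} p_{ij}^{(m)}$ gives a probability distribution $\nu$ on $I$, and we obtain the Doeblin decomposition
\[
p_{ij}^{(m)} = \rho\,\nu_j + (1-\rho)\,Q_{ij},
\]
where $Q=(Q_{ij})$ is a stochastic matrix obtained by normalising the non-negative remainder.

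Next, I would re-run the coupling construction of Theorem \ref{coupling} in blocks of length $m$, but now coupled explicitly via this decomposition. Consider independent copies $\{X_n\}$ from the initial distribution $\lambda$ and $\{Y_n\}$ from $\pi$, and let $\{W_{km}\}_{k\geq 0}$ denote the pair chain observed every $m$ steps. At each block of $m$ steps, given the present pair $(X_{km},Y_{km})$, one independently flips a coin with success probability $\rho$. On success, draw a common next state from $\nu$ and set $X_{(k+1)m}=Y_{(k+1)m}$ equal to this sample; on failure, evolve $X$ and $Y$ independently through the residual kernel $Q$. Once $X$ and $Y$ have coincided, run them through the same transitions thereafter. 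A direct check shows the marginals of $X$ and $Y$ agree with $P^m$, so the coupling is faithful, and the coupling time $T=\inf\{n\geq 1 : X_n=Y_n\}$ satisfies $P(T>km)\leq (1-\rho)^k$ because no earlier successful coin would have failed to couple.

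For general $n$, pick $k = \lfloor n/m\rfloor$, so $km\leq n$ and $k\geq n/m - 1$; then $P(T>n)\leq P(T>km)\leq (1-\rho)^{n/m - 1}$. Finally, taking $\lambda=\delta_i$ and applying the identity $(**)$ from the proof of Theorem \ref{coupling},
\[
|p_{ij}^{(n)}-\pi_j| = |P(X_n=j)-P(Y_n=j)| \leq P(T>n) \leq (1-\rho)^{n/m - 1},
\]
which is the desired bound.

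The only real obstacle is the coupling construction itself: one must verify that the joint scheme defined blockwise through the Doeblin splitting really produces a Markov pair whose component marginals are governed by $P^m$, so that the coupling inequality in the last display applies legitimately. Once this bookkeeping is in place, the geometric decay falls out immediately from the block structure, and the rate $\rho$ and block length $m$ are both furnished by the finite aperiodic irreducibility assumption.
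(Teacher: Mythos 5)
Your argument is correct, but it takes a genuinely different route from the paper. The paper stays with the \emph{independent} coupling already set up in the proof of Theorem \ref{coupling}: letting $\rho>0$ be a uniform lower bound on all entries of $P^m$, it bounds the block-meeting probability directly via
\[
P_{k,l}(T\leq m)\ \geq\ \sum_{u}\overline{p}^{(m)}_{(k,l),(u,u)}=\sum_{u}p^{(m)}_{ku}p^{(m)}_{lu}\ \geq\ \rho\sum_{u}p^{(m)}_{ku}=\rho,
\]
and then chains these blocks together using the Markov property to get $P(T>km)\leq(1-\rho)^{k}$. You instead introduce a Doeblin minorization $p_{ij}^{(m)}=\rho\,\nu_j+(1-\rho)Q_{ij}$ with $\rho=\sum_j\min_i p_{ij}^{(m)}$ and build a \emph{new} block coupling in which the two chains coalesce with probability $\rho$ per block by drawing a common state from $\nu$. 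Both yield the claimed inequality; the trade-offs are real. Your $\rho$ is the total overlap mass and therefore at least $N$ times the paper's uniform entrywise lower bound (so a sharper decay rate), but you pay for it by having to construct and verify a fresh joint process — in particular, you must fill in the intermediate times between block boundaries so that $X_n$ and $Y_n$ have the correct marginals for \emph{every} $n$, not just $n$ a multiple of $m$, and ensure the two copies move in lockstep once they have coalesced; you rightly flag this as the delicate point. The paper sidesteps all of that by never leaving the independent coupling it already has in hand, at the cost of a cruder constant. Both are standard and correct; they are different instantiations of the same block/Doeblin philosophy.

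One small housekeeping matter: when $\rho=1$ (all rows of $P^m$ equal) the lemma's requirement $\rho\in(0,1)$ forces you to shrink $\rho$ to any value strictly below $1$, which still works because $|p^{(n)}_{ij}-\pi_j|=0$ for $n\geq m$ and the bound exceeds $1$ for $n<m$; you note this only in passing, and it should be stated explicitly.
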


\begin{proof}
From Theorem \ref{coupling}, taking the initial distribution of $X_{0}$ to be $\delta_{i}$, we have that;\\

$P(X_{n}=j)=\pi_{j}-P(Y_{n}=j,n<T)+P(X_{n}=j,n<T)$\\

Hence;\\

$|p_{ij}^{(n)}-\pi_{j}|\leq P(n<T)$\\

As $P$ is irreducible and aperiodic, we have that $p_{kl}^{(n)}>0$ for all sufficiently large $n$, and all states $k,l$. As $P$ is finite, there exists an $m\geq 1$ such that $p_{kl}^{(m)}>0$ for all $k,l$. In particular, there exists $\rho\in (0,1)$ such that $p_{kl}^{(m)}\geq\rho$. We have that;\\

$P_{k,l}(T\leq m)\geq \sum_{u} \overline{p}^{(m)}_{(k,l),(u,u)}=\sum_{u}p_{ku}^{(m)}p_{lu}^{(m)}\geq \rho\sum_{u}p_{ku}^{(m)}=\rho$\\

$P_{k,l}(T>m)\leq (1-\rho)$\\

$P(T>m)=\sum_{(k,l)}P_{(k,l)}(T>m)\delta_{ik}\pi_{l}\leq (1-\rho)$\\

Moreover;\\

$P(T>n)\leq P(T>[{n\over m}]m)$\\

We claim that, for $k\geq 1$, $P(T>(k+1)m|T>km)\leq 1-\rho$. We have, using the total law of probability, the Markov property and the definition of $T$, that;\\

$P(T>(k+1)m|T>km)$\\

$=\sum_{i_{km}\neq j_{km}, i_{km-1}\neq j_{km-1},\ldots,i_{0}\neq j_{0}}P(T>(k+1)m|W_{km}=(i_{km},j_{km}),W_{km-1}=(i_{km-1},j_{km-1}),\ldots, W_{0}=(i_{0},j_{0}))P(W_{km}=(i_{km},j_{km}),W_{km-1}=(i_{km-1},j_{km-1}),\ldots, W_{0}=(i_{0},j_{0})|T>km)$\\

$=\sum_{i_{km}\neq j_{km}, i_{km-1}\neq j_{km-1},\ldots,i_{0}\neq j_{0}}P(T>(k+1)m|W_{km}=(i_{km},j_{km}))P(W_{km}=(i_{km},j_{km}),W_{km-1}=(i_{km-1},j_{km-1}),\ldots, W_{0}=(i_{0},j_{0})|T>km)$\\

$\leq (1-\rho)\sum_{i_{km}\neq j_{km}, i_{km-1}\neq j_{km-1},\ldots,i_{0}\neq j_{0}}P(W_{km}=(i_{km},j_{km}),W_{km-1}=(i_{km-1},j_{km-1}),\ldots, W_{0}=(i_{0},j_{0})|T>km)$\\

$=(1-\rho)$\\

Inductively, we have that;\\

$P(T>km)=P(T>km,T>(k-1)m)$\\

$=P(T>km|T>(k-1)m)P(T>(k-1)m)$\\

$\leq P(T>km|T>(k-1)m)(1-\rho)^{k-1}$\\

$\leq (1-\rho)^{k}$\\

It follows that $|p_{ij}^{(n)}-\pi_{j}|\leq (1-\rho)^{[{n\over m}]}\leq (1-\rho)^{{n\over m}-1}$, as required. \\

\end{proof}

\begin{lemma}
\label{transition}
Let $P$ define a Markov chain with $N$ states, $\{0,1,\ldots,N-1\}$, where $N$ is odd,  such that the transition probabilities of moving from state i to i-2,i,i+2 (mod N) respectively are ${1\over 2}$. Then $P$ is irreducible, aperiodic and $\pi$, defined by $\pi_{i}={1\over N}$, for $0\leq i\leq N-1$, defines an invariant distribution. Moreover, we can choose $m=2N$ and $\rho={1\over 4^{N}}$ in Lemma \ref{convergence}. It follows that;\\

$|p_{ij}^{(n)}-{1\over N}|\leq ({4^{N}-1\over 4^{N}})^{{n\over 2N}-1}=\epsilon_{n}$\\

Moreover, for any initial probability distribution $\lambda_{0}$, letting $\lambda_{j}^{n}=P(X_{n}=j)$, we have that;\\

$|\lambda_{j}^{n}-{1\over N}|\leq \epsilon_{n}$, $0\leq j\leq N-1$ $(*)$\\

For any initial distribution $\mu_{0}=\mu_{0}^{+}-\mu_{0}^{-}$, with sums $K^{+}$ and $K^{-}$, letting $\mu^{n}=\mu_{0}P^{n}$, and $K=K^{+}-K^{-}$ we have that;\\

$|\mu_{j}^{n}-{K\over N}|\leq (K^{+}+K^{-})\epsilon_{n}$, $0\leq j\leq N-1$\\

\end{lemma}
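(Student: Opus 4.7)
The plan is to verify the structural properties of $P$, apply Lemma \ref{convergence} with the claimed $m=2N$ and $\rho=1/4^N$, and then extend the resulting transition-kernel bound to arbitrary initial probability measures and signed measures.

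For the structural claims, since $N$ is odd, $\gcd(2,N)=1$, so the additive subgroup of $\mathbb{Z}/N\mathbb{Z}$ generated by $\pm 2$ is all of $\mathbb{Z}/N\mathbb{Z}$; hence any state can be reached from any other, giving irreducibility. For aperiodicity, one checks that a return to a fixed state is achievable in lengths whose greatest common divisor equals $1$, for instance a loop of length $2$ (by $+2$ followed by $-2$) combined with a loop of length $N$ ($N$ consecutive $+2$ moves, using $2N\equiv 0\pmod N$). Translation invariance of the jump rule makes $P$ doubly stochastic, so $\pi_i=1/N$ is invariant.

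For the quantitative step, since each single-step transition weight is $1/2$, every path of length $m=2N$ from $k$ to $l$ contributes exactly $(1/2)^{2N}=1/4^N$ to $p_{kl}^{(m)}$, so it suffices to exhibit one such path. This reduces to solving $2r\equiv l-k\pmod N$ for a signed net count $r$ of $\pm 2$ moves lying in $[-2N,2N]$ and congruent to $2N$ modulo $2$; the oddness of $N$ makes $2$ invertible mod $N$ and leaves enough parity slack to find such an $r$. This yields $p_{kl}^{(2N)}\geq 1/4^N$, and Lemma \ref{convergence} immediately delivers the bound $|p_{ij}^{(n)}-1/N|\leq\epsilon_n$.

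The extensions are essentially bookkeeping. For a probability $\lambda_0$, write $\lambda_j^n=\sum_i\lambda_i^0 p_{ij}^{(n)}$ and combine $\sum_i\lambda_i^0=1$ with the pointwise bound via the triangle inequality to obtain $(*)$. For a signed initial measure $\mu_0=\mu_0^+-\mu_0^-$ with total masses $K^+,K^-$, apply $(*)$ separately to the normalised probabilities $\mu_0^+/K^+$ and $\mu_0^-/K^-$ (when nonzero), rescale by $K^+$ and $K^-$ respectively to get $|\nu_j^{+,n}-K^+/N|\leq K^+\epsilon_n$ and the analogous minus bound, and combine via the triangle inequality to conclude $|\mu_j^n-K/N|\leq(K^++K^-)\epsilon_n$. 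I expect the one genuinely delicate step to be the path existence for $\rho=1/4^N$: the parity-and-residue condition on path lengths must be simultaneously satisfiable for every source/target pair, and this is precisely where the hypothesis that $N$ is odd does the real work.
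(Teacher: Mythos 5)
Your proof is correct and follows the same overall structure as the paper's: verify irreducibility, aperiodicity, and invariance of the uniform measure; establish $p_{ij}^{(2N)}\geq 1/4^N$ so that Lemma \ref{convergence} applies with $m=2N$ and $\rho=1/4^N$; then propagate the bound to arbitrary probability and signed initial distributions via linearity and the triangle inequality. The one place your route genuinely diverges is the quantitative step. The paper constructs a path of length $2N$ from $i$ to $j$ in two phases: travel to $j$ in some $k\leq N$ forward steps (possible because the multiples $2\cdot 0,\ldots,2(N-1)$ exhaust all residues mod $N$), then spend the remaining $2N-k\geq N$ steps holding at $j$, re-using the earlier aperiodicity estimate $p_{jj}^{(n)}\geq 1/2^{n}$ for $n\geq N$. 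You instead solve the congruence $2r\equiv l-k\pmod N$ directly for a signed net count $r$ of $\pm 2$ moves among $2N$ total, noting that $r$ must be even and in $[-2N,2N]$, and that the oddness of $N$ ensures one of $r_0, r_0-N$ is an admissible even representative. Both proofs pivot on $\gcd(2,N)=1$; yours is a little more self-contained (it does not route through the aperiodicity sub-calculation), while the paper's is more explicitly constructive. The linearity/triangle-inequality extensions match the paper's argument in every essential detail.
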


\begin{proof}
To prove irreducibility, observe that $i+2({N+1\over 2})=i+1$ (mod $N$), hence, $p_{i,i+1}^{({N+1\over 2})}\geq {1\over 2^{{N+1\over 2}}}$, for all states $0\leq i\leq N-1$, $(*)$. To show that all states $i,j$ communicate, it is sufficient, by symmetry, to assume that $i\leq j$. If $i=j$, then we have that $p_{i,i}^{(2)}\geq {1\over 4}$. If $j-i$ is even, we have that $p_{i,j}^{({j-i\over 2})}\geq {1\over 2^{{j-i\over 2}}}$. If $j-i$ is odd, then $j-(i+1)$ is even. We then have that;\\

 $p_{i,j}^{{N+1\over 2}+{j-(i+1)\over 2}}\geq {1\over 2^{{N+1\over 2}}}{1\over 2^{{j-(i+1)\over 2}}}$\\

using $(*)$. To prove aperiodicity, it is sufficient to show that $p_{ii}^{(n)}>0$, for sufficiently large $n$, for any given state $i$ with $0\leq i\leq N-1$. Observe that $i+2N=i$ (mod $N$), hence $p_{ii}^{(N)}\geq {1\over 2^{N}}$. If $n\geq N$, and $n$ is even, then clearly $p_{ii}^{(n)}\geq {1\over 2^{n}}$. If $n\geq N$ and $n$ is odd, then $n-N$ is even, and $p_{ii}^{(n)}\geq {1\over 2^{N}}{1\over 2^{n-N}}={1\over 2^{n}}$. For the invariance claim, we compute;\\

$(\pi P)_{i}=\sum \pi_{j}P_{ji}={1\over N}(P_{i-2,i}+P_{i+2,i})={1\over N}({1\over 2}+{1\over 2})={1\over N}$\\

To find $m$ and $\rho$, observe that, by the aperiodicity calculation, that $p_{ii}^{(n)}\geq {1\over 2^{n}}$, for any $0\leq i\leq N-1$, and $n\geq N$. Observe also that, starting at a given state $i$, we can cover all the states, by moving in one direction, a total of $N$ steps. It follows that $p_{ij}^{(k)}\geq {1\over 2^{k}}$, for \emph{some} $k\leq N$. Choosing some $1\leq k_{ij}\leq N$ for each pair of states $(i,j)$, observe that $2N-k_{ij}\geq N$, therefore, for any states $(i,j)$;\\

$p_{ij}^{(2N)}\geq {1\over 2^{k_{ij}}}{1\over 2^{2N-k_{ij}}}={1\over 4^{N}}$\\

We can, therefore, take $m=2N$ and $\rho={1\over 4^{N}}$. We then have that;\\

$(1-\rho)^{{n\over m}-1}=({4^{N}-1\over 4^{N}})^{{n\over 2N}-1}$\\

and the following claim follows, by Lemma \ref{convergence}. The penultimate claim follows by noting that $\lambda_{n}=\lambda_{0}P^{n}$ and calculating;\\

$\lambda_{j}^{n}=\lambda_{0}^{0}p_{0j}^{(n)}+\lambda_{1}^{0}p_{1j}^{(n)}+\ldots+\lambda_{N-1}^{0}p_{N-1,j}^{(n)}$\\

$=(\lambda_{0}^{0}+\ldots+\lambda_{N-1}^{0})({1\over N})+\lambda_{0}^{0}\epsilon_{n}^{0}+\ldots+\lambda_{N-1}^{0}\epsilon_{n}^{N-1}$\\

$={1\over N}+\epsilon_{n}'$\\

where $\epsilon_{n}^{j}\leq \epsilon_{n}$, for $0\leq j\leq N-1$ and $\epsilon_{n}'\leq\epsilon_{n}$. The final claim follows by observing that;\\

 $\mu^{n}=\mu_{0}P^{n}=\mu_{0}^{+}P^{n}-\mu_{0}^{-}P^{n}=K^{+}\pi_{0}^{+}P^{n}-K^{-}\pi_{0}^{-}P^{n}$ $(**)$\\

 where $\{\pi_{0}^{+},\pi_{0}^{-}\}$ are distributions. We then have, using the previous result, and multiplying by an appropriate constant, that;\\

 $|(K^{+}\pi_{0}^{+}P^{n})_{j}-{K^{+}\over N}|\leq K^{+}\epsilon_{n}$\\

 $|(K^{-}\pi_{0}^{-}P^{n})_{j}-{K^{-}\over N}|\leq K^{-}\epsilon_{n}$\\

 for $0\leq j\leq N-1$. Therefore, combining this with $(**)$, we obtain that;\\

 $|\mu_{j}^{n}-{K\over N}|=|\mu_{j}^{n}-({K^{+}-K^{-}\over N})|\leq (K^{+}+K^{-})\epsilon_{n}$\\

 as required.\\

\end{proof}

\begin{lemma}
\label{nonstandard1}
Let $P$ define a non standard Markov chain with $\eta$ states, $\{0,1,\ldots,\eta-1\}$, for $\eta$ odd infinite, such that the transition probabilities of moving from state i to i-2,i+2 (mod $\eta$) respectively are ${1\over 2}$. Then, if $\epsilon$ is an infinitesimal and\\

 $n\geq 2\eta(1+{log(\epsilon)\over log(4^{\eta}-1)-log(4^{\eta})})$ $(*)$\\

we have for any initial probability distribution $\pi_{0}$, that;\\

$\pi_{j}^{n}\simeq{1\over\eta}$ for $0\leq j\leq \eta-1$ $(**)$\\

If $\mu_{0}=\mu_{0}^{+}-\mu_{0}^{-}$ is a nonstandard distribution with sums $\{K^{+},K^{-}\}$, possibly infinite, then if $K=K^{+}-K^{-}$, and $\epsilon$ is an infinitesimal with $(K^{+}+K^{-})\epsilon\simeq 0$, and $n$ satisfies $(*)$, we obtain that;\\

$\mu_{j}^{n}\simeq{K\over\eta}$ for $0\leq j\leq \eta-1$ $(**)$\

\end{lemma}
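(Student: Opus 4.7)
The plan is to derive Lemma \ref{nonstandard1} directly from Lemma \ref{transition} via the transfer principle, together with a routine logarithmic manipulation that rewrites the decay bound $\epsilon_{n}\leq\epsilon$ as the quantitative threshold $(*)$.

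First, I would observe that Lemma \ref{transition} is, explicit constants and all, a first-order internal statement about the transition matrix $P$ on $N$ states, the uniform invariant distribution $\pi_{j}=1/N$, and the error function $\epsilon_{n}=\left({4^{N}-1\over 4^{N}}\right)^{n/(2N)-1}$. In particular, the bounds $|p_{ij}^{(n)}-1/N|\leq \epsilon_{n}$, $|\lambda_{j}^{n}-1/N|\leq \epsilon_{n}$, and $|\mu_{j}^{n}-K/N|\leq (K^{+}+K^{-})\epsilon_{n}$ all transfer. Applying transfer with $N$ replaced by the odd infinite hypernatural $\eta$ then gives, for every internal $n\in {}^{*}\mathbb{N}$ and every internal distribution on $\{0,1,\ldots,\eta-1\}$, the analogous inequalities, with the same formula for $\epsilon_{n}$.

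Next, I would verify that the hypothesis $(*)$ is precisely the condition $\epsilon_{n}\leq \epsilon$. Taking logarithms of $\epsilon_{n}\leq\epsilon$ and noting that $\log\!\left({4^{\eta}-1\over 4^{\eta}}\right)<0$, so that dividing by this quantity reverses the inequality, produces exactly
$${n\over 2\eta}-1\geq {\log(\epsilon)\over \log(4^{\eta}-1)-\log(4^{\eta})},$$
which is $(*)$. Under $(*)$ we therefore have $\epsilon_{n}\leq\epsilon$, and in particular $\epsilon_{n}$ is infinitesimal.

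The first conclusion $\pi_{j}^{n}\simeq 1/\eta$ is then immediate from the transferred bound $|\pi_{j}^{n}-1/\eta|\leq \epsilon_{n}\leq \epsilon\simeq 0$. For the signed-distribution claim, the transferred inequality $|\mu_{j}^{n}-K/\eta|\leq (K^{+}+K^{-})\epsilon_{n}\leq (K^{+}+K^{-})\epsilon$, combined with the standing hypothesis $(K^{+}+K^{-})\epsilon\simeq 0$, yields $\mu_{j}^{n}\simeq K/\eta$ for all $0\leq j\leq\eta-1$.

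The main delicate point, rather than an obstacle, is that the signed-measure bound in Lemma \ref{transition} must transfer even when the internal masses $K^{+},K^{-}$ are infinite. The standard proof proceeds via the decomposition $\mu_{0}=K^{+}\pi_{0}^{+}-K^{-}\pi_{0}^{-}$ into probability distributions, which is an internal decomposition; transfer therefore delivers the needed inequality uniformly in $K^{+},K^{-}$, and the explicit assumption $(K^{+}+K^{-})\epsilon\simeq 0$ absorbs the potentially large prefactor to produce the infinitesimal conclusion.
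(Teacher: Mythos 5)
Your proposal is correct and takes essentially the same route as the paper: transfer the explicit rate-of-convergence bound of Lemma \ref{transition} to an odd infinite $\eta$, observe that the threshold $(*)$ is exactly the condition $\epsilon_{n}\leq\epsilon$ obtained by taking logarithms (with the sign reversal from dividing by $\log(4^{\eta}-1)-\log(4^{\eta})<0$), and conclude both the probability case and the signed case. The only substantive difference is one of formality: the paper devotes most of its proof to constructing an explicit internal coding ($Seq_{1},Seq_{2},Prob_{N}$, and the maps $G,H,L$ encoding the transition matrix, its powers, and the action on probability vectors) so that the statement of Lemma \ref{transition} can be written as a single first-order sentence to which transfer literally applies; you instead assert that the lemma is a first-order internal statement and move on. In a context where one must persuade a skeptic that transfer applies to a statement about matrix powers and distributions, the paper's coding is the needed bookkeeping; your version is the same argument with that bookkeeping suppressed. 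Your closing remark about the signed-measure case, pointing out that the decomposition $\mu_{0}=K^{+}\pi_{0}^{+}-K^{-}\pi_{0}^{-}$ is internal and that the hypothesis $(K^{+}+K^{-})\epsilon\simeq 0$ absorbs the possibly infinite prefactor, is a helpful expansion of what the paper simply labels ``similar and left to the reader.''
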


\begin{proof}
Let $Seq_{1}=\{f:\mathcal{N}\rightarrow\mathcal{R}\}$ and $Seq_{2}=\{f:\mathcal{N}^{2}\rightarrow\mathcal{R}\}$. We let;\\

\noindent $Prob_{N}=\{f\in Seq_{1}:(\forall_{m\geq N}f(m)=0)\wedge(\forall_{0\leq m\leq N-1}f(m)\geq 0)\wedge\sum_{0\leq m\leq N-1}f(m)=1\}$.\\

encode probability vectors of length $N$. Let $G:\mathcal{N}\rightarrow Seq_{2}$ be defined by;\\

$G(N,0,2)=G(N,0,N-2)={1\over 2}$, $(\forall_{m\neq 2,N-2}G(N,0,m)=0$\\

$(\forall_{0\leq k\leq N-2}\forall_{1\leq m\leq N-1}(G(N,k+1,0)=G(N,k,N-1)$, $G(N,k+1,m)=G(N,k,m-1))$\\

$\forall_{k\geq N}\forall_{m\geq N}G(N,k,m)=0$\\

$G$ encodes the transition matrices for the given Markov chain with $N$ states. Let $H:\mathcal{N}^{2}\rightarrow Seq_{2}$ be defined by;\\

$(\forall_{0\leq i,j\leq N-1})H(1,N,i,j)=G(N,i,j)$\\

$(\forall_{i,j\geq N})H(1,N,i,j)=0$\\

$(\forall_{0\leq i,j\leq N-1}\forall_{n\geq 2})H(n,N,i,j)=\sum_{0\leq k\leq N-1}H(n-1,N,i,k)G(N,k,j)$\\

$(\forall_{i,j>N}\forall_{n\geq 2})H(n,N,i,j)=0$\\

$H$ encodes the powers $G(N)^{n}$ of the transition matrices. We define maps $L(N,n):Prob_{N}\rightarrow Prob_{N}$ by;\\

$(\forall_{0\leq j\leq N-1})L(N,n)(f)(j)=\sum_{0\leq k\leq N-1}f(k)H(n,N,k,j)$\\

$(\forall_{j\geq N})L(N,n)(f)(j)=0$\\

$L(N,n)(f)$ encodes the probability vectors $\pi^{n}$ for an initial distribution $\pi_{0}$ represented by $f$.\\

By a simple rearrangement, we have that the bound in $|\pi_{j}^{n}-{1\over N}|$, from Lemma \ref{transition}, can be formulated in first order logic as;\\

$\forall N\in\mathcal{N}_{odd}\forall\pi\in Prob_{N}\forall\epsilon\in\mathcal{R}_{>0}\forall{n\in\mathcal{N}}(n\geq 2N(1+{log(\epsilon)\over log(4^{N}-1)-log(4^{N})})\rightarrow(|L(n,N)(\pi)(j)-{1\over N}|\leq\epsilon, 0\leq j\leq N-1)$\\

By transfer, we obtain a corresponding result, quantifying over ${^{*}\mathcal{N}}$. Taking $\epsilon$ to be an infinitesimal and $\eta$ to be an infinite odd natural number, we obtain the first result. Observe that by construction of $G,H,L$, the nonstandard Markov chain with $\eta$ states evolves by the usual nonstandard matrix multiplication by the transition matrix, of the initial probability distribution. The remaining claim is similar and left to the reader.

\end{proof}

\begin{defn}
\label{measures}
Let $\eta\in {{^{*}\mathcal{N}}\setminus\mathcal{N}}$, infinite and odd, and let $\nu={\eta^{2}\over 2}$, $\nu\in{{^{*}\mathcal{Q}_{\geq 0}}\setminus\mathcal{Q}}$. We let;\\

$\overline{\Omega_{\eta}}=\{x\in{^{*}\mathcal{R}}:0\leq x<1\}$,  $\overline{\mathcal{T}_{\nu}}=\{t\in{^{*}\mathcal{R}_{\geq 0}}\}$\\

We let $\mathcal{C}_{\eta}$ consist of internal unions of the intervals $[{i\over\eta},{i+1\over\eta})$, for $0\leq i\leq \eta-1$, and let $\mathcal{D}_{\nu}$ consist of internal unions of $[{i\over\nu},{i+1\over\nu})$, for $ i\in{^{*}\mathcal{Z}_{\geq 0}}$.\\

We define counting measures $\mu_{\eta}$ and $\lambda_{\nu}$ on $\mathcal{C}_{\eta}$ and $\mathcal{D}_{\nu}$ respectively, by setting $\mu_{\eta}([{i\over\eta},{i+1\over\eta}))={1\over\eta}$, $\lambda_{\nu}([{i\over\nu},{i+1\over\nu}))={1\over\nu}$, for $0\leq i\leq \eta-1$, $i\in{^{*}\mathcal{Z}_{\geq 0}}$ respectively.\\

We let $(\overline{\Omega_{\eta}},\mathcal{C}_{\eta},\mu_{\eta})$ and $(\overline{\mathcal{T}}_{\nu},\mathcal{D}_{\nu},\lambda_{\nu})$ be the resulting measure spaces, in the sense of \cite{L}. We let $(\overline{\Omega_{\eta}}\times\overline{\mathcal{T}}_{\nu}, \mathcal{C}_{\eta}\times\mathcal{D}_{\nu}, \mu_{\eta}\times \lambda_{\nu})$ denote the corresponding product space.\\

If $f\in V(\overline{\Omega_{\eta}}\times\overline{\mathcal{T}}_{\nu})$ is measurable, we define;\\

${\partial f\over \partial t}({i\over \eta},{j\over \nu})=\nu (f({i\over \eta},{j+1\over \nu})-f({i\over \eta},{j\over \nu}))$, ${\partial f\over \partial t}(x,s)={\partial f\over \partial t}({[\eta x]\over \eta},{[\nu s]\over \nu})$\\

${\partial f\over \partial x}({i\over \eta},{j\over \nu})={\eta\over 2}(f({i+1\over \eta},{j\over \nu})-f({i-1\over \eta},{j\over \nu}))$, ${\partial f\over \partial x}(y,t)={\partial f\over \partial x}({[\eta y]\over \eta},{[\nu t]\over \nu})$\\

where we adopt the usual convention of taking $i$ mod $\eta$.

\end{defn}

\begin{defn}
\label{initial}
Let $f:\overline{\Omega_{\eta}}\rightarrow {^{*}\mathcal{R}}$  be measurable with respect to the $*\sigma$-algebra $\mathcal{C}_{\eta}$, in the sense of \cite{L}. We define $F:\overline{\Omega_{\eta}}\times \overline{\mathcal{T}_{\nu}}\rightarrow{^{*}\mathcal{R}}_{\geq 0}$ by;\\

$F({i\over\eta},{j\over\nu})=(\pi_{f}K^{j})(i)$, for $0\leq i\leq \eta-1$, $j\in{^{*}\mathcal{Z}_{\geq 0}}$\\

$F(x,t)=F({[\eta x]\over\eta},{[\nu t]\over\nu})$, $(x,t)\in \overline{\Omega_{\eta}}\times \overline{\mathcal{T}_{\nu}}$\\

where $\pi_{f}$ is the nonstandard distribution vector corresponding to $f$, $K$ is the transition matrix of the above Markov chain with $\eta$ states, and $K^{j}$ denotes a nonstandard power.

\end{defn}

\begin{theorem}
\label{measurable}
Let $F$ be as defined in Definition \ref{initial}, then $F$ is measurable with respect to $\mathcal{C}_{\eta}\times\mathcal{D}_{\nu}$, and, moreover $F$ is the unique solution to the nonstandard heat equation;\\

${\partial F\over \partial t}-{\partial^{2}f\over \partial x^{2}}=0$\\

with initial condition $f$. If $f$ is bounded, then for $\tau\geq {16(4^{\eta})log(\eta)\over \eta}$, we have that $F_{\tau}\simeq C$, where $C=\int_{\overline{\Omega}_{\eta}}fd\mu_{\eta}$.\\
\end{theorem}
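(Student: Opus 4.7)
Measurability is immediate from the floor construction: by $F(x,t) = F(\frac{[\eta x]}{\eta}, \frac{[\nu t]}{\nu})$, the function $F$ is constant on each rectangle $[\frac{i}{\eta}, \frac{i+1}{\eta}) \times [\frac{j}{\nu}, \frac{j+1}{\nu})$, so every level set is an internal union of such rectangles, which lies in $\mathcal{C}_{\eta} \times \mathcal{D}_{\nu}$. To verify the heat equation at a grid point $(\frac{i}{\eta}, \frac{j}{\nu})$, note that the only nonzero entries of $K$ are $K_{i,i\pm 2} = \frac{1}{2}$, so one step of the chain gives $F(\frac{i}{\eta}, \frac{j+1}{\nu}) = (\pi_{f} K^{j+1})(i) = \frac{1}{2}(F(\frac{i-2}{\eta}, \frac{j}{\nu}) + F(\frac{i+2}{\eta}, \frac{j}{\nu}))$. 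Multiplying the resulting time increment by $\nu = \frac{\eta^{2}}{2}$ produces exactly $\frac{\eta^{2}}{4}(F(\frac{i+2}{\eta}, \frac{j}{\nu}) - 2F(\frac{i}{\eta}, \frac{j}{\nu}) + F(\frac{i-2}{\eta}, \frac{j}{\nu}))$, and iterating the central difference $\frac{\partial F}{\partial x}$ from Definition \ref{measures} shows this equals $\frac{\partial^{2} F}{\partial x^{2}}(\frac{i}{\eta}, \frac{j}{\nu})$. Uniqueness follows because, when rearranged, the heat equation expresses $F(\cdot, \frac{j+1}{\nu})$ as a first-order recurrence in the values $F(\cdot, \frac{j}{\nu})$, so internal induction on $j \in {^{*}\mathcal{Z}_{\geq 0}}$, available by transfer, reconstructs $F$ from the initial datum $f$.

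For the equilibrium claim, I apply the signed version of Lemma \ref{nonstandard1} to the initial ``distribution'' $\pi_{f}$. Writing $\pi_{f} = \pi_{f}^{+} - \pi_{f}^{-}$ with sums $K^{\pm}$, one has $K := K^{+} - K^{-} = \sum_{i} f(\frac{i}{\eta}) = \eta \int_{\overline{\Omega}_{\eta}} f\, d\mu_{\eta} = \eta C$, and the hypothesis $|f| \leq M$ gives $K^{+} + K^{-} \leq M\eta$. The threshold $\tau \geq \frac{16 (4^{\eta}) \log(\eta)}{\eta}$ translates to $n = \tau \nu \geq 8\eta \cdot 4^{\eta} \log(\eta)$, whence $\frac{n}{2\eta} - 1 \geq 4 \cdot 4^{\eta} \log(\eta) - 1$. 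Applying the transferred bound $\log(1 - x) \leq -x$ at $x = 4^{-\eta}$ yields $\log \epsilon_{n} \leq -4 \log(\eta) + 4^{-\eta}$, so $\epsilon_{n} \leq \eta^{-4} e^{4^{-\eta}}$ and hence $(K^{+} + K^{-}) \epsilon_{n} \leq M \eta^{-3} e^{4^{-\eta}} \simeq 0$. The final clause of Lemma \ref{nonstandard1} then gives $F(\frac{i}{\eta}, \tau) \simeq \frac{K}{\eta} = C$ uniformly in $i$, which is $F_{\tau} \simeq C$.

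The main technical obstacle I anticipate is this last estimate: one must confirm that the rather tight threshold $\frac{16 (4^{\eta}) \log(\eta)}{\eta}$ is truly enough to push the decay factor $\epsilon_{n}$ below the reciprocal of the worst-case total mass $O(\eta)$ carried by $\pi_{f}^{\pm}$. This is a nonstandard Taylor estimate on $\log(1 - 4^{-\eta})$, and it works only because $4^{-\eta}$ is deeply infinitesimal, so the transferred standard bounds on $\log(1-x)$ near $0$ leave enough room to absorb lower-order error terms once the exponent $\frac{n}{2\eta} - 1$ is a sufficient finite multiple of $4^{\eta} \log \eta$. The remaining pieces -- measurability, the grid-level calculation of the heat equation, and the uniqueness recurrence -- are essentially bookkeeping once the identity $\nu = \eta^{2}/2$ is used correctly.
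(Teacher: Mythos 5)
Your proposal is correct and follows essentially the same route as the paper: verify the grid recurrence equivalence between the nonstandard heat equation and the Markov iteration, use hyperfinite induction for uniqueness, and apply the signed form of Lemma \ref{nonstandard1} for the equilibrium threshold. The small variations you introduce are all sound and, if anything, marginally cleaner. Your measurability argument (each level set is an internal union of grid rectangles) is more explicit than the paper's appeal to the internal defining schema. In the equilibrium estimate, you take $\epsilon = \epsilon_{n}$ directly (so the $(*)$ condition of Lemma \ref{nonstandard1} holds with equality) instead of the paper's fixed choice $\epsilon = 1/\eta^{2}$, and you use $\log(1-x) \leq -x$ where the paper uses $\log(1+x) \geq x/2$; since $-\log(1-4^{-\eta}) = \log(1+\tfrac{1}{4^{\eta}-1})$, these are bounds on the same quantity and yours is the tighter one, which is why your derivation closes to the stated threshold $\frac{16(4^{\eta})\log\eta}{\eta}$ without the small slack the paper absorbs at its final step. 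Your explicit bookkeeping $K^{+}+K^{-} \leq M\eta$ for a finite bound $M$ makes the hypothesis ``$f$ bounded'' do exactly the work it needs to, and is a fair expansion of what the paper compresses into ``$(K^{+}+K^{-})/\eta^{2}\simeq 0$''.
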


\begin{proof}
The first proposition follows by observing that the defining schema for $F$ is internal and by hyperfinite induction, see Lemma \ref{nonstandard1} for the mechanics of this transfer process. For the second proposition, it is a simple computation, using the definition of the partial derivatives in Definition \ref{measures}, to see that, if $F$ satisfies the nonstandard heat equation, then;\\

$F({i\over \eta},{j+1\over \nu})={\eta^{2}\over 4\nu}F({i+2\over\eta},{j\over \nu})+(1-{\eta^{2}\over 2\nu})F({i\over\eta},{j\over \nu})+{\eta^{2}\over 4\nu}F({i-2\over\eta},{j\over\nu})$, $j\in{^{*}\mathcal{Z}_{\geq 0}}$\\

In particular, $F$ is uniquely determined from the initial condition $f$ and taking $\eta^{2}=2\nu$, we obtain that;\\

$F({i\over \eta},{j+1\over \nu})={1\over 2}F({i+2\over\eta},{j\over \nu})+{1\over 2}F({i-2\over\eta},{j\over\nu})$, $j\in{^{*}\mathcal{Z}_{\geq 0}}$\\

which agrees with the defining schema for $F$ in Definition \ref{initial}. For the last claim, by definition of the nonstandard integral, see \cite{dep}, and the assumptions on $f$, we have that $f=f^{+}-f^{-}$, with corresponding sums $\{K^{+},K^{-},K\}$, where ${(K^{+}+K^{-})\over \eta^{2}}\simeq 0$, and $\int_{\overline{\Omega}_{\eta}}fd\mu_{\eta}={K\over \eta}$. By Lemma \ref{nonstandard1}, we have, taking $\epsilon={1\over \eta^{2}}$, that for;\\

$n\geq 2\eta(1-{log(\eta^{2})\over log(4^{\eta-1})-log(4^{\eta})})$\\

$F_{n\over \nu}\simeq \int_{\overline{\Omega}_{\eta}}fd\mu_{\eta}$. Then, we compute;\\

$2\eta(1-{log(\eta^{2})\over log(4^{\eta-1})-log(4^{\eta})})$\\

$\leq 4\eta({log(\eta)\over log(4^{\eta})-log(4^{\eta}-1)})+1$\\

$=4\eta({log(\eta)\over log(1+{1\over 4^{\eta}-1})})+1$\\

$\leq 8\eta(4^{\eta}-1)log(\eta)$ as $log(1+x)\geq {x\over 2}$, for $x\simeq 0$\\

It follows that, for ${n\over \nu}\geq {16\over\eta}(4^{\eta}-1)log(\eta)$, $F_{n\over \nu}\simeq \int_{\overline{\Omega}_{\eta}}fd\mu_{\eta}$, therefore, if $\tau\geq {16\over\eta}(4^{\eta})log(\eta)$, $F_{\tau}\simeq \int_{\overline{\Omega}_{\eta}}fd\mu_{\eta}$, as required.

\end{proof}

We now give an alternative description of the process given in Theorem \ref{measurable}. Namely, we can think of it as the density of a collection of particles , moving independently and at random. For sufficiently large $t$, the density, which we refer to as the equilibrium density, is close to being constant. This idea is made precise in the following.

\begin{defn}
\label{independent}
We keep the notation of Definition \ref{measures}. We let $\nu={\eta^{2}\over 2}$ but we drop the restriction that $\eta$ is odd. We let;\\

$\overline{\Omega}_{\kappa}=\{(s_{i}):1\leq i\leq \kappa, s_{i}=1\ or\ -1\}$\\

so that ${^{*}Card}(\overline{\Omega}_{\kappa})=2^{\kappa}$. We let;\\

$\omega_{i}:\overline{\Omega}_{\kappa}\rightarrow \{1,-1\}$, for $1\leq i\leq \kappa$, be defined by;\\

$\omega_{i}(s)=s_{i}$\\

We let;\\

$\overline{\mathcal{T}_{\nu,\kappa}}=\{t\in\overline{\mathcal{T}_{\nu}}:0\leq [\nu t]\leq \kappa\}$\\

We let $\chi:\overline{\Omega}_{\kappa}\times \overline{\mathcal{T}_{\nu,\kappa}}\rightarrow\overline{\Omega}_{\eta}$, be defined by;\\

$\chi(s,t)={1\over \eta}({^{*}}\sum_{j=1}^{[\nu t]}\omega_{j}(x))$ $mod [0,1)$, $1\leq [\nu t]\leq \kappa$\\

$\chi(s,0)=0$\\

We let $\overline{\chi}:\overline{\Omega_{\eta}}\times \overline{\Omega_{\kappa}}\times\overline{\mathcal{T}_{\nu,\kappa}}\rightarrow\overline{\Omega_{\eta}}$ be defined by;\\

$\overline{\chi}(x,s,t)=x+2\chi(s,t)$ $mod [0,1)$\\

Given an initial condition $f\in V(\overline{\Omega_{\eta}})$, with $f\geq 0$, we let;\\

$N_{f}:\overline{\Omega_{\eta}}\times\overline{\mathcal{T}_{\nu,\kappa}}\rightarrow {^{*}\mathcal{R}_{\geq 0}}$ be defined by;\\

$N_{f}(x,t)={^{*}\sum}_{0\leq i\leq \eta-1}{f({i\over\eta})\over 2^{\kappa}}{^{*}Card}(\{s\in\overline{\Omega}_{\kappa}:\overline{\chi}({i\over\eta},s,t)={[\eta x]\over \eta}\})$\\

\end{defn}

\begin{lemma}
\label{equivalent}
Let $f\in V(\overline{\Omega}_{\eta})$, $f\geq 0$ be an initial condition, for the heat equation in Theorem \ref{measurable} or the Markov chain in Definition \ref{initial}, then $N_{f}$ as given in Definition \ref{independent} is exactly the process $F$ given by Lemma \ref{measurable}.
\end{lemma}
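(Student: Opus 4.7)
The plan is to verify the equality $N_f = F$ at every grid point $(i/\eta,\,j/\nu)$ with $0 \leq i \leq \eta-1$ and $0 \leq j \leq \kappa$, since both functions are extended to $\overline{\Omega_\eta}\times\overline{\mathcal{T}_{\nu,\kappa}}$ by the identical floor convention $(x,t) \mapsto ([\eta x]/\eta,\,[\nu t]/\nu)$ used in Definitions \ref{measures}, \ref{initial} and \ref{independent}.

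The first step is to unpack $N_f(i/\eta,\,j/\nu)$. For a fixed starting state $k/\eta$ and time $j/\nu$, the formula
\[
\overline{\chi}(k/\eta,\,s,\,j/\nu) \;=\; \bigl(k + 2\textstyle\sum_{l=1}^{j}\omega_{l}(s)\bigr)/\eta \ \bmod\ 1
\]
depends only on the coordinates $s_{1},\ldots,s_{j}$, so the remaining $\kappa - j$ coordinates of $s\in\overline{\Omega_\kappa}$ are free. Hence the internal cardinality factorises as $2^{\kappa-j}\cdot a_{j}(k,i)$, where $a_{j}(k,i)$ counts the $\pm 1$-sequences of length $j$ that drive $k$ to $i$ modulo $\eta$ via two-jumps. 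Dividing by $2^{\kappa}$ and summing against $f$ produces
\[
N_f(i/\eta,\,j/\nu) \;=\; \sum_{k=0}^{\eta-1} f(k/\eta)\cdot\frac{a_{j}(k,i)}{2^{j}}.
\]

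The second step is to identify $a_{j}(k,i)/2^{j}$ with the entry $(K^{j})_{k,i}$ of the transition matrix $K$ from Definition \ref{initial}. This follows from the base case $a_{0}(k,i)=\delta_{k,i}$ together with the recurrence
\[
\frac{a_{j+1}(k,i)}{2^{j+1}} \;=\; \tfrac{1}{2}\cdot\frac{a_{j}(k,i-2)}{2^{j}} \,+\, \tfrac{1}{2}\cdot\frac{a_{j}(k,i+2)}{2^{j}},
\]
obtained by splitting on the value of the last coordinate $s_{j+1}$; this matches the matrix product $(K^{j+1})_{k,i} = \tfrac{1}{2}(K^{j})_{k,i-2} + \tfrac{1}{2}(K^{j})_{k,i+2}$. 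Substituting back gives
\[
N_f(i/\eta,\,j/\nu) \;=\; \sum_{k} f(k/\eta)(K^{j})_{k,i} \;=\; (\pi_{f}K^{j})(i) \;=\; F(i/\eta,\,j/\nu),
\]
which is the desired identity on grid points.

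The only technical point is that the induction on $j$ runs up to the hyperfinite bound $\kappa$, so it must be carried out internally. This is handled by the same encoding strategy used for $G$, $H$, $L$ in Lemma \ref{nonstandard1}: one writes the cardinality $a_{j}$, the matrix power $K^{j}$, and the sign-sequence to random-walk correspondence as internal first-order schemata, and transfers the finite combinatorial identity to nonstandard $\eta$ and $j \leq \kappa$. Once this framework is in place the argument reduces to a routine double-counting, and I expect no further obstacles.
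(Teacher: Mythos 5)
Your proof is correct and follows essentially the same strategy as the paper: verify that $N_f$ agrees with $F$ at time $0$ and satisfies the same one-step recurrence, then close by hyperfinite induction. The paper phrases the inductive step as a cardinality identity for $N_f$ itself, namely
$$^{*}\mathrm{Card}\{s : \overline{\chi}(\tfrac{i}{\eta},s,\tfrac{j+1}{\nu})=\tfrac{[\eta x]}{\eta}\} = \tfrac{1}{2}{}^{*}\mathrm{Card}\{s : \overline{\chi}(\tfrac{i}{\eta},s,\tfrac{j}{\nu})=\tfrac{[\eta x]+2}{\eta}\} + \tfrac{1}{2}{}^{*}\mathrm{Card}\{s : \overline{\chi}(\tfrac{i}{\eta},s,\tfrac{j}{\nu})=\tfrac{[\eta x]-2}{\eta}\},$$
while you first factor out the $2^{\kappa-j}$ free coordinates, introduce the path count $a_j(k,i)$, and identify $a_j(k,i)/2^j$ with $(K^j)_{k,i}$. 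These are the same computation in a slightly different order; your version makes the $1/2$ factor in the paper's cardinality identity (which comes from the collapsing of one free coordinate) a bit more transparent, and also makes the role of the matrix power $K^j$ from Definition \ref{initial} explicit rather than implicit. Your remark about needing to formalise the induction internally (by the same encoding device used in Lemma \ref{nonstandard1}) is a fair observation; the paper simply asserts hyperfinite induction without spelling it out, so you are filling in a detail rather than deviating.
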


\begin{proof}
This follows easily by hyperfinite induction. As both $N_{f}$ and $F$ are measurable on $\overline{\Omega}_{\eta}\times\overline{\mathcal{T}_{\nu,\kappa}}$, it is sufficient to check the two claims that;\\

 $N_{f}(x,0)=f(x)$\\

 $N_{f}(x,{j+1\over\nu})={1\over 2}N_{f}(x+{2\over\eta},{j\over\nu})+{1\over 2}N_{f}(x-{2\over\eta},{j\over\nu})$\\

for $0\leq j\leq \kappa-1$, $x\in\overline{\Omega_{\eta}}$. For the first claim, observe that, if $[\eta x]=i$, then $\overline{\chi}({i\over\eta},s,0)={[\eta x]\over\eta}$ for all $s\in\overline{\Omega}_{\kappa}$, and if $[\eta x]\neq i$, then $\overline{\chi}({i\over\eta},s,0)={[\eta x]\over\eta}$ for no $s\in\overline{\Omega}_{\kappa}$, by definition of $\overline{\chi}$. Hence, a simple computation of $N_{f}(x,0)$ gives the result. For the second claim, just observe that, for $0\leq i\leq \eta-1$, $0\leq j\leq \kappa-1$ ;\\

${^{*}Card(s\in\overline{\Omega_{\kappa}}:\overline{\chi}({i\over\eta},s,{j+1\over\nu})={[\eta x]\over \eta})}$\\

$={1\over 2}{^{*}Card(s\in\overline{\Omega_{\kappa}}:\overline{\chi}({i\over\eta},s,{j\over\nu})={[\eta x]+2\over \eta})}+{1\over 2}{^{*}Card(s\in\overline{\Omega_{\kappa}}:\overline{\chi}({i\over\eta},s,{j\over\nu})={[\eta x]-2\over \eta})}$\\

The second claim then follows by linearity and the definition of $N_{f}$.

\end{proof}
\begin{defn}
\label{reverse}
Let $(\overline{\Omega_{\eta}},\mathcal{E}_{\eta},\gamma_{\eta})$ be a nonstandard $*$-finite measure space. We define a reverse filtration on $\overline{\Omega_{\eta}}$ to be an internal collection of $*\sigma$-algebras $\mathcal{E}_{\eta,i}$, indexed by $0\leq i\leq\kappa$, $\kappa\in {{^{*}\mathcal{N}}\setminus \mathcal{N}}$, such that;\\

(i). $\mathcal{E}_{\eta,0}=\mathcal{E}_{\eta}$\\

(ii). $\mathcal{E}_{\eta,i}\subseteq \mathcal{E}_{\eta,j}$, if $0\leq j\leq i\leq \kappa$.\\

We say that $\overline{F}:\overline{\Omega_{\eta}}\times\overline{\mathcal{T}_{\nu,\kappa}}\rightarrow {^{*}\mathcal{R}}$ is adapted to the filtration if $\overline{F}$ is measurable with respect to $\mathcal{E}_{\eta}\times \mathcal{D}_{\nu}$ and $\overline{F}_{i\over\nu}:\overline{\Omega_{\eta}}\rightarrow{^{*}\mathcal{R}}$ is measurable with respect to $\mathcal{E}_{\eta,i}$, for $0\leq i\leq \kappa$.\\

If $f:\overline{\Omega_{\eta}}\rightarrow{^{*}\mathcal{R}}$ is measurable with respect to $\mathcal{E}_{\eta,j}$ and $0\leq j\leq i\leq \kappa$, we define the conditional expectation $E_{\eta}(f|\mathcal{E}_{\eta,i})$ to be the unique $g:\overline{\Omega_{\eta}}\rightarrow{^{*}\mathcal{R}}$ such that $g$ is measurable with respect to $\mathcal{E}_{\eta,i}$ and;\\

$\int_{U}gd\gamma_{\eta}=\int_{U}f d\gamma_{\eta}$\\

for all $U\in \mathcal{E}_{\eta,i}$. We say that  $\overline{F}:\overline{\Omega_{\eta}}\times\overline{\mathcal{T}_{\nu,\kappa}}\rightarrow {^{*}\mathcal{R}}$ is a reverse martingale if;\\

(i). $\overline{F}$ is adapted to the reverse filtration on $\overline{\Omega_{\eta}}$\\

(ii). $E_{\eta}(\overline{F}_{j\over\nu}|\mathcal{E}_{\eta,i})=\overline{F}_{i\over\nu}$ for $0\leq j\leq i\leq \kappa$\\

\end{defn}

\begin{theorem}
\label{martingale}
Let $F$ be as in Definition \ref{initial}, without the restriction that $\eta$ is odd, but keeping $\nu={\eta^{2}\over 2}$, and let $F_{\kappa}$ be its restriction to $\overline{\Omega}_{\eta}\times \overline{\mathcal{T}_{\nu,\kappa}}$. Then there exists a reverse filtration on $\overline{\Omega_{\eta}}$ and $\overline{F}_{\kappa}$ such that $\overline{F}_{\kappa}$ is a reverse martingale, and $\overline{F}_{\kappa,{\kappa\over\nu}}=F_{{\kappa\over \nu}}$

\end{theorem}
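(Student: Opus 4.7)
The plan is to build the reverse martingale by applying the Doob / Feynman-Kac construction to the random-walk picture of Lemma \ref{equivalent}, and then reversing time. The key preliminary observation is that the transition kernel $K$ from Definition \ref{initial} is symmetric, $K_{ij}=K_{ji}$, so that
\[
F({i/\eta},\,{j/\nu}) \;=\; \sum_{k} f(k/\eta)\,K^{j}(k,i) \;=\; \sum_{k} f(k/\eta)\,K^{j}(i,k) \;=\; E^{i/\eta}\!\left[f(X_{j})\right],
\]
where $(X_{j})_{0\leq j\leq \kappa}$ is the internal $*$-finite random walk on $\{0,\ldots,\eta-1\}$ driven by the $\pm 1$ increments of Definition \ref{independent}. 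This Feynman-Kac style representation of $F$ is what powers the whole argument.

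First I would take the $*$-finite measure space in the sense of Definition \ref{reverse} to be the product $\overline{\Omega}_\eta \times \overline{\Omega}_\kappa$, equipped with the product of $\mu_\eta$ and the uniform counting measure on $\overline{\Omega}_\kappa$; the random walk $X_{j}(x,s) := \overline{\chi}(x,s, j/\nu)$ is already available on it, and this is the natural space carrying both the starting state and the paths. I then define the reverse filtration by letting $\mathcal{E}_{\eta,i}$ be the internal $*\sigma$-algebra generated by the coordinates $X_{0}, X_{1}, \ldots, X_{\kappa - i}$, for $0\leq i\leq\kappa$. By construction this family is internal and decreasing in $i$, with $\mathcal{E}_{\eta,0}$ the full path $*\sigma$-algebra and $\mathcal{E}_{\eta,\kappa}$ generated by the starting coordinate alone.

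Next, I set $\overline{F}_{\kappa,\, i/\nu}(\omega) := F\!\left(X_{\kappa - i}(\omega),\, i/\nu\right)$. This is $\mathcal{E}_{\eta,i}$-measurable since it depends only on $X_{\kappa-i}$, and at $i = \kappa$ we have $\overline{F}_{\kappa,\,\kappa/\nu}(x,s) = F(x, \kappa/\nu)$, independent of $s$, which matches $F_{\kappa/\nu}$ as required. For the reverse martingale identity, given $j \leq i \leq \kappa$, the internal Markov property says that conditionally on $X_{\kappa-i}=y$ the position $X_{\kappa-j}$ is distributed as the walk from $y$ after $i-j$ further steps, and then the Feynman-Kac identity together with the tower property yields
\[
E\!\left[F(X_{\kappa-j}, j/\nu)\,\big|\,X_{\kappa-i}=y\right] \;=\; E^{y}\!\left[F(X_{i-j}, j/\nu)\right] \;=\; E^{y}\!\left[f(X_{i})\right] \;=\; F(y, i/\nu),
\]
so that $E(\overline{F}_{\kappa,\, j/\nu} \mid \mathcal{E}_{\eta,i}) = F(X_{\kappa-i}, i/\nu) = \overline{F}_{\kappa,\, i/\nu}$, which is the reverse martingale identity.

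The main obstacle I expect is not probabilistic content but nonstandard bookkeeping: one must verify that the $\mathcal{E}_{\eta,i}$ are genuine internal $*\sigma$-algebras over a hyperfinite index $\kappa$, that Definition \ref{reverse}'s conditional expectation behaves on them as on a classical finite probability space (existence and uniqueness from an internal Radon--Nikodym / hyperfinite averaging argument), and that the notational reuse of $\overline{\Omega}_\eta$ (the state space of Definition \ref{measures} versus the path space used here) is resolved consistently. Since the walk is $*$-finite and the Markov property is a first-order internal statement about iterated products of the transition matrix, transfer handles this essentially automatically.
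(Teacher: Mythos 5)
Your construction captures the same essential idea as the paper: the reverse filtration is generated by the internal random walk read forward in the coin-flip index while the heat semigroup runs backward, and $\overline F_{\kappa,i/\nu}$ is $F_{i/\nu}$ evaluated at the walker's position after $\kappa-i$ steps. Two differences between your route and the paper's are worth flagging, one of which is a genuine gap.

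First, the paper works literally on $\overline{\Omega}_\eta = [0,1)$, as Definition \ref{reverse} and the statement of the theorem require: it refines the hyperfinite partition to $2^{\kappa}\eta$ dyadic subintervals, takes $\gamma_\eta$ to be the counting measure there, and introduces explicit internal bijections $\Phi_i: \{0,\ldots,\eta-1\}\times\overline{\Omega}_{\kappa-i}\to\{0,\ldots,2^{\kappa-i}\eta-1\}$ that encode a state-plus-coinflip pair into a dyadic subinterval at level $i$. Your proposal sets the stage on the product $\overline{\Omega}_\eta\times\overline{\Omega}_\kappa$, which is measure-isomorphic to the paper's refined space but is not $\overline{\Omega}_\eta$. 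You dismiss this as a notational issue to be "resolved consistently," but the resolution \emph{is} the $\Phi_i$-coding that the paper constructs; as written your filtration does not live on $\overline{\Omega}_\eta$, so this is a substantive missing step, not bookkeeping.

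Second, your verification of the martingale identity goes through the symmetry $K_{ij}=K_{ji}$ to obtain $F(i/\eta,j/\nu)=E^{i/\eta}[f(X_j)]$, then invokes the Markov property and Chapman--Kolmogorov. The paper's verification is more elementary: it computes $E_\eta(\overline F_{\kappa,i/\nu}\mid\mathcal{E}_{\eta,i+1})$ directly by averaging over the two dyadic children of an interval at level $i+1$, reads off $\tfrac12 F_{i/\nu}(x-2/\eta)+\tfrac12 F_{i/\nu}(x+2/\eta)$, recognizes the one-step recursion from Theorem \ref{measurable}, and then applies the tower law. That avoids invoking the symmetry of $K$ (true here but not needed) and the Feynman--Kac representation. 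Your probabilistic argument is correct and arguably more conceptual, but the direct computation is shorter and self-contained within the recursion already established.
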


\begin{proof}
We define the reverse filtration, by setting $\mathcal{E}_{\eta,i}$ to be internal unions of the intervals $[{j\over 2^{\kappa-i}\eta},{j+1\over 2^{\kappa-i}\eta})$ for $0\leq j\leq 2^{\kappa-i}\eta-1$, $0\leq i\leq \kappa$. Clearly, this is an internal collection. It follows that $\mathcal{E}_{\eta}=\mathcal{E}_{\eta,0}$ consists of internal unions of the intervals $[{j\over 2^{\kappa}\eta},{j+1\over 2^{\kappa}\eta})$ for $0\leq j\leq 2^{\kappa}\eta-1$, and we define the corresponding measure $\gamma_{\eta}$ by setting $\gamma_{\eta}([{j\over 2^{\kappa}\eta},{j+1\over 2^{\kappa}\eta}))={1\over 2^{\kappa}\eta}$. Observe that $\mathcal{E}_{\eta,\kappa}=\mathcal{C}_{\eta}$, the original $*\sigma$-algebra.\\

We define bijections $\Phi_{i}:{^{*}\mathcal{N}_{0\leq j\leq \eta-1}}\times \overline{\Omega}_{\kappa-i}\rightarrow {^{*}\mathcal{N}_{0\leq j\leq 2^{\kappa-i}\eta-1}}$, for $0\leq i\leq \kappa$, where ${\overline{\Omega}_{\kappa-i}}=\{(\omega_{k}):\omega_{k}=1\ or -1, 1\leq k\leq \kappa-i\}$, by;\\

$\Phi_{i}(j,\omega)=2^{\kappa-i}j+ 2^{\kappa-i}{^{*}\sum_{1\leq k\leq \kappa-i}}{\omega_{k}+1\over 2^{k+1}}$\\

Define $\overline{F}_{\kappa}$ by;\\

$\overline{F}_{\kappa}({r\over 2^{\kappa-i}\eta},{i\over\nu})=F_{i\over\nu}({j\over\eta}+{2\over\eta}{^{*}\sum}_{1\leq k\leq \kappa-i}\omega_{k})$\\

where $\Phi_{i}(j,\omega)=r$, for $0\leq r\leq 2^{\kappa-i}\eta-1$, $0\leq i\leq \kappa$.\\

$\overline{F}_{\kappa}(x,t)=\overline{F}_{\kappa}({[2^{\kappa-[\nu t]}\eta x]\over 2^{\kappa-[\nu t]}\eta},{[\nu t]\over \nu})$, $(x,t)\in\overline{\Omega}_{\eta}\times \overline{\mathcal{T}_{\nu,\kappa}}$\\

It is clear that $\overline{F}_{\kappa}$ is adapted to the reverse filtration on $\overline{\Omega}_{\eta}$. Moreover, it is straightforward to see that;\\

$\overline{F}_{\kappa}({r\over\eta},{\kappa\over \nu})=F_{\kappa\over\nu}({r\over\eta})$\\

as $\Phi_{\kappa}(r)=r$, so $\overline{F}_{\kappa,{\kappa\over \nu}}=F_{\kappa\over \nu}$. We claim that $\overline{F}_{\kappa}$ is a reverse martingale. We have verified condition $(i)$ in Definition \ref{reverse}. To verify $(ii)$, by the tower law for conditional expectation, it is sufficient to prove that $E_{\eta}(\overline{F}_{\kappa,{i\over\nu}}|\mathcal{E}_{i+1})=\overline{F}_{\kappa,{i+1\over\nu}}$, for $0\leq i\leq \kappa-1$. We have that;\\

$E_{\eta}(\overline{F}_{\kappa,{i\over\nu}}|\mathcal{E}_{i+1})({r\over 2^{\kappa-i-1}\eta})$\\

$=2^{\kappa-i-1}\eta\int_{[{r\over 2^{\kappa-i-1}\eta},{r+1\over 2^{\kappa-i-1}\eta})}E_{\eta}(\overline{F}_{\kappa,{i\over\nu}}|\mathcal{E}_{i+1})d\gamma_{\eta}$\\

$=2^{\kappa-i-1}\eta\int_{[{r\over 2^{\kappa-i-1}\eta},{r+1\over 2^{\kappa-i-1}\eta})}\overline{F}_{\kappa,{i\over\nu}}d\gamma_{\eta}$\\

$={2^{\kappa-i-1}\eta\over 2^{\kappa-i}\eta}(\sum_{m=0}^{1}\overline{F}_{\kappa,{i\over\nu}}({2r+m\over 2^{\kappa-i}\eta}))$\\

$={1\over 2}(F_{i\over\nu}({j\over\eta}+{2\over\eta}({^{*}\sum_{1\leq k\leq \kappa-i-1}}\omega_{k}-1))+F_{i\over\eta}({j\over\eta}+{2\over\eta}({^{*}\sum_{1\leq k\leq \kappa-i-1}}\omega_{k}+1)))$\\

$={1\over 2}(F_{i\over\nu}(x-{2\over \eta})+F_{i\over\nu}(x+{2\over\eta}))$\\

$=F_{i+1\over\nu}(x)=\overline{F}_{\kappa,{i+1\over\nu}}({r\over 2^{\kappa-i-1}\eta})$\\

where $\Phi_{i+1}(j,\omega)=r$, $\omega=(\omega_{k})_{1\leq k\leq \kappa-i-1}$ and $x={j\over\eta}+{2\over\eta}({^{*}\sum}_{1\leq k\leq \kappa-i-1}\omega_{k})$, as required.\\

\end{proof}

We now require a lemma about the rate of convergence in the Central Limit Theorem for a particular class of random variable.\\

\begin{lemma}
\label{centrallimit}
Let $\kappa$ be odd and infinite, and let $\{X_{i}:1\leq i\leq \kappa\}$ be discrete identically distributed random variables $X_{i}:\Omega_{\kappa}\rightarrow{^{*}\mathcal{R}}$ which are $*$-independent with respect to the probability measure $P=\mu_{\kappa}$, and take the value $\sqrt{2t}$ with probability ${1\over 2}$ and $-\sqrt{2t}$ with probability ${1\over 2}$, where $t>0$. Then if $T_{\kappa}={{^{*}\sum}_{1\leq i\leq \kappa}X_{i}\over \sqrt{\kappa}}$, there exists a finite constant $L$, such that;\\

$|P(T_{\kappa}={\sqrt{2t}j\over\sqrt{\kappa}})-{\sqrt{2}\over \sqrt{\pi\kappa}}{^{*}exp}({-j^{2}\over 2\kappa})|\leq L\kappa^{{-3\over 2}}$\\

for $j$ odd and $-\kappa\leq j\leq \kappa$.
\end{lemma}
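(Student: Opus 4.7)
The plan is to recognize the estimate as a quantitative local central limit theorem for the simple symmetric random walk and to derive the bound by applying a sharp form of Stirling's formula. Write $s_{i}=X_{i}/\sqrt{2t}\in\{\pm 1\}$ and $S_{\kappa}={^{*}\sum}_{i=1}^{\kappa}s_{i}$, so that $T_{\kappa}=\sqrt{2t}\,S_{\kappa}/\sqrt{\kappa}$ and $P(T_{\kappa}=\sqrt{2t}j/\sqrt{\kappa})=P(S_{\kappa}=j)$. Since $\kappa$ is odd, $S_{\kappa}$ takes only odd values, and for odd $j$ with $k=(\kappa+j)/2\in\{0,1,\ldots,\kappa\}$ we have $P(S_{\kappa}=j)=\binom{\kappa}{k}/2^{\kappa}$. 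The parameter $t$ thereby drops out of both sides and is absorbed into $L$, leaving a pure binomial estimate.

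Next, I would apply the explicit Stirling expansion $n!=\sqrt{2\pi n}(n/e)^{n}\exp(\theta_{n}/(12n))$, $0<\theta_{n}<1$, which transfers to the nonstandard setting, to each of $\kappa!$, $k!$, and $(\kappa-k)!$. Setting $x=j/\kappa$ so that $k/\kappa=(1+x)/2$ and $(\kappa-k)/\kappa=(1-x)/2$, a direct algebraic manipulation gives
\[
\binom{\kappa}{k}/2^{\kappa}=\sqrt{\frac{2}{\pi\kappa(1-x^{2})}}\exp\bigl(-\kappa H(x)\bigr)R(j,\kappa),
\]
where $H(x)=\tfrac{1+x}{2}\log(1+x)+\tfrac{1-x}{2}\log(1-x)$ and $R(j,\kappa)=\exp\!\bigl(O(1/\min(k,\kappa-k))\bigr)$ is the combined Stirling correction. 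Expanding $H(x)=x^{2}/2+x^{4}/12+O(x^{6})$ and $(1-x^{2})^{-1/2}=1+O(x^{2})$ and substituting $x=j/\kappa$ yields
\[
\binom{\kappa}{k}/2^{\kappa}=\frac{\sqrt{2}}{\sqrt{\pi\kappa}}\exp(-j^{2}/(2\kappa))\bigl(1+E(j,\kappa)\bigr),
\]
with $|E(j,\kappa)|\leq C\bigl(1/\kappa+j^{2}/\kappa^{2}+j^{4}/\kappa^{3}\bigr)$, valid whenever $\min(k,\kappa-k)$ is comparable to $\kappa$.

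The final step is to split into regimes and assemble the uniform absolute bound. In the central regime $|j|\leq\kappa^{3/4}$ the main contribution to $|E(j,\kappa)|$ is at worst $O(1/\kappa)+O(j^{4}/\kappa^{3})$; writing $u=j^{2}/\kappa$, the resulting absolute error is bounded by $\kappa^{-1/2}\exp(-u/2)(O(1/\kappa)+O(u^{2}/\kappa))$, and since both $\exp(-u/2)$ and $u^{2}\exp(-u/2)$ are bounded for $u\geq 0$ this gives $O(\kappa^{-3/2})$. In the tail regime $|j|>\kappa^{3/4}$, Hoeffding's inequality gives $P(S_{\kappa}=j)\leq 2\exp(-j^{2}/(2\kappa))\leq 2\exp(-\sqrt{\kappa}/2)$, and the Gaussian side is bounded by the same exponential times $\sqrt{2/(\pi\kappa)}$, both of which are infinitesimal of every polynomial order and therefore vastly below $\kappa^{-3/2}$. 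The main obstacle is ensuring that these two regimes dovetail so as to yield a single finite absolute constant $L$: the Stirling correction $R(j,\kappa)$ only behaves nicely while $\min(k,\kappa-k)$ is comparable to $\kappa$, so the threshold $\kappa^{3/4}$ (or any sub-$\kappa$ choice) must be made with care, and the Hoeffding tail bound then provides enough slack to absorb everything outside the central regime into the $\kappa^{-3/2}$ envelope.
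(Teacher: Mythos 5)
Your plan is sound and gives the right bound, but it proceeds by a genuinely different route from the paper. The paper's proof is Fourier-analytic: it writes $P(S_{n}=j/\sqrt{n})$ as a trigonometric integral using the orthogonality relation $\tfrac{2}{\pi}\int_{-\pi/2}^{\pi/2}\cos(tx)\cos(sx)\,dx=\delta_{ts}$ (an inversion formula on the dual group), then compares the characteristic function $\phi_{S_n}(x)=e^{n\log(1-h_{n}(x))}$ to $e^{-x^{2}/2}$ by a careful term-by-term expansion, establishes $\alpha_{n}\leq\beta_{n}$ via the inequality $\cos x\leq e^{-x^{2}/2}$, and controls the resulting integral in pieces on $|x|\leq\pi\sqrt{n}/3$, $\pi\sqrt{n}/3\leq|x|<\pi\sqrt{n}/2$, and $|x|>\pi\sqrt{n}/2$; the Gaussian emerges at the very end by evaluating the Fourier transform of $e^{-x^{2}/2}$. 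Your argument instead attacks the binomial coefficient $\binom{\kappa}{k}/2^{\kappa}$ directly with a sharp Stirling expansion and the entropy function $H(x)$, then trades a multiplicative error for an absolute one by exploiting boundedness of $u^{m}e^{-u/2}$ on $[0,\infty)$, with a Hoeffding cutoff at $|j|\approx\kappa^{3/4}$ to handle the region where $\min(k,\kappa-k)$ is no longer comparable to $\kappa$. What the paper's approach buys is a template that extends to non-Bernoulli increments, since it never touches the combinatorics of the walk, only moments of the characteristic function; what yours buys is concreteness and elementarity (no contour of integration, no Fourier inversion on the dual group), at the cost of being hardwired to the $\pm1$ case. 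Both routes must be carried out for finite $n$ and transferred; you leave that step implicit, but it is the same move the paper makes explicitly at the end. Two small points worth tightening in a full write-up: (a) the Stirling remainder should be recorded as an explicit two-sided bound (e.g.\ $1/(12n+1)<\theta_{n}/(12n)<1/(12n)$) so that the correction $R(j,\kappa)=1+O(1/\kappa)$ has a uniform constant on the central regime; and (b) when you pass from the multiplicative error $E(j,\kappa)$ to the additive bound, you should state explicitly that the higher-order terms $\kappa\bigl(H(x)-x^{2}/2\bigr)$ remain uniformly bounded on $|j|\leq\kappa^{3/4}$ so that $\exp(\cdot)-1$ is comparable to its argument; this is where the specific cutoff exponent $3/4$ is actually doing work, and it is the point most likely to be queried.
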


\begin{proof}
This result will be obtained by transfer from the finite case. We take $n$ odd and finite, and consider the iid random variables $\{X_{1},\ldots,X_{n}\}$ on $\Omega_{n}$, where $X_{i}$ takes the value $1$ with probability ${1\over 2}$ and $-1$ with probability ${1\over 2}$. We have that $E(X_{i})=0$, $E(X_{i}^{2})=1$. Let $S_{n}={X_{1}+\ldots+X_{n}\over\sqrt{n}}$. We also have that;\\

$\phi_{S_{n}}(x)=E(e^{ixS_{n}})$\\

$=\sum_{-n\leq j odd\leq n}P(S_{n}={j\over\sqrt{n}})e^{{ixj\over\sqrt{n}}}$\\

$=2\sum_{0\leq j odd\leq n}P(S_{n}={j\over\sqrt{n}})cos({xj\over\sqrt{n}})$ $(*)$\\

As is easily checked, we have the identity;\\

${2\over\pi}\int_{-{\pi\over 2}}^{{\pi\over 2}}cos(tx)cos(sx)dx=\delta_{ts}$, for $t,s$ odd integers.\\

Making the substitution $x={y\over\sqrt{n}}$, we obtain;\\

${2\over \pi\sqrt{n}}\int_{{-\pi\sqrt{n}\over 2}}^{{\pi\sqrt{n}\over 2}}cos({ty\over\sqrt{n}})cos({sy\over\sqrt{n}})dy=\delta_{ts}$, for $t,s$ odd integers.\\

Combining this with $(*)$, we obtain that;\\

$P(S_{n}={j\over\sqrt{n}})={1\over \pi\sqrt{n}}\int_{{-\pi\sqrt{n}\over 2}}^{{\pi\sqrt{n}\over 2}}cos({jx\over\sqrt{n}})\phi_{S_{n}}(x)dx$ for $j$ an odd integer.\\

Now, by independence, and a simple calculation of the moments of $X$, we have that;\\

$\phi_{S_{n}}(x)=\phi_{X}({x\over\sqrt{n}})^{n}$\\

$=E(e^{{ixX\over \sqrt{n}}})^{n}$\\

$=e^{nlog(1-h_{n}(x))}$\\

where;\\

$h_{n}(x)={x^{2}\over 2!n}-{x^{4}\over 4!n^{2}}+\ldots$\\

It is easy to check that $h_{n}(x)=1-cos({x\over\sqrt{n}})$. In particular, in the range $|x|< {\pi\sqrt{n}\over 2}$, we have that $0\leq h_{n}(x)<1$. It follows that, for $|x|<{\pi\sqrt{n}\over 2}$, using the power series expansion of $log(1-h_{n}(x))$, that;\\

$\phi_{S_{n}}(x)-e^{{-x^{2}\over 2}}=e^{{-x^{2}\over 2}}(e^{\alpha_{n}(x)-\beta_{n}(x)}-1)$\\

where;\\

$\alpha_{n}(x)={x^{4}\over 4!n}-{x^{6}\over 6!n^{2}}+\ldots$\\

$\beta_{n}(x)={nh_{n}(x)^{2}\over 2}+{nh_{n}(x)^{3}\over 3}+\ldots$\\

We claim that $\alpha_{n}(x)\leq \beta_{n}(x)$, for $|x|<{\pi\sqrt{n}\over 2}$, $(**)$. In order to see this, let $h(x)=log(e^{{x^{2}\over 2}}cos(x))$, then $h'(x)=x-tan(x)$, $h''(x)=-tan^{2}(x)$, so $h'(x)=x-tan(x)\leq 0$, for $x\in [0,{\pi\over 2})$, and, therefore, $h(x)\leq h(0)=0$, for $x\in [0,{\pi\over 2})$. It follows that $cos(x)\leq e^{{-x^{2}\over 2}}$, for $|x|<{\pi\over 2}$. By substitution, we have that $cos({x\over \sqrt{n}})\leq e^{{-x^{2}\over 2n}}$, for $|x|<{\pi\sqrt{n}\over 2}$. Then, $1-h_{n}(x)\leq e^{{-x^{2}\over 2n}}$, so $nlog(1-h_{n}(x))+{x^{2}\over 2}\leq 0$. Using the fact that $nlog(1-h_{n}(x))=\alpha_{n}(x)-\beta_{n}(x)-{x^{2}\over 2}$, we obtain the result $(**)$ as required.\\

Now using the fact that $\beta_{n}(x)\leq {nh_{n}^{2}\over 1-h_{n}}$, the identity $1+h\leq e^{h}$, for $h\leq 0$, and $h_{n}(x)\leq {x^{2}\over 2n}$, $\alpha_{n}(x)\leq {x^{4}\over 4!n}$, for $|x|<{\pi\sqrt{n}\over 2}$, we obtain, for $|x|<{\pi\sqrt{n}\over 2}$, that;\\

$|e^{-x^{2}\over 2}(e^{\alpha_{n}(x)-\beta_{n}(x)}-1)|$\\

$\leq |e^{-x^{2}\over 2}(e^{\alpha_{n}(x)-{nh_{n}^{2}(x)\over 1-h_{n}(x)}}-1)|$\\

$\leq |e^{-x^{2}\over 2}(\alpha_{n}(x)-{nh_{n}^{2}(x)\over 1-h_{n}(x)})|$\\

$\leq e^{-x^{2}\over 2}({x^{4}\over 4!n}+{x^{4}\over 4ncos({x\over\sqrt{n}})})$\\

We have that;\\

$\int_{-\pi\sqrt{n}\over 3}^{\pi\sqrt{n}\over 3}e^{-x^{2}\over 2}{x^{4}\over 4ncos({x\over\sqrt{n}})}dx$\\

$\leq \int_{-\pi\sqrt{n}\over 3}^{\pi\sqrt{n}\over 3}{e^{-x^{2}\over 2}x^{4}\over 2n}dx$\\

$={1\over 2n}(-({2\pi^{3}n^{3\over 2}\over 9}+2\pi\sqrt{n})e^{{-\pi^{2}n\over 18}}+3\int_{-\pi\sqrt{n}\over 3}^{\pi\sqrt{n}\over 3}e^{-x^{2}\over 2}dx)$ (integrating by parts)\\

$\leq {1\over 2n}(-({2\pi^{3}n^{3\over 2}\over 9}+2\pi\sqrt{n})e^{{-\pi^{2}n\over 18}}+3\sqrt{2\pi})$\\

$\leq {C\over n}$ $(\dag)$\\

 for $n$ sufficiently large and $C\geq 0$ a finite constant. Now for $|j|\leq n$, $j$ odd, using L'Hopital's rule, we have that;\\

$|lim_{x\rightarrow {\pi\sqrt{n}\over 2}}{|cos({jx\over\sqrt{n}})|\over cos({x\over\sqrt{n}})}|=|j|\leq n$\\

It follows that;\\

$\int_{{\pi\sqrt{n}\over 3}\leq |x|<{\pi\sqrt{n}\over 2}}e^{-x^{2}\over 2}{|cos({jx\over\sqrt{n}})|x^{4}\over 4ncos({x\over\sqrt{n}})}dx$\\

$\leq \int_{{\pi\sqrt{n}\over 3}\leq |x|<{\pi\sqrt{n}\over 2}}{ne^{-x^{2}\over 2}x^{4}\over 4n}dx$\\

$\leq {2\pi\sqrt{n}\over 6}{\pi^{4}n^{2}\over 324}e^{-\pi^{2}n\over 9}$\\

$={\pi^{5}n^{5\over 2}e^{{-\pi^{2}n\over 9}}\over 972}\leq {D\over n}$ $(\dag\dag)$\\

for $n$ sufficiently large, and $D\geq 0$ a finite constant. Combining $(\dag)$ and $(\dag\dag)$, we obtain that;\\

$\int_{{-\pi\sqrt{n}\over 2}}^{{\pi\sqrt{n}\over 2}}e^{-x^{2}\over 2}{|cos({jx\over\sqrt{n}})|x^{4}\over 4ncos({x\over\sqrt{n}})}dx\leq {E\over n}$\\

where $E=C+D$. In the same way, we can find a finite constant $F\geq 0$, for which;\\

$\int_{{-\pi\sqrt{n}\over 2}}^{{\pi\sqrt{n}\over 2}}e^{-x^{2}\over 2}|cos({jx\over\sqrt{n}})|({x^{4}\over 4!n}+{x^{4}\over 4ncos({x\over\sqrt{n}})})dx\leq {F\over n}$\\

Combining these inequalities, we obtain;\\

${1\over \pi\sqrt{n}}\int_{{-\pi\sqrt{n}\over 2}}^{{\pi\sqrt{n}\over 2}}|\phi_{S_{n}(x)}cos({jx\over\sqrt{n}})-e^{-x^{2}\over 2}cos({jx\over\sqrt{n}})|dx$\\

$\leq {F\over \pi}n^{-3\over 2}$\\

$=Gn^{-3\over 2}$\\

for $n$ sufficiently large and $G={F\over\pi}$ a finite constant. We now have that;\\

$|P(S_{n}={j\over\sqrt{n}})-{1\over \pi\sqrt{n}}\int_{{-\pi\sqrt{n}\over 2}}^{{\pi\sqrt{n}\over 2}}e^{-x^{2}\over 2}cos({jx\over\sqrt{n}})dx|\leq Gn^{-3\over 2}$\\

Therefore;\\

$|P(S_{n}={j\over\sqrt{n}})-{1\over \pi\sqrt{n}}\int_{-\infty}^{\infty}e^{-x^{2}\over 2}cos({jx\over\sqrt{n}})dx|$\\

$\leq Gn^{-3\over 2}+|{1\over \pi\sqrt{n}}\int_{|x|>{\pi\sqrt{n}\over 2}}e^{-x^{2}\over 2}cos({jx\over\sqrt{n}})dx|$\\

$\leq Gn^{-3\over 2}+{1\over \pi\sqrt{n}}\int_{|x|>{\pi\sqrt{n}\over 2}}e^{-x^{2}\over 2}dx$\\

We have that $e^{-x^{2}\over 2}\leq H|x|^{-3}$, for $|x|>{\pi\sqrt{n}\over 2}$, $n$ sufficiently large and $H\geq 0$ a finite constant. Using this inequality, and performing the integration, gives;

$|P(S_{n}={j\over\sqrt{n}})-{1\over \pi\sqrt{n}}\int_{-\infty}^{\infty}e^{-x^{2}\over 2}cos({jx\over\sqrt{n}})dx|$\\

$\leq Gn^{-3\over 2}+{2H\over \pi\sqrt{n}}{2^{2}\over 2\pi^{2}n}$\\

$=Gn^{-3\over 2}+Kn^{-3\over 4}=Ln^{-3\over 2}$\\

where $K={4H\over \pi^{3}}$ and $L=G+K$. Now we have that;\\

${1\over \pi\sqrt{n}}\int_{-\infty}^{\infty}e^{-x^{2}\over 2}cos({jx\over\sqrt{n}})dx$\\

$={1\over \pi\sqrt{n}}\mathcal{F}(e^{-x^{2}\over 2})|_{{j\over\sqrt{n}}}$\\

$={1\over \pi\sqrt{n}}\sqrt{2\pi}e^{-y^{2}\over 2}|_{{j\over\sqrt{n}}}$\\

$={\sqrt{2}\over \sqrt{\pi n}}e^{-j^{2}\over 2n}$\\

where $\mathcal{F}$ denotes the Fourier transform. So we obtain that;\\

$|P(S_{n}={j\over\sqrt{n}})-{\sqrt{2}\over \sqrt{\pi n}}e^{-j^{2}\over 2n}|\leq Ln^{-3\over 2}$ $(\dag\dag\dag)$\\

Consider now the case, when the original $\{X_{i}:1\leq i\leq n\}$ on $\Omega_{n}$ take the values $\sqrt{2t}$ with probability ${1\over 2}$ and $-\sqrt{2t}$ with probability ${1\over 2}$, where $t\in\mathcal{R}_{>0}$. Then $\{{X_{i}\over\sqrt{2t}}:1\leq i\leq n\}$ are as in the above proof, so we can apply the result $(\dag\dag\dag\dag)$, to obtain that;\\

$|P(T_{n}={\sqrt{2t}j\over\sqrt{n}})-{\sqrt{2}\over \sqrt{\pi n}}e^{-j^{2}\over 2n}|\leq Ln^{-3\over 2}$\\

where $T_{n}={X_{1}+\ldots X_{n}\over \sqrt{n}}$. Now we can transfer this result from $\Omega_{n}$, with the measure $\mu_{n}$, for finite $n$, to the case when $\kappa$ is infinite, and obtain the result of the lemma.

\end{proof}

We now obtain an explicit solution for the nonstandard heat equation;\\

\begin{theorem}
\label{solution}
Let assumptions be as in Theorem \ref{martingale}, with $F_{0}$ S-continuous, then for $\kappa$ odd and ${^{\circ}{\kappa\over\nu}}\neq 0$, we have that;\\

$F_{{\kappa\over\nu}}({j\over\eta})={1\over\eta'}{^{*}\sum_{i'\in I}}F_{0,p}({j\over\eta}+{i'\over\eta'}){1\over 2\sqrt{\pi t}}{^{*}exp}({-({i'\over\eta'})^{2}\over 4t})$\\

for $0\leq j\leq \eta-1$, where $\eta'={\eta\over 4}$, $I'={^{*}\mathcal{Z}}\cap [-m,m]$, for some infinite integer $m\in{^{*}\mathcal{Z}}_{\geq 0}$, and $F_{0,p}$ is the periodic extension of $F_{0}$ to ${^{*}\mathcal{R}}$.

\end{theorem}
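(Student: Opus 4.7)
The plan is to use the reverse martingale representation from Theorem \ref{martingale} to write $F_{\kappa/\nu}(j/\eta)$ as a random-walk expectation of $F_{0,p}$, and then substitute the local central limit estimate of Lemma \ref{centrallimit} to convert the binomial weights into the Gaussian weights on the coarser lattice of spacing $1/\eta'=4/\eta$. Conditioning $\overline{F}_{\kappa,0}$ on $\mathcal{E}_{\eta,\kappa}=\mathcal{C}_{\eta}$ and using the explicit description of $\overline{F}_{\kappa}$ at level $0$ gives
$$
F_{\kappa/\nu}(j/\eta)={1\over 2^{\kappa}}{^{*}\sum}_{\omega\in\overline{\Omega}_{\kappa}}F_{0,p}({j\over\eta}+{2\over\eta}{^{*}\sum}_{k=1}^{\kappa}\omega_{k})={^{*}\sum}_{r\text{ odd},|r|\leq\kappa}P({^{*}\sum}_{k}\omega_{k}=r)F_{0,p}({j\over\eta}+{2r\over\eta}),
$$
the inner sum ranging over odd $r$ because $\kappa$ is odd.

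Second, I apply Lemma \ref{centrallimit} with $t=\kappa/\nu$, scaling $X_{i}=(2\sqrt{\kappa}/\eta)\omega_{i}$ so that $\sqrt{2t}=2\sqrt{\kappa}/\eta$ and $T_{\kappa}=(2/\eta){^{*}\sum}_{k}\omega_{k}$. The lemma then gives $|P({^{*}\sum}_{k}\omega_{k}=r)-(\sqrt{2}/\sqrt{\pi\kappa}){^{*}exp}(-r^{2}/(2\kappa))|\leq L\kappa^{-3/2}$ for each odd $|r|\leq\kappa$. Reindexing by $r=2i'+1$, so that $2r/\eta=i'/\eta'+2/\eta$, the $S$-continuity of $F_{0,p}$ combined with the infinitesimality of $2/\eta$ gives $F_{0,p}(j/\eta+2r/\eta)\simeq F_{0,p}(j/\eta+i'/\eta')$ uniformly in $i'$. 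A direct computation with $\eta'=\eta/4$ and $t=2\kappa/\eta^{2}$ shows that the coefficient $(1/\eta')(1/(2\sqrt{\pi t})){^{*}exp}(-(i'/\eta')^{2}/(4t))$ from the theorem equals $(\sqrt{2}/\sqrt{\pi\kappa}){^{*}exp}(-2(i')^{2}/\kappa)$, while the identity
$$
{^{*}exp}({-(2i'+1)^{2}\over 2\kappa})={^{*}exp}({-2(i')^{2}\over\kappa})\cdot{^{*}exp}({-(4i'+1)\over 2\kappa})
$$
shows that the CLT Gaussian weight and the theorem's Gaussian weight differ by a factor infinitesimally close to $1$ on the range $|i'|\leq m$ provided $m/\kappa\simeq 0$.

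Finally, one chooses an infinite integer $m$ with $m/\sqrt{\kappa}$ infinite but $m/\kappa\simeq 0$ (for example $m=[\kappa^{3/4}]$) and controls three sources of error: the cumulative CLT error, bounded by $\kappa\cdot L\kappa^{-3/2}\|F_{0}\|_{\infty}=O(\|F_{0}\|_{\infty}\kappa^{-1/2})$; the uniform relative error $O(m/\kappa)$ from the quadratic approximation of the exponent; and the tails $|i'|>m$, which are negligible on the heat-kernel side by the standard Gaussian tail bound and on the random-walk side by a Chebyshev-type estimate using $\mathrm{Var}({^{*}\sum}_{k}\omega_{k})=\kappa$. Boundedness of $F_{0,p}$ follows from its $S$-continuity on the $*$-compact fundamental domain $[0,1)$ together with periodicity, and so each error contribution is infinitesimal. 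I expect the main obstacle to be the simultaneous calibration of $m$, which must be large enough for the Gaussian tails to vanish yet small enough (relative to $\kappa$) for the exponent-expansion remainder to remain uniformly infinitesimal; once this balance is struck, the four approximations combine to yield the formula modulo infinitesimals.
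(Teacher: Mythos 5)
Your proposal is correct and follows the paper's own route: the identity $F_{\kappa/\nu}(j/\eta)=2^{-\kappa}\,{^{*}\sum}_{\omega}F_{0,p}(j/\eta+(2/\eta)\,{^{*}\sum}_{k}\omega_{k})$ from the reverse-martingale property of $\overline{F}_{\kappa}$, followed by the local CLT bound of Lemma \ref{centrallimit} and a reindexing to the $\eta'=\eta/4$ lattice. The only material difference is in the final approximation step: the paper keeps $m$ at roughly the full random-walk range (of order $\kappa$), absorbing the shift $1/(2\eta')$ in the Gaussian argument by appealing to $S$-continuity together with rapid decay of ${^{*}\exp}(-x^{2})$, whereas you expand $(2i'+1)^{2}=4(i')^{2}+4i'+1$ explicitly, which forces a truncation at an $m$ with $m/\kappa\simeq 0$ and $m/\sqrt{\kappa}$ infinite, and then you discharge the tails by Chebyshev on the walk side and Gaussian decay on the kernel side. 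Both choices are admissible since the theorem only asserts the formula ``for some infinite integer $m$,'' and your version makes the cumulative error estimates explicit where the paper's treatment is more compressed.
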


\begin{proof}
We have that;\\

$F_{{\kappa\over\nu}}({j\over\eta})=\eta\int_{[{j\over\eta},{j+1\over\eta})}F_{{\kappa\over\nu}}d\gamma_{\eta}$\\

$=\eta\int_{[{j\over\eta},{j+1\over\eta})}\overline{F}_{\kappa,{\kappa\over\nu}}d\gamma_{\eta}$\\

$=\eta\int_{[{j\over\eta},{j+1\over\eta})}\overline{F}_{\kappa,0}d\gamma_{\eta}$\\

=$\eta{1\over 2^{\kappa}\eta}{^{*}\sum}_{0\leq s\leq 2^{\kappa}-1}\overline{F}_{\kappa,0}({j\over\eta}+{s\over 2^{\kappa}\eta})$\\

$={1\over 2^{\kappa}}{^{*}\sum}_{\omega\in\overline{\Omega}_{\kappa}}F_{0}({j\over\eta}+{2\over \eta}{^{*}\sum}_{1\leq k\leq \kappa}\omega_{k})$\\

$={1\over 2^{\kappa}}{^{*}\sum}_{\omega\in\overline{\Omega}_{\kappa}}F_{0}({j\over\eta}+{\sqrt{2t}\over\sqrt{\kappa}}{^{*}\sum}_{1\leq k\leq \kappa}\omega_{k})$, $\eta^{2}=2\nu$, ${\kappa\over\nu}=t$\\

Letting $\mu_{\kappa}$ be the measure on $\overline{\Omega}_{\kappa}$, defined by $\mu_{\kappa}(\omega)={1\over 2^{\kappa}}$, we have that the random variables $\omega_{k,t}:\overline{{\Omega}_{\kappa}}\rightarrow{^{*}\mathcal{R}}$ defined by $\omega_{k,t}(\omega)=\sqrt{2t}\omega_{k}$ have the property of $*$-independence and satisfy the hypotheses of Lemma \ref{centrallimit}. We let;\\

$\overline{\mathcal{R}_{\eta, \sqrt{2t}\kappa}}=\{x\in{^{*}\mathcal{R}}:-{[(\sqrt{2t\kappa}+1)\eta]\over\eta}\leq x<{[(\sqrt{2t\kappa}+1)\eta]\over\eta}\}$\\

and let;\\

$\mu_{\eta,\sqrt{2t}\kappa}([{i\over\eta},{i+1\over\eta}))={1\over\eta}$, for $i\in I$\\

where $I={^{*}\mathcal{Z}}\cap [-[(\sqrt{2t\kappa}+1)\eta], [(\sqrt{2t\kappa}+1)\eta]-1]$. We let $F_{0,p}$ be the periodic extension of $F_{0}$ to $\overline{\mathcal{R}_{\eta, \sqrt{2t}\kappa}}$. Then, we compute;\\

$F_{\kappa\over\nu}({j\over\eta})={1\over 2^{\kappa}}{^{*}\sum}_{\omega\in\overline{\Omega}_{\kappa}}F_{0}({j\over\eta}+{\sqrt{2t}\over\sqrt{\kappa}}{^{*}\sum}_{1\leq k\leq \kappa}\omega_{k})$\\

$={1\over 2^{\kappa}}{^{*}\sum_{i\in I}}F_{0,p}({j\over\eta}+{i\over\eta}){^{*}\sum}_{\omega\in\overline{\Omega}_{\kappa}}({\sqrt{2t}\over\sqrt{\kappa}}{^{*}\sum}_{1\leq k\leq \kappa}\omega_{k})\in [{i\over\eta},{i+1\over\eta})$\\

$={^{*}\sum_{i\in I}}F_{0,p}({j\over\eta}+{i\over\eta})\mu_{\kappa}(\{\omega:{\sqrt{2t}\over\sqrt{\kappa}}{^{*}\sum}_{1\leq k\leq \kappa}\omega_{k}\in [{i\over\eta},{i+1\over\eta})\})$\\

Observing that ${1\over\eta}={\sqrt{t}\over \sqrt{2\kappa}}$, and using Lemma \ref{centrallimit}, we have that;\\

$\mu_{\kappa}(\{\omega:{\sqrt{2t}\over\sqrt{\kappa}}{^{*}\sum}_{1\leq k\leq \kappa}\omega_{k}\in [{i\over\eta},{i+1\over\eta})\})$\\

$=\mu_{\kappa}(\{\omega:T_{\kappa}(\omega)\in [{i\sqrt{t}\over\sqrt{2\kappa}},{(i+1)\sqrt{t}\over\sqrt{2\kappa}})\})$\\

$=\mu_{\kappa}(\{\omega:T_{\kappa}(\omega)={\sqrt{2t}j\over\sqrt{\kappa}},{i\sqrt{t}\over\sqrt{2\kappa}}\leq {\sqrt{2t}j\over\sqrt{\kappa}}<{(i+1)\sqrt{t}\over\sqrt{2\kappa}}\})$\\

$=\mu_{\kappa}(\{\omega:T_{\kappa}(\omega)={\sqrt{2t}j\over\sqrt{\kappa}},j={i\over 2}\})$\\

$={\sqrt{2}\over\sqrt{\pi\kappa}}{^{*}exp}({-i^{2}\over 8\kappa})+\epsilon$\\

where $|\epsilon|\leq L\kappa^{-3\over 2}$. Observing that ${^{*}Card}(I)\leq 6\kappa$, so $\epsilon{^{*}Card}(I)\simeq 0$, letting $I_{res}=\{i\in I:{i\over 2}\ is\ odd\}$, and replacing $\kappa$ by ${t\eta^{2}\over 2}$,  it follows that;\\

$F_{\kappa\over\nu}({j\over\eta})\simeq {^{*}\sum_{i\in I_{res}}}F_{0,p}({j\over\eta}+{i\over\eta}){\sqrt{2}\over\sqrt{\pi\kappa}}{^{*}exp}({-i^{2}\over 8\kappa})$\\

$={1\over\eta}{^{*}\sum_{i\in I_{res}}}F_{0,p}({j\over\eta}+{i\over\eta}){\sqrt{2}\eta\over\sqrt{\pi\kappa}}{^{*}exp}({-i^{2}\over 8\kappa})$\\

$={1\over\eta}{^{*}\sum_{i\in I_{res}}}F_{0,p}({j\over\eta}+{i\over\eta}){2\over\sqrt{\pi t}}{^{*}exp}(-{({i\over\eta})^{2}\over 4t})$\\

Now let ${\eta'}={\eta\over 4}$, ${i'\over \eta'}+{1\over 2\eta'}={i\over\eta}$ and $I'=\{{i-2\over 4}:i\in I\}$. As $F_{0}$ is S-continuous, it is bounded, therefore the same holds for $F_{0,p}$, we also have that ${^{*}exp(-x^{2})}$ is $S$-continuous and rapidly decreasing. Using these properties, we have that;\\

$F_{\kappa\over\nu}({j\over\eta})\simeq {1\over 4\eta'}{^{*}\sum_{i\in I'}}F_{0,p}(({j\over\eta}+{1\over 2\eta'})+{i'\over \eta'}){2\over\sqrt{\pi t}}{^{*}exp}({-({i'\over\eta'}+{1\over 2\eta'})^{2}\over 4t})$\\

$\simeq {1\over\eta'}{^{*}\sum_{i\in I'}}F_{0,p}(({j\over\eta}+{i'\over \eta'}){1\over 2\sqrt{\pi t}}{^{*}exp}(-{({i'\over\eta'})^{2}\over 4t})$\\

Letting $s=\mu i'(i'\in I_{res})$, and $m=|{s-2\over 4}|$, we can, ignoring a finite number of endpoints if necessary, assume that $I'={^{*}\mathcal{Z}}\cap [-m,m]$. This gives the theorem.

\end{proof}

We now verify that our solution defines the classical solution on specialisation, see Lemma \ref{heatequation}, and provides a solution when the initial condition is just $S$-continuous.\\

\begin{theorem}
\label{convolution}
Let $g\in C^{\infty}([0,1])$, see Definition \ref{smooth}, and let $g_{\eta}:\overline{{\Omega}_{\eta}}\rightarrow{^{*}\mathcal{R}}$ be measurable with the definition $g_{\eta}({i\over\eta})={^{*}g}({i\over\eta})$, for $0\leq i\leq \eta-1$. Let $F:\overline{{\Omega}_{\eta}}\times\overline{\mathcal{T}_{\nu}}\rightarrow{^{*}\mathcal{R}}$ satisfy the nonstandard heat equation, as in Lemma \ref{measurable}, with initial condition $g_{\eta}$. Then, for finite $t\in{^{*}\mathcal{R}}$, with ${^{\circ}t}\neq 0$, and $x\in\overline{\Omega_{\eta}}$, we have that;\\

${{^{\circ}}F_{t}(x)}=\int_{\mathcal{R}}g_{per}({^{\circ}x}+y){1\over 2\sqrt{\pi{^{\circ}t}}}e^{{-y^{2}\over 4{^{\circ}t}}}dy$ $(*)$\\

where $g_{per}$ is the periodic extension of $g$ to $\mathcal{R}$. In particular $F$ specialises to the classical standard solution of the heat equation.\\

If $g\in C[0,1]$, see Definition \ref{smooth}, with the same assumptions as in the first part of the Theorem, then $(*)$ still holds.
\end{theorem}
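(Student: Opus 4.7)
The plan is to apply Theorem~\ref{solution} directly and then take the standard part of the resulting hyperfinite sum, identifying it as a nonstandard Riemann sum for the classical Gaussian convolution. First I would verify that the hypothesis of Theorem~\ref{solution} holds: since $g\in C^{\infty}([0,1])$ (or merely $g\in C([0,1])$) is uniformly continuous on the compact interval $[0,1]$, transfer yields that $g_{\eta}=F_{0}$ is $S$-continuous on $\overline{\Omega_{\eta}}$. The interpretation of $F_{t}$ via Definition~\ref{initial} as $F_{[\nu t]/\nu}$, together with a one-step shift (at most) to achieve an odd value of $\kappa$, introduces only an infinitesimal error in time, so Theorem~\ref{solution} applies and gives
$$F_{\kappa/\nu}\!\left(\frac{j}{\eta}\right) = \frac{1}{\eta'}\, {}^{*}\!\!\sum_{i'\in I'} F_{0,p}\!\left(\frac{j}{\eta}+\frac{i'}{\eta'}\right)\frac{1}{2\sqrt{\pi t}}\,{}^{*}\!\exp\!\left(\frac{-(i'/\eta')^{2}}{4t}\right),$$
with $\eta'=\eta/4$ and $I'={}^{*}\mathcal{Z}\cap[-m,m]$ for some infinite $m$.

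Next, I would recognise the right-hand side as a nonstandard Riemann sum of infinitesimal mesh $1/\eta'$ for the integrand
$$u\mapsto g_{per}({}^{\circ}x+u)\,\frac{1}{2\sqrt{\pi\,{}^{\circ}t}}\, e^{-u^{2}/(4\,{}^{\circ}t)}.$$
Three ingredients enable the passage to standard parts: (a) $g$ is bounded, so $F_{0,p}$ is bounded on ${}^{*}\mathcal{R}$ by some finite $M$; (b) the Gaussian kernel is $S$-continuous in $u$ for non-infinitesimal ${}^{\circ}t$ and its tails are uniformly $S$-integrable, so the portion of the sum with $|u|$ larger than a finite constant can be made uniformly small; (c) the domain length $2m/\eta'$ is infinite, so the sum covers all of $\mathbb{R}$ up to an infinitesimal tail. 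The standard Loeb-theoretic equivalence between hyperfinite Riemann sums of $S$-continuous, $S$-integrable integrands and classical Riemann integrals then gives formula $(*)$. Since the right-hand side of $(*)$ is precisely the classical representation of the solution to $\partial_{t}u=\partial_{x}^{2}u$ on $\mathbb{R}/\mathbb{Z}$ with periodic initial data $g_{per}$, this establishes that $F$ specialises to the classical standard solution.

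The main technical obstacle is the $C([0,1])$ case, since $g_{per}$ generally has jumps at integer points, so $F_{0,p}$ fails to be $S$-continuous in infinitesimal neighbourhoods of ${}^{*}\mathcal{Z}$. The classical side is unaffected because the integers form a Lebesgue-null set and $g_{per}$ is bounded and Riemann-integrable against the smooth heat kernel. On the hyperfinite side I would split the grid $\{i'/\eta' : i'\in I'\}$ into a good part away from integers, where $F_{0,p}$ is $S$-continuous and the standard Loeb argument applies verbatim, and a bad part within infinitesimal distance of an integer; the bad part contributes at most $M\cdot\sup(\text{kernel})\cdot(\text{number of integers in range})\cdot(1/\eta')$, which is infinitesimal because the number of integers in $[-m/\eta',m/\eta']$ times the mesh $1/\eta'$ is controlled, and the Gaussian suppresses the contribution from large $|u|$. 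This closes the gap and yields $(*)$ in the continuous case.
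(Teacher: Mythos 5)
Your proposal follows the paper's main line --- apply Theorem \ref{solution} and take the standard part of the hyperfinite sum using $S$-integrability and the Loeb measure apparatus --- but it has two concrete gaps.

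First, the parity shift. You assert that a one-step shift in time to reach an odd $\kappa$ ``introduces only an infinitesimal error in time, so Theorem \ref{solution} applies.'' An infinitesimal change in the time variable does \emph{not} by itself yield an infinitesimal change in $F$; one needs a quantitative or $S$-continuity statement in $t$. The paper supplies it explicitly: for $g\in C^{\infty}[0,1]$ it invokes Lemma \ref{nsheat} (via Lemma \ref{initialbounded}) to get that ${\partial F\over\partial t}={\partial^{2}F\over\partial x^{2}}$ is bounded, hence $F_{t}(x)\simeq F_{t+{1\over\nu}}(x)$; and for $g\in C[0,1]$, where the second-derivative bound required by Lemma \ref{nsheat} is unavailable, it instead uses $S$-continuity of $F_{0}$ together with the one-step recurrence $F_{1\over\nu}(x)={1\over 2}F_{0}(x+{2\over\eta})+{1\over 2}F_{0}(x-{2\over\eta})$ to conclude $F_{1\over\nu}\simeq F_{0}$, then restarts with $F_{1\over\nu}$ as initial condition. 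Your proposal contains neither justification, and this is precisely the nontrivial step.

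Second, the ``jumps at integer points'' obstacle you identify as the main difficulty in the $C[0,1]$ case does not exist under the paper's conventions. In Definition \ref{smooth}, $C([-\pi,\pi])$ (and by the analogous construction $C[0,1]$) is defined as $\{\mu^{*}(h):h\in C(S^{1})\}$, i.e.\ pullbacks of continuous functions on the circle; consequently the periodic extension $g_{per}$ is continuous on all of $\mathcal{R}$ and $F_{0,p}$ is $S$-continuous everywhere, so your good-part/bad-part decomposition is solving a problem the hypotheses already exclude. The genuine subtlety in the merely continuous case is the one noted above: the mechanism used for the parity shift in the smooth case is unavailable, and a different argument is needed --- which is exactly what your proposal glosses over.
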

\begin{proof}
By Theorem \ref{solution}, if $\kappa$ is odd, and ${^{\circ}t}\neq 0$, using $S$-integrability of;\\

$F_{0,p}({[x\eta]\over\eta}+y){1\over 2\sqrt{\pi t}}{^{exp}}({-y^{2}\over 4t})$\\

on $\overline{\mathcal{R}_{\eta'}}=\{y\in {^{*}\mathcal{R}}:-m\leq [y\eta']\leq m\}$, equipped with the usual measure $\mu_{m,\eta'}$, and basic facts about specialisation of measures, see \cite{adv}, we have that;\\

${^{\circ}F_{{\kappa\over\nu}}(x)}={^{\circ}}\int_{\overline{\mathcal{R}_{\eta'}}}F_{0,p}({[x\eta]\over\eta}+y){1\over 2\sqrt{\pi t}}{^{*}exp}({-y^{2}\over 4t})d\mu_{m,\eta'}$\\

$=\int_{\mathcal{R}}g_{per}({^{\circ}x}+y){1\over 2\sqrt{\pi{^{\circ}t}}}e^{{-y^{2}\over 4{^{\circ}t}}}dy$\\

By Theorem \ref{nsheat}, we have that ${\partial F\over\partial t}={\partial^{2}F\over \partial^{2}x}$ remains bounded, so that $F_{t}(x)\simeq F_{t+{1\over\nu}}(x)$, for all $x\in\overline{\Omega_{\eta}}$. This gives the first result, as we can assume $\kappa$ is odd. The second claim is well known, see for example \cite{stein}. Finally, if $g\in C[0,1]$, then $F_{0}$ is $S$-continuous and bounded, and so is $F_{{1\over\nu}}$, with $F_{{1\over\nu}}(x)\simeq F_{0}(x)$, for $x\in\overline{\Omega_{\eta}}$. It follows that $F_{{1\over\nu},p}\simeq F_{0,p}$ on $\overline{\mathcal{R}_{\eta'}}$, and $S$-continuous. Taking $F_{{1\over\nu}}$ as the initial condition, we can assume that $\kappa$ is odd. Then, repeating the above proof gives the result.

\end{proof}

For notational reasons, we switch to the interval $[-\pi,\pi]$. The reader is invited to make the relevant transposition to the probability space $[0,1]$ with Lebesgue measure. We make a nonstandard analysis of the heat equation in terms of Fourier series.\\

\begin{defn}
\label{smooth}
We let $S^{1}(1)$ denote the circle of radius $1$, which we identify with the closed interval $[-\pi,\pi]$, via $\mu:[-\pi,\pi]\rightarrow S^{1}(1)$, $\mu(\theta)=e^{i\theta}$. $C(S^{1})$ and $C^{\infty}(S^{1})$ have their conventional meanings. We let $C([-\pi,\pi])=\{\mu^{*}(g):g\in C(S^{1})\}$ and $C^{\infty}([-\pi,\pi])=\{\mu^{*}(g):g\in C^{\infty}(S^{1})\}$. We let $T=[-\pi,\pi]\times\mathcal{R}_{\geq 0}$ and $T^{0}=(-\pi,\pi)\times\mathcal{R}_{>0}$ denote its interior. We let $C(T)=\{G, continuous\ on\ T\ ,G_{t}\in C([-\pi,\pi]), for\ t\in\mathcal{R}_{\geq 0}\}$, $C^{\infty}(T)=\{G\in C(T):G_{t}\in C^{\infty}([-\pi,\pi])$, for $t\in\mathcal{R}_{\geq 0}, G|T^{0}\in C^{\infty}(T^{0})\}$. If $h\in C([-\pi,\pi])$, we define its Fourier coefficient by;\\

$\mathcal{F}(h)(m)={1\over 2\pi}\int_{-\pi}^{\pi}h(x)e^{-imx}dx$\\

for $m\in\mathcal{Z}$. If $g\in C(T)$, we define its Fourier transform in space by;\\

 $\mathcal{F}(g)(m,t)={1\over 2\pi}\int_{-\pi}^{\pi}g(x,t)e^{-imx}dx$\\

 for $m\in\mathcal{Z}$.
\end{defn}

\begin{lemma}
\label{heatequation}
If $g\in C^{\infty}([-\pi,\pi])$, there exists a unique $G\in C^{\infty}(T)$, with $G_{0}=g$, such that $G$ satisfies the heat equation;\\

${\partial{G}\over \partial t}={\partial^{2}G\over \partial x^{2}}$ $(*)$\\

on $T^{0}$.\\

\end{lemma}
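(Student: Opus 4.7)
My plan is to construct $G$ explicitly via its Fourier series and then verify the required properties. Since $g \in C^\infty([-\pi,\pi])$ corresponds, by the definition in \ref{smooth}, to a smooth function on $S^1$, its Fourier coefficients $\hat{g}(m) = \mathcal{F}(g)(m)$ are rapidly decreasing: for each $k \geq 0$ there is a constant $C_k$ with $|\hat{g}(m)| \leq C_k(1+|m|)^{-k}$. This follows by integration by parts $k$ times, using periodicity of $g$ and its derivatives on $S^1$.

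I would then define
\[
G(x,t) = \sum_{m \in \mathcal{Z}} \hat{g}(m)\, e^{-m^2 t}\, e^{imx}.
\]
The first task is to show $G \in C^\infty(T)$. For any multi-index of derivatives $\partial_t^a \partial_x^b$, term-by-term differentiation produces a series whose $m$-th term is bounded by $|\hat{g}(m)|\,m^{2a+b}\,e^{-m^2 t}$. On the closed region $t \geq 0$, using the rapid decay of $\hat{g}(m)$ (choosing $k \geq 2a+b+2$), the series converges absolutely and uniformly, so $G$ is $C^\infty$ on all of $T$ (not merely on $T^0$), and the partial derivatives can be computed term by term. Evaluating at $t=0$ recovers the Fourier series of $g$, giving $G_0 = g$; and since $\partial_t(e^{-m^2 t} e^{imx}) = -m^2 e^{-m^2 t} e^{imx} = \partial_x^2(e^{-m^2 t} e^{imx})$, term-by-term differentiation verifies that $G$ satisfies the heat equation on $T^0$ (and in fact on $T$).

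For uniqueness, suppose $G_1, G_2 \in C^\infty(T)$ both satisfy the heat equation on $T^0$ with $G_{1,0} = G_{2,0} = g$. Set $H = G_1 - G_2$, so $H_0 = 0$ and $H_t = H_{xx}$ on $T^0$. I would use the energy method: define $E(t) = \int_{-\pi}^{\pi} H(x,t)^2\, dx$. Then, using $H \in C^\infty(T)$ and periodicity coming from $H_t \in C^\infty([-\pi,\pi])$, integration by parts gives
\[
E'(t) = 2\int_{-\pi}^{\pi} H\, H_{xx}\, dx = -2\int_{-\pi}^{\pi} H_x^2\, dx \leq 0.
\]
Since $E(0) = 0$ and $E \geq 0$, we conclude $E \equiv 0$ and hence $H \equiv 0$.

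The main technical obstacle is the smoothness of $G$ up to the boundary $t = 0$; for $t > 0$ the Gaussian damping $e^{-m^2 t}$ makes convergence of all derivatives trivial, but at $t = 0$ one must genuinely invoke that $g$ extends smoothly to $S^1$ rather than merely being smooth on the open interval $(-\pi,\pi)$. A second subtle point is justifying the boundary term in the integration by parts for the energy estimate, which again relies on the periodic smoothness encoded in the definition of $C^\infty([-\pi,\pi])$ via $\mu^*$.
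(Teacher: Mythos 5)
Your existence construction matches the paper's: both build $G$ as the Fourier series $\sum_m \hat{g}(m) e^{-m^2 t} e^{imx}$ and rely on the rapid decay of $\hat{g}(m)$ coming from $g \in C^\infty(S^1)$. Your treatment of smoothness up to $t=0$ is actually more explicit than the paper's, which only notes that the series converges absolutely for $t>0$; your observation that for derivative bounds one must genuinely use periodic smoothness, not merely smoothness on the open interval, is a real and correct refinement.

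Where you diverge is uniqueness. The paper argues via Fourier analysis throughout: it takes the spatial Fourier coefficient of any putative solution $G$, differentiates under the integral sign and integrates by parts to show $\mathcal{F}(G)(m,t)$ satisfies the ODE $\partial_t \mathcal{F}(G)(m,t) + m^2 \mathcal{F}(G)(m,t) = 0$ with initial value $\hat{g}(m)$, invokes Picard--Lindel\"of for uniqueness of the ODE solution, and then uses that a function in $C^\infty([-\pi,\pi])$ is determined by its Fourier coefficients. You instead use the energy method: set $H = G_1 - G_2$, $E(t) = \int_{-\pi}^{\pi} H^2\,dx$, and show $E' = -2\int H_x^2\,dx \leq 0$ after the boundary term in integration by parts drops out by periodicity, whence $E \equiv 0$ from $E(0)=0$, $E \geq 0$. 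Both arguments are correct. Yours is more elementary and self-contained (no appeal to ODE existence-uniqueness theory, and no need to justify termwise differentiation of the Fourier coefficient in $t$), while the paper's has the virtue of running entirely in the Fourier-coefficient picture, which is the framework the rest of the paper's nonstandard argument (Lemma \ref{coefficient}, Theorem \ref{specialises}) is built to mirror. One minor point worth making explicit in your version: the heat equation is only assumed on $T^0$, so $E'(t) \leq 0$ is established only for $t>0$; you then need the continuity of $E$ at $t=0$ (which follows from $H \in C(T)$) to pass from $E(0^+) = 0$ and monotonicity to $E \equiv 0$.
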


\begin{proof}
Suppose, first, there exists such a solution $G$, then, applying $\mathcal{F}$ to $(*)$, we must have that;\\

$\mathcal{F}({\partial{G}\over \partial t}-{\partial^{2}G\over \partial x^{2}})(m,t)=0$ $(t>0,m\in\mathcal{Z})$\\

Differentiating under the integral sign, we have that;\\

 $\mathcal{F}({\partial{G}\over \partial t})={\partial \mathcal{F}(G)\over \partial t}(m,t)$, for $t>0,m\in\mathcal{Z}$\\

 Integrating by parts and using the fact that $G_{t}\in C^{\infty}([-\pi,\pi])$, for $t>0$, we have that;\\

$\mathcal{F}{\partial^{2}G\over \partial x^{2}}=-m^{2}\mathcal{F}(G)(m,t)$, for $t>0,m\in\mathcal{Z}$\\

We thus obtain the sequence of ordinary differential equations, indexed by $m\in\mathcal{Z}$;\\

${\partial \mathcal{F}(G)\over \partial t}+m^{2}\mathcal{F}(G)(m,t)=0$ $(t>0)$\\

As $G\in C(T)$, $G_{t}\rightarrow G_{0}$ pointwise , as $t\rightarrow 0$, and, using the Dominated Convergence Theorem, $\mathcal{F}(G)(m,t)\rightarrow \mathcal{F}(G)(m,0)$, as $t\rightarrow 0$, for each $m\in\mathcal{Z}$. By Picard's and Peano's Theorem, see \cite{adv}, Chapter 4, this system of equations has a unique continuous solution, given by;\\

$\mathcal{F}(G)(m,t)=e^{-m^{2}t}\mathcal{F}(g)(m)$ $(t\geq 0)$\\

As $G_{t}\in C^{\infty}([-\pi,\pi])$, its Fourier series converges absolutely to $G_{t}$ and, in particular, $G_{t}$ is determined by its Fourier coefficients, for $t>0$. It follows that $G$ is a unique solution.\\

If $g\in C^{\infty}([-\pi,\pi])$, its Fourier series converges absolutely to $g$, hence, the series;\\

$\sum_{m\in\mathcal{Z}}e^{-m^{2}t}\mathcal{F}(g)(m)e^{imx}$\\

are absolutely convergent for $t>0$. It follows that $G$ defined by;\\

$G(x,t)=\sum_{m\in\mathcal{Z}}e^{-m^{2}t}\mathcal{F}(g)(m)e^{imx}$\\

is a solution of the required form.
\end{proof}

We introduce more notation.\\

\begin{defn}
If $\eta\in{{^{*}\mathcal{N}}\setminus\mathcal{N}}$, we let $\overline{\mathcal{V}_{\eta}}={^{*}\bigcup_{0\leq i\leq 2\eta-1}}[-\pi+\pi{i\over\eta},-\pi+\pi{i+1\over\eta})$, so that $\overline{\mathcal{V}_{\eta}}={^{*}[-\pi,\pi)}$. We let $\mathcal{D}_{\eta}$ denote the associated $*$-finite algebra, generated by the intervals $[-\pi+\pi{i\over\eta},-\pi+\pi{i+1\over\eta})$, for $0\leq i\leq 2\eta-1$, and $\mu_{\eta}$ the associated counting measure defined by $\mu_{\eta}([-\pi+\pi{i\over\eta},-\pi+\pi{i+1\over\eta}))={\pi\over\eta}$. We let $(\overline{\mathcal{V}_{\eta}},L(\mathcal{D}_{\eta}),L(\mu_{\eta}))$ denote the associated Loeb space, see \cite{L}. If $\nu\in{{^{*}\mathcal{N}}\setminus\mathcal{N}}$, we let $\overline{\mathcal{T}_{\nu}}={^{*}\bigcup_{0\leq i\leq \nu^{2}-1}}[{i\over\nu},{i+1\over\nu})$, so that $\overline{\mathcal{T}_{\nu}}=[0,\nu)\subset{^{*}\mathcal{R}_{\geq 0}}$.We let $\mathcal{C}_{\eta}$ denote the associated $*$-finite algebra, generated by the intervals $[{i\over\nu},{i+1\over\nu})$, for $0\leq i\leq \nu^{2}-1$, and $\lambda_{\nu}$ the associated counting measure defined by $\lambda_{\nu}([{i\over\nu},{i+1\over\nu}))={1\over\nu}$. We let $(\overline{\mathcal{T}_{\nu}},L(\mathcal{C}_{\nu}),L(\lambda_{\nu}))$ denote the associated Loeb space.\\

We let $([-\pi,\pi],\mathfrak{D},\mu)$ denote the interval $[-\pi,\pi]$, with the completion $\mathfrak{D}$ of the Borel field, and $\mu$ the restriction of Lebesgue measure. We let $(\mathcal{R}_{\geq 0}\cup\{+\infty\},\mathfrak{C},\lambda)$ denote the extended real half line, with the completion $\mathfrak{C}$ of the extended Borel field, and $\lambda$ the extension of Lebesgue measure, with $\lambda(+\infty)=\infty$, see \cite{adv}, Chapter 6.\\

We let $(\overline{\mathcal{V}_{\eta}}\times \overline{\mathcal{T}_{\nu}},\mathcal{D}_{\eta}\times\mathcal{C}_{\nu},\mu_{\eta}\times\lambda_{\nu})$ be the associated product space and $(\overline{\mathcal{V}_{\eta}}\times \overline{\mathcal{T}_{\nu}},L(\mathcal{D}_{\eta}\times\mathcal{C}_{\eta}),  L(\mu_{\eta}\times \lambda_{\nu}))$ be the corresponding Loeb space. $(\overline{\mathcal{V}_{\eta}}\times \overline{\mathcal{T}_{\nu}},L(\mathcal{D}_{\eta})\times L(\mathcal{C}_{\nu}),L(\mu_{\eta})\times L(\lambda_{\nu}))$ is the complete product of the Loeb spaces $(\overline{\mathcal{V}_{\eta}},L(\mathcal{D}_{\eta}),L(\mu_{\eta}))$ and $(\overline{\mathcal{T}_{\nu}},L(\mathcal{C}_{\nu}),L(\lambda_{\nu}))$. Similarly, $([-\pi,\pi]\times(\mathcal{R}_{\geq 0}\cup\{+\infty\} ,\mathfrak{D}\times\mathfrak{C},\mu\times\lambda)$ is the complete product of $([-\pi,\pi],\mathfrak{D},\mu)$ and $(\mathcal{R}_{\geq 0}\cup\{+\infty\},\mathfrak{C},\lambda)$.\\

We let $({^{*}\mathcal{R}},{^{*}\mathfrak{E}})$ denote the hyperreals, with the transfer of the Borel field $\mathfrak{C}$ on $\mathcal{R}$. A function $f:(\overline{\mathcal{V}_{\eta}},\mathcal{D}_{\eta})\rightarrow ({^{*}\mathcal{R}},{^{*}\mathfrak{E}})$ is measurable, if $f^{-1}:{^{*}\mathfrak{E}}\rightarrow \mathcal{D}_{\eta}$. The same definition holds for ${\mathcal{T}_{\nu}}$.  Similarly, $f:(\overline{\mathcal{V}_{\eta}}\times\overline{\mathcal{T}_{\nu}},\mathcal{D}_{\eta}\times\mathcal{C}_{\nu})\rightarrow ({^{*}\mathcal{R}},{^{*}\mathfrak{E}})$ is measurable, if $f^{-1}:{^{*}\mathfrak{E}}\rightarrow \mathcal{D}_{\eta}\times\mathfrak{C}_{\nu}$. Observe that this is equivalent to the definition given in \cite{L}. We will abbreviate this notation to $f:\overline{\mathcal{V}_{\eta}}\rightarrow{^{*}\mathcal{R}}$, $f:\overline{\mathcal{V}_{\eta}}\rightarrow{^{*}\mathcal{R}}$ or $f:\overline{\mathcal{V}_{\eta}}\times\overline{\mathcal{T}_{\nu}}\rightarrow{^{*}\mathcal{R}}$ is measurable, $(*)$. The same
applies to $({^{*}\mathcal{C}},{^{*}\mathfrak{E}})$, the hyper complex numbers, with the transfer of the Borel field $\mathfrak{E}$, generated by the complex topology. Observe that $f:\overline{\mathcal{V}_{\eta}}\rightarrow{^{*}\mathcal{C}}$, $f:\overline{\mathcal{T}_{\nu}}\rightarrow{^{*}\mathcal{C}}$  $f:\overline{\mathcal{V}_{\eta}}\times\overline{\mathcal{T}_{\nu}}\rightarrow{^{*}\mathcal{C}}$ is measurable, in this sense, iff $Re(f)$ and $Im(f)$ are measurable in the sense of $(*)$.\\

We let $\overline{\mathcal{S}_{\eta,\nu}}=\overline{\mathcal{V}_{\eta}}\times\overline{\mathcal{T}_{\nu}}$ and;\\

$V(\overline{\mathcal{V}_{\eta}})=\{f:\overline{\mathcal{V}_{\eta}}\rightarrow {^{*}}{\mathcal{C}},\ f\ measurable\ d(\mu_{\eta})\}$\\

and, similarly, we define $V(\overline{\mathcal{T}_{\nu}})$. Let;\\

$V(\overline{\mathcal{S}_{\eta,\nu}})=\{f:\overline{\mathcal{S}_{\eta,\nu}}\rightarrow {^{*}}{\mathcal{C}},\ f\ measurable\ d(\mu_{\eta}\times\lambda_{\nu})\}$\\

\end{defn}

\begin{lemma}
\label{squaremmp}

The identity;\\

$i:(\overline{\mathcal{V}_{\eta}}\times \overline{\mathcal{T}_{\nu}}, L(\mathcal{D}_{\eta}\times \mathcal{C}_{\nu}),L(\mu_{\eta}\times \lambda_{\nu}) )$\\

$\rightarrow (\overline{\mathcal{V}_{\eta}}\times \overline{\mathcal{T}_{\nu}}, L(\mathcal{D}_{\eta})\times L(\mathcal{C}_{\nu}), L(\mu_{\eta})\times L(\lambda_{\nu}))$\\

and the standard part mapping;\\

$st:(\overline{\mathcal{V}_{\eta}}\times \overline{\mathcal{T}_{\nu}}, L(\mathcal{D}_{\eta})\times L(\mathcal{C}_{\nu}), L(\mu_{\eta})\times L(\lambda_{\nu}))\rightarrow [-\pi,\pi]\times \mathcal{R}_{\geq 0}\cup\{+\infty\} $\\

are measurable and measure preserving.
\end{lemma}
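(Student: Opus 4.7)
The plan is to dispatch the two parts separately, each by appealing to standard results of nonstandard measure theory, suitably adapted to our sigma-finite setting.

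For the identity map, the content is the classical inclusion $L(\mathcal{D}_\eta)\times L(\mathcal{C}_\nu)\subseteq L(\mathcal{D}_\eta\times\mathcal{C}_\nu)$ together with agreement of the two measures on the smaller algebra; this is Keisler's Fubini theorem in the form given in \cite{L}. Since $\mu_\eta$ is a finite internal measure while $\lambda_\nu$ is only $*$-finite, I would first decompose $\overline{\mathcal{T}_\nu}$ as the countable disjoint union of the sets $T_n=\{t\in\overline{\mathcal{T}_\nu}: n\leq {^{\circ}t}<n+1\}$ for $n\in\mathcal{N}$, together with the infinite piece $T_\infty=\{t\in\overline{\mathcal{T}_\nu}: t\text{ infinite}\}$. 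Each $T_n$ is Loeb measurable of Loeb mass one, and $T_\infty$ is Loeb measurable of mass $+\infty$, being the complement of $\bigcup_n T_n$. On each finite-measure slab $\overline{\mathcal{V}_\eta}\times T_n$, a monotone class argument starting from the internal rectangles $B\times C\in\mathcal{D}_\eta\times\mathcal{C}_\nu$ (on which both Loeb product measures trivially coincide, each returning ${^{\circ}}(\mu_\eta(B)\lambda_\nu(C))$) shows that $L(\mu_\eta)\times L(\lambda_\nu)$ and $L(\mu_\eta\times\lambda_\nu)$ agree on $L(\mathcal{D}_\eta)\times L(\mathcal{C}_\nu)$. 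Countable additivity then extends the identity and measure agreement to the whole product.

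For the standard part map, I would handle the two factors separately via Anderson's classical theorem: the maps $st_\eta:(\overline{\mathcal{V}_\eta},L(\mathcal{D}_\eta),L(\mu_\eta))\to([-\pi,\pi],\mathfrak{D},\mu)$ and $st_\nu:(\overline{\mathcal{T}_\nu},L(\mathcal{C}_\nu),L(\lambda_\nu))\to(\mathcal{R}_{\geq 0}\cup\{+\infty\},\mathfrak{C},\lambda)$ are both measurable and measure preserving, the latter requiring the verification that $st_\nu^{-1}(+\infty)=T_\infty$ has Loeb mass $+\infty$, in accordance with $\lambda(\{+\infty\})=+\infty$. To combine them, I would verify measurability of $st_\eta\times st_\nu$ on rectangle generators $A\times E$ of $\mathfrak{D}\times\mathfrak{C}$ via $(st_\eta\times st_\nu)^{-1}(A\times E)=st_\eta^{-1}(A)\times st_\nu^{-1}(E)\in L(\mathcal{D}_\eta)\times L(\mathcal{C}_\nu)$, and then invoke uniqueness of the sigma-finite product measure to extend measure preservation from rectangles (where it follows factor-wise from Anderson) to the full product algebra $\mathfrak{D}\times\mathfrak{C}$.

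The main obstacle is the bookkeeping around the sigma-finiteness of $\lambda_\nu$: one must verify carefully that $T_\infty$ behaves correctly both as a Loeb measurable set of infinite mass and as the preimage of the point at infinity in the extended half line, and that the Keisler-Fubini inclusion still goes through slab by slab before gluing. Once this is handled, the monotone class argument for the identity map and the rectangle-generator argument for the standard part map each go through piece by piece and combine, by countable additivity, to yield the full result.
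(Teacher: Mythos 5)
Your overall strategy — slab decomposition of $\overline{\mathcal{T}_\nu}$, Anderson's theorem factor-wise, a pi-system/monotone class (Caratheodory uniqueness) argument — is faithful to what the paper gestures at in its one-line proof, and your factor-wise treatment of $st$ on rectangle generators is exactly right. The step that fails is the final gluing: ``countable additivity then extends the identity and measure agreement to the whole product.'' Countable additivity only carries the agreement across $\overline{\mathcal{V}_\eta}\times\bigcup_n T_n$, i.e.\ across the finite-time part. The slab $\overline{\mathcal{V}_\eta}\times T_\infty$ has infinite Loeb product measure and cannot be exhausted by countably many sets of finite $L(\mu_\eta\times\lambda_\nu)$-measure, so uniqueness of Caratheodory extension does not apply there and the monotone class argument does not reach it. You flagged that one ``must verify carefully that $T_\infty$ behaves correctly,'' but that verification does not go through along the route you propose.

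In fact the measure-preservation claim for the identity map fails on that slab with the definitions as given, so no argument could repair the gluing step. Take $\kappa$ infinite with $\kappa/\eta\simeq 0$ and let $A_0=[-\pi,-\pi+\pi\kappa/\eta)\in\mathcal{D}_\eta$, so $\mu_\eta(A_0)=\pi\kappa/\eta$ is a nonzero infinitesimal and $L(\mu_\eta)(A_0)=0$. Then $(L(\mu_\eta)\times L(\lambda_\nu))(A_0\times T_\infty)=0\cdot\infty=0$ under the product-measure convention. However $A_0\times[\nu/2,\,\nu/2+\eta/(\pi\kappa))$ is an internal rectangle contained in $A_0\times T_\infty$ with $(\mu_\eta\times\lambda_\nu)$-measure exactly $1$, and scaling the time interval shows $L(\mu_\eta\times\lambda_\nu)(A_0\times T_\infty)=\infty$. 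So the two measures already disagree on a rectangle in $L(\mathcal{D}_\eta)\times L(\mathcal{C}_\nu)$. The inclusion of sigma-algebras (measurability of the identity) still holds, and your argument for the standard part map is not affected by this example, since both $\mu\times\lambda$ and $L(\mu_\eta)\times L(\lambda_\nu)$ use the $0\cdot\infty=0$ convention; but the measure-preservation claim for the identity map needs to be restricted to the finite-time part $\overline{\mathcal{V}_\eta}\times\bigcup_n T_n$, or to the sigma-finite trace, for the statement to be true and for your proof to close.
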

\begin{proof}
The proof is similar to work in \cite{adv}, Chapter 6, using Caratheodory's Extension Theorem and Theorem 22 of \cite{and}.\\
\end{proof}

\begin{defn}{Discrete Partial Derivatives}\\
\label{partials}

Let $f:\overline{\mathcal{V}_{\eta}}\rightarrow{^{*}\mathcal{C}}$ be measurable. We define the discrete derivative $f'$ to be the unique measurable function satisfying;\\

$f'(-\pi+\pi{i\over\eta})={\eta\over2\pi}(f(-\pi+\pi{i+1\over\eta})-f(-\pi+\pi{i-1\over\eta}))$;\\

for $i\in{^{*}\mathcal{N}}_{1\leq i\leq 2\eta-2}$.\\

$f'(\pi-{\pi\over\eta})={\eta\over 2\pi}(f(-\pi)-f(\pi-\pi{2\over\eta}))$\\

$f'(-\pi)={\eta\over 2\pi}(f(-\pi+{\pi\over\eta})-f(\pi-{\pi\over\eta}))$\\

Let $f:\overline{\mathcal{T}_{\nu}}\rightarrow{^{*}\mathcal{C}}$ be measurable. We define the discrete derivative $f'$ to be the unique measurable function satisfying;\\

$f'({i\over\nu})=\nu(f({i+1\over\nu})-f({i\over\nu}))$;\\

for $i\in{^{*}\mathcal{N}}_{0\leq i\leq \nu^{2}-2}$.\\

$f'({\nu-1\over\nu})=0$;\\

If $f:\overline{\mathcal{V}_{\eta}}\rightarrow{^{*}\mathcal{C}}$ is measurable, then we define the shift (left, right);\\

$f^{lsh}(-\pi+\pi{j\over \eta})=f(-\pi+\pi{j+1\over\eta})$ for $0\leq j\leq 2\eta-2$\\

$f^{lsh}(\eta-{\pi\over \eta})=f(-\pi)$\\

$f^{rsh}(-\pi+\pi{j\over\eta})=f(-\pi+\pi{j-1\over\eta})$ for $1\leq j\leq 2\eta-1$\\

$f^{rsh}(-\pi)=f(\pi-{\pi\over\eta})$\\

If $f:\overline{\mathcal{T}_{\nu}}\rightarrow{^{*}\mathcal{C}}$ is measurable, then we define the shift (left, right);\\

$f^{lsh}({j\over\nu})=f({j+1\over\nu})$ for $0\leq j\leq \nu^{2}-2$\\

$f^{lsh}(\nu-{1\over\nu})=f(0)$\\

$f^{rsh}({j\over\nu})=f({j-1\over\nu})$ for $1\leq j\leq \nu^{2}-1$\\

$f^{rsh}(0)=f(\nu-{1\over\nu})$\\

If $f:\overline{\mathcal{V}_{\eta}}\times\overline{\mathcal{T}_{\nu}}\rightarrow{^{*}\mathcal{C}}$ is measurable. Then we define $\{{\partial f\over\partial x},{\partial f\over\partial t}\}$ to be the unique measurable functions satisfying;\\

${\partial f\over\partial x}(-\pi+\pi{i\over\eta},t)={\eta\over 2\pi}(f(-\pi+\pi{i+1\over\eta},t)-f(-\pi+\pi{i-1\over\eta},t))$;\\

for $i\in{^{*}\mathcal{N}}_{1\leq i\leq 2\eta-2}, t\in\overline{\mathcal{T}_{\nu}}$\\

${\partial f\over\partial x}(\pi-{\pi\over\eta},t)={\eta\over 2\pi}(f(-\pi,t)-f(\pi-\pi{2\over\eta},t))$\\

${\partial f\over\partial x}(-\pi,t)={\eta\over 2\pi}(f(-\pi+{\pi\over\eta},t)-f(\pi-{\pi\over\eta},t))$\\

${\partial f\over\partial t}(x,{j\over\nu})=\nu(f(x,{j+1\over\nu})-f(x,{j\over\nu}))$;\\

for $j\in{^{*}\mathcal{N}}_{0\leq j\leq \nu^{2}-2}, x\in\overline{\mathcal{H}_{\eta}}$\\

${\partial f\over\partial t}(x,\nu-{1\over\nu})=0$\\

We define $\{f^{lsh_{x}},f^{lsh_{t}},f^{rsh_{x}},f^{rsh_{t}}\}$ by;\\

$f^{lsh_{x}}(x_{0},t_{0})=(f_{t_{0}})^{lsh}(x_{0})$\\

$f^{lsh_{t}}(x_{0},t_{0})=(f_{x_{0}})^{lsh}(t_{0})$\\

$f^{rsh_{x}}(x_{0},t_{0})=(f_{t_{0}})^{rsh}(x_{0})$\\

$f^{rsh_{t}}(x_{0},t_{0})=(f_{x_{0}})^{rsh}(t_{0})$\\

where, if $(x_{0},t_{0})\in\overline{\mathcal{V}_{\eta}}\times\overline{\mathcal{T}_{\nu}}$;\\

$f_{t_{0}}(x_{0})=f_{x_{0}}(t_{0})=f(\pi{[{\eta x_{0}\over \pi}]\over\eta},{[\nu t_{0}]\over \nu})$\\

\end{defn}

\begin{lemma}
\label{rmkpartials}
If $f$ is measurable, then so are;\\

 $\{{\partial f\over\partial x},{\partial f\over\partial t},{\partial^{2} f\over\partial x^{2}},f_{x},f_{t},f^{lsh_{x}},f^{lsh_{t}},f^{rsh_{x}},f^{rsh_{t}},f^{lsh_{x}^{2}},f^{lsh_{t}^{2}},f^{rsh_{x}^{2}},f^{rsh_{t}^{2}}\}$\\

\end{lemma}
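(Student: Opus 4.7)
The plan is to reduce every operation in the list to a combination of two elementary operations on $f$: \emph{precomposition with an internal permutation of atoms} (for the shifts and the projections $f_x,f_t$) and \emph{$*$-finite ${^*\mathcal{C}}$-linear combinations of already-measurable functions} (for the discrete derivatives). Since the underlying algebras $\mathcal{D}_\eta$, $\mathcal{C}_\nu$, and $\mathcal{D}_\eta\times\mathcal{C}_\nu$ are $*$-finite, measurability of a function into $({^*\mathcal{C}},{^*\mathfrak{E}})$ is equivalent to the function being internally constant on each atom of the generating hyperfinite partition, and the class of measurable functions is closed under $*$-linear combinations and composition with internal measurable maps.

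First I would verify that the cyclic shift on $\overline{\mathcal{V}_\eta}$ carrying the atom $[-\pi+\pi i/\eta,-\pi+\pi(i+1)/\eta)$ to its neighbour (with the stated wraparound convention), and the analogous shifts on the time atoms of $\overline{\mathcal{T}_\nu}$, are internal bijections permuting the atomic partitions and are therefore measure-preserving. Consequently each of $f^{lsh_x},f^{rsh_x},f^{lsh_t},f^{rsh_t}$ is $f$ precomposed with such a bijection (acting trivially in the orthogonal variable when $f$ has two arguments) and is measurable. The iterated versions $f^{lsh_x^2}$, $f^{rsh_x^2}$, $f^{lsh_t^2}$, $f^{rsh_t^2}$ are then measurable as compositions of two such measurable shifts. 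The functions $f_x,f_t$ are, by definition, $f$ composed with the internal map $(x_0,t_0)\mapsto(\pi[\eta x_0/\pi]/\eta,[\nu t_0]/\nu)$, which is measurable; indeed they reproduce $f$ on every atom.

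Next I would observe that the discrete partial derivatives are $*$-finite ${^*\mathcal{C}}$-linear combinations of shifted copies of $f$:
\[
\frac{\partial f}{\partial x}=\frac{\eta}{2\pi}\bigl(f^{lsh_x}-f^{rsh_x}\bigr),\qquad \frac{\partial f}{\partial t}=\nu\bigl(f^{lsh_t}-f\bigr),
\]
these equalities holding on every atom outside the finitely many exceptional ones at which Definition \ref{partials} fixes a single-atom value (the spatial wraparound atoms and the terminal $0$ in time). Modifying a measurable function on a single atom preserves measurability (since the atom itself lies in the $*$-finite algebra), so both discrete derivatives are measurable, and iterating the spatial derivative operation once more gives measurability of $\partial^2 f/\partial x^2$.

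I expect no genuine obstacle: the argument reduces to the remark that the $*$-finite atomic structure of the algebras is preserved under cyclic shifts and finite linear combinations, and that the class of measurable functions into $({^*\mathcal{C}},{^*\mathfrak{E}})$ forms an algebra closed under composition with internal measurable self-maps. The only item requiring care is the bookkeeping at the boundary and terminal atoms of Definition \ref{partials}, which I would dispose of as single-atom perturbations of an otherwise purely shift-defined function.
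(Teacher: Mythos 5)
Your proposal is correct, but it takes a genuinely different route from the paper. The paper's proof is a one-line appeal to transfer: it invokes the analogous statement for discrete functions on a finite grid $\mathcal{H}_{n}\times\mathcal{T}_{m}$, $n,m\in\mathcal{N}$, and cites Chapter 6 of \cite{adv}. You instead give a self-contained, direct internal argument: the shifts and the projections $f_{x},f_{t}$ are precompositions of $f$ with internal maps that permute (or collapse onto) the atoms of the generating $*$-finite partitions, hence preserve measurability; the discrete derivatives are then $*$-linear combinations of such shifted copies, with any discrepancy confined to an internal union of atoms (itself in the algebra). This has the advantage of being verifiable without the external reference to \cite{adv}, at the cost of a little bookkeeping; the paper's route is shorter and uniform, provided the cited chapter is trusted. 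One small inaccuracy in your write-up: the identity $\frac{\partial f}{\partial x}=\frac{\eta}{2\pi}(f^{lsh_{x}}-f^{rsh_{x}})$ in fact holds at the spatial wraparound atoms as well (the shift conventions in Definition \ref{partials} are set up to make the wraparound consistent), so the only genuinely exceptional set is the terminal time slice $\overline{\mathcal{V}_{\eta}}\times[\nu-\frac{1}{\nu},\nu)$, on which $\frac{\partial f}{\partial t}$ is declared to be $0$; since that slice is an internal union of atoms and hence lies in $\mathcal{D}_{\eta}\times\mathcal{C}_{\nu}$, this does not affect the validity of your argument.
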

\begin{proof}
 This follows immediately, by transfer, from the corresponding result for the discrete derivatives and shifts of discrete functions $f:{\mathcal{H}_{n}}\times{\mathcal{T}_{m}}\rightarrow\mathcal{C}$, where $n,m\in{\mathcal{N}}$, see \cite{adv}, Chapter 6.
\end{proof}

\begin{lemma}
\label{discretederivativeshift}

Let $g,h:\overline{\mathcal{V}_{\eta}}\rightarrow{^{*}\mathcal{C}}$ be measurable. Then;\\

$(i)$. $\int_{\overline{\mathcal{V}_{\eta}}}g'(y)d\mu_{\eta}(y)=0$\\

$(ii)$. $(gh)'=g'h^{lsh}+g^{rsh}h'$\\

$(iii)$. $\int_{\overline{\mathcal{V}_{\eta}}}(g'h)(y)d\mu_{\eta}(y)=-\int_{\overline{\mathcal{V}_{\eta}}}gh'd\mu_{\eta}(y)$\\

$(iv)$. $\int_{\overline{\mathcal{V}_{\eta}}}g(y)d\mu_{\eta}(y)=\int_{\overline{\mathcal{V}_{\eta}}}g^{lsh}(y)d\mu_{\eta}(y)=\int_{\overline{\mathcal{V}_{\eta}}}g^{rsh}(y)d\mu_{\eta}(y)$\\

$(v)$. $(g')^{rsh}=(g^{rsh})'$, $(g')^{lsh}=(g^{lsh})'$ \\

$(vi)$. $\int_{\overline{\mathcal{V}_{\eta}}}(g''h)(y)d\mu_{\eta}(y)=\int_{\overline{\mathcal{V}_{\eta}}}(gh'')(y)d\mu_{\eta}(y)$\\

 \end{lemma}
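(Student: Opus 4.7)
The plan is to reduce every assertion to a direct computation on the grid values $g(x_i),h(x_i)$ with $x_i = -\pi + \pi i/\eta$, $0 \le i \le 2\eta-1$, treating indices cyclically modulo $2\eta$, since the definitions of $f'$, $f^{lsh}$ and $f^{rsh}$ in Definition \ref{partials} are tailored so that the asymmetric boundary cases collapse to cyclic conventions. In this setup $\int_{\overline{\mathcal{V}_{\eta}}} g\, d\mu_{\eta} = (\pi/\eta) \sum_{i=0}^{2\eta-1} g(x_i)$ and each identity becomes a cyclic-sum statement; either one verifies the corresponding identity for finite $n$ on $2n$ grid points and transfers, or one argues directly on the $*$-finite grid, as the defining schema is internal.

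I would dispatch (i), (iv), (v) first. For (i), $\int g'\,d\mu_{\eta} = \tfrac12 \sum_i (g(x_{i+1})-g(x_{i-1}))$ telescopes to $0$ by cyclicity. For (iv), left and right shift act as bijections $i \mapsto i \pm 1 \pmod{2\eta}$ of the index set, so the weighted sum is unchanged. For (v), both $(g^{rsh})'(x_i)$ and $(g')^{rsh}(x_i)$ unfold to $(\eta/2\pi)(g(x_i) - g(x_{i-2}))$, and the left-shift case is identical up to a sign convention.

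Next, (ii) is a one-line algebraic identity at each $x_i$: the right-hand side $g'(x_i) h^{lsh}(x_i) + g^{rsh}(x_i) h'(x_i)$ expands to $(\eta/2\pi)[(g(x_{i+1})-g(x_{i-1}))h(x_{i+1}) + g(x_{i-1})(h(x_{i+1})-h(x_{i-1}))]$, whose cross term $\pm g(x_{i-1})h(x_{i+1})$ cancels, leaving exactly $(\eta/2\pi)(g(x_{i+1})h(x_{i+1}) - g(x_{i-1})h(x_{i-1})) = (gh)'(x_i)$. For (iii), I apply (ii) to the pair $(g, h^{rsh})$, obtaining $(g\,h^{rsh})' = g'\,(h^{rsh})^{lsh} + g^{rsh}\,(h^{rsh})' = g'h + (gh')^{rsh}$, using the inverse relation $(h^{rsh})^{lsh} = h$, the identity $f^{rsh}k^{rsh} = (fk)^{rsh}$, and part (v) to move the derivative past the shift. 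Integrating and invoking (i) on the left and (iv) on the shifted term on the right gives $0 = \int g'h\,d\mu_{\eta} + \int gh'\,d\mu_{\eta}$. Finally, (vi) follows by iterating (iii) twice: $\int g''h\,d\mu_{\eta} = -\int g'h'\,d\mu_{\eta} = \int gh''\,d\mu_{\eta}$.

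The main obstacle is purely bookkeeping: one has to verify that the piecewise definitions at $i=0$ and $i=2\eta-1$ in Definition \ref{partials} genuinely glue into the cyclic conventions invoked tacitly in each of the above manipulations (products of shifts, shifts of derivatives, telescoping of centred differences), so that no hidden boundary term leaks in. Once the identification with indexing on the cyclic group of order $2\eta$ is made, no analytic input is required and the whole lemma reduces to elementary cyclic-sum algebra that can be transferred from the finite case as indicated in \cite{adv}, Chapter 6.
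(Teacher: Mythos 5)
Your proposal is correct and follows essentially the same route as the paper: verify the boundary clauses in Definition \ref{partials} reduce to cyclic indexing modulo $2\eta$, dispatch (i), (iv), (v) by telescoping/reindexing, prove (ii) pointwise by inserting and cancelling the cross term $g(x_{i-1})h(x_{i+1})$, obtain (iii) by applying (ii) and shift/derivative commutation to the pair $(g,h^{rsh})$ and integrating, and get (vi) by iterating (iii). The only cosmetic difference is that the paper extracts an intermediate identity $\int g'h^{lsh}\,d\mu_\eta=-\int g^{rsh}h'\,d\mu_\eta$ and then substitutes $h^{rsh}$ for $h$, whereas you substitute first and then integrate; the underlying manipulation is identical.
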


\begin{proof}
 In the first part, for $(i)$, we have, using Definition \ref{partials}, that;\\

$\int_{\overline{\mathcal{V}_{\eta}}}g'(y)d\mu_{\eta}(y)$\\

$={\pi\over\eta}[{^{*}}\sum_{1\leq j\leq 2\eta-2}{\eta\over 2\pi}[g(-\pi+\pi({j+1\over\eta}))-g(-\pi+\pi({j-1\over\eta}))]$\\

$+{\eta\over 2\pi}[g(-\pi+{\pi\over\eta})-g(\pi-{\pi\over\eta})]+{\eta\over 2\pi}[g(-\pi)-g(\pi-2{\pi\over\eta})]]=0$\\

For $(ii)$, we calculate;\\

$(gh)'(-\pi+\pi{j\over\eta})=$\\

$={\eta\over 2\pi}(gh(-\pi+\pi{j+1\over\eta})-gh(-\pi+\pi{j-1\over\eta}))$\\

$={\eta\over 2\pi}(gh(-\pi+\pi{j+1\over\eta})-g(-\pi+\pi{j-1\over\eta})h(-\pi+\pi{j+1\over\eta})$\\

$+g(-\pi+\pi{j-1\over\eta})h(-\pi+\pi{j+1\over\eta})-gh(-\pi+\pi{j-1\over\eta}))$\\

$=g'(-\pi+\pi{j\over\eta})h(-\pi+\pi{j+1\over\eta})+g(-\pi+\pi{j-1\over\eta})h'(-\pi+\pi{j\over\eta})$\\

$=(g'h^{lsh}+g^{rsh}h')(-\pi+\pi{j\over \eta})$\\

Combining $(i),(ii)$, we have;\\

$0=\int_{\overline{\mathcal{V}_{\eta}}}(gh)'(x)d\mu_{\eta}(x)$\\

$=\int_{\overline{\mathcal{V}_{\eta}}}(g'h^{lsh}+g^{rsh}h')(x)d\mu_{\eta}(x)$\\

and, rearranging, that;\\

$\int_{\overline{\mathcal{V}_{\eta}}}(g'h^{lsh})d\mu_{\eta}=-\int_{\overline{\mathcal{V}_{\eta}}}(g^{rsh}h')d\mu_{\eta}$\\

For $(iv)$, we have that;\\

$\int_{\overline{\mathcal{V}_{\eta}}}g^{rsh}(y)d\mu_{\eta}(y)$\\

$={\pi\over \eta}({^{*}\sum}_{0\leq j\leq 2\eta-1}g^{rsh}(-\pi+\pi{j\over\eta}))$\\

$={\pi\over\eta}({^{*}\sum}_{1\leq j\leq 2\eta-2}g(-\pi+\pi{j-1\over\eta})+g(\pi-{\pi\over\eta}))$\\

$={\pi\over\eta}({^{*}\sum}_{0\leq j\leq 2\eta-1}g(-\pi+\pi{j\over\eta})$\\

$=\int_{\overline{\mathcal{V}_{\eta}}}g(y)d\mu_{\eta}(y)$\\

A similar calculation holds with $g^{lsh}$. For $(v)$, we have for $2\leq j\leq 2\eta-2$;\\

$(g')^{rsh}(-\pi+\pi{j\over\eta})$\\

$=g'(-\pi+\pi{j-1\over \eta})$\\

$={\eta\over 2\pi}(g(-\pi+\pi{j\over \eta})-g(-\pi+\pi{j-2\over \eta}))$\\

$(g^{rsh})'(-\pi+\pi{j\over\eta})$\\

$={\eta\over 2\pi}(g^{rsh}(-\pi+\pi{j+1\over \eta})-g^{rsh}(-\pi+\pi{j-1\over \eta}))$\\

$={\eta\over 2\pi}(g(-\pi+\pi{j\over \eta})-g(-\pi+\pi{j-2\over \eta}))$\\

Similar calculations hold for the remaining $j$ to give that $(g')^{rsh}=(g^{rsh})'$, and the calculation $(g')^{lsh}=(g^{lsh})'$ is also similar.\\

It follows that;\\

$\int_{\overline{\mathcal{V}_{\eta}}}(g'h)d\mu_{\eta}$\\

$=\int_{\overline{\mathcal{V}_{\eta}}}(g'(h^{rsh})^{lsh})d\mu_{\eta}$\\

$=-\int_{\overline{\mathcal{V}_{\eta}}}(g^{rsh}(h^{rsh})')d\mu_{\eta}$\\

$=-\int_{\overline{\mathcal{V}_{\eta}}}(g^{rsh}(h')^{rsh})d\mu_{\eta}$\\

$=-\int_{\overline{\mathcal{V}_{\eta}}}(gh'))d\mu_{\eta}$\\

which gives $(iii)$, using $(iv),(v)$. The calculation $(vi)$ is then immediate from $(iii)$.\\

\end{proof}

\begin{lemma}
\label{simpartials}
Similar results to Lemma \ref{discretederivativeshift} hold  for $\{lsh_{x},rsh_{x},{\partial \over \partial x},{\partial \over \partial t}\}$. Namely, if $g,h:\overline{\mathcal{S}_{\eta,\nu}}\rightarrow{^{*}\mathcal{C}}$ are measurable. Then;\\

$(i)$. $\int_{\overline{\mathcal{S}_{\eta,\nu}}}{\partial g\over \partial x}d(\mu_{\eta}\times\lambda_{\nu})=0$\\

$(ii)$. ${\partial gh\over \partial x}={\partial g\over \partial x}h^{lsh_{x}}+g^{rsh_{x}}{\partial h\over \partial x}$\\

$(iii)$. $\int_{\overline{\mathcal{S}_{\eta,\nu}}}{\partial g\over \partial x}hd(\mu_{\eta}\times\lambda_{\nu})=-\int_{\overline{\mathcal{S}_{\eta,\nu}}}g{\partial h\over \partial x}d(\mu_{\eta}\times\lambda_{\nu})$\\

$(iv)$. $\int_{\overline{\mathcal{S}_{\eta,\nu}}}gd(\mu_{\eta}\times\lambda_{\nu})=\int_{\overline{\mathcal{S}_{\eta,\nu}}}g^{lsh_{x}}d(\mu_{\eta}\times\lambda_{\nu})=\int_{\overline{\mathcal{S}_{\eta,\nu}}}g^{rsh_{x}}d(\mu_{\eta}\times\lambda_{\nu})$\\

$(v)$. $({\partial g\over \partial x})^{lsh_{x}}={\partial (g^{lsh_{x}})\over \partial x}$, and, similarly, with $rsh_{x}$ replacing $lsh_{x}$. \\

$(vi)$. $\int_{\overline{\mathcal{S}_{\eta,\nu}}}({\partial^{2} g\over \partial x^{2}}h)d(\mu_{\eta}\times\lambda_{\nu})=\int_{\overline{\mathcal{S}_{\eta,\nu}}}(g{\partial^{2} h\over \partial x^{2}})d(\mu_{\eta}\times\lambda_{\nu})$ $(*)$\\

\end{lemma}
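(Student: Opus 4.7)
My plan is to reduce the two-variable statement to the one-variable statement of Lemma \ref{discretederivativeshift} by slicing in the time coordinate and then integrating back via a hyperfinite Fubini argument. By construction of the partial derivatives and shifts in Definition \ref{partials}, the operators $\{\partial/\partial x, lsh_x, rsh_x\}$ act only on the $x$-variable, so for any fixed $t_0 \in \overline{\mathcal{T}_\nu}$ the slice function $g_{t_0}(x) := g(x,t_0)$ is measurable on $\overline{\mathcal{V}_\eta}$ (by Lemma \ref{rmkpartials} or a direct check), and one has the pointwise identities
\[
\tfrac{\partial g}{\partial x}(x,t_0) = (g_{t_0})'(x), \qquad g^{lsh_x}(x,t_0) = (g_{t_0})^{lsh}(x), \qquad g^{rsh_x}(x,t_0) = (g_{t_0})^{rsh}(x).
\]

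The pointwise parts, namely $(ii)$ and $(v)$, then follow immediately by fixing $t_0$ and invoking parts $(ii)$ and $(v)$ of Lemma \ref{discretederivativeshift} applied to $g_{t_0}$ and $h_{t_0}$. For the integral identities $(i),(iii),(iv),(vi)$, I would write the product integral as a hyperfinite iterated sum,
\[
\int_{\overline{\mathcal{S}_{\eta,\nu}}} F \, d(\mu_\eta \times \lambda_\nu) \;=\; {^{*}\sum}_{0\le k\le \nu^{2}-1} \tfrac{1}{\nu} \int_{\overline{\mathcal{V}_\eta}} F_{k/\nu}(x)\, d\mu_\eta(x),
\]
which is valid for any internal measurable $F$ because the hyperfinite sum commutes by transfer of the ordinary Fubini theorem for counting measures on finite products. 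Applying the corresponding one-variable identity of Lemma \ref{discretederivativeshift} inside the inner integral (for each fixed $k$) and then summing over $k$ yields each of $(i),(iii),(iv),(vi)$; for instance, $(iii)$ reads
\[
{^{*}\sum}_{k} \tfrac{1}{\nu}\int_{\overline{\mathcal{V}_\eta}} (g_{k/\nu})'\, h_{k/\nu}\, d\mu_\eta \;=\; -\,{^{*}\sum}_{k} \tfrac{1}{\nu}\int_{\overline{\mathcal{V}_\eta}} g_{k/\nu}\, (h_{k/\nu})'\, d\mu_\eta,
\]
which is exactly the desired equality once the partials and shifts are rewritten in terms of $\partial/\partial x$, $lsh_x$, $rsh_x$.

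The only step that needs genuine care rather than bookkeeping is the Fubini interchange, and here the fact that we are working with the internal $\ast$-finite product algebra $\mathcal{D}_\eta \times \mathcal{C}_\nu$ rather than a completed Loeb product makes life easy: the product measure is literally a hyperfinite counting measure, so transfer of the elementary finite Fubini theorem gives the interchange without any integrability hypothesis. Thus I do not anticipate a real obstacle; the proof is essentially "apply Lemma \ref{discretederivativeshift} slicewise and transfer Fubini," and I would simply remark this rather than redo the six computations, observing that the argument for the analogous statement with $\{\partial/\partial t, lsh_t, rsh_t\}$ is identical with the roles of the two coordinates swapped.
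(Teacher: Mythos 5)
Your proof is correct and follows essentially the same route as the paper's: both reduce to Lemma \ref{discretederivativeshift} by slicing in $t$, using the fact that $\{\partial/\partial x, lsh_x, rsh_x\}$ act only on the $x$-coordinate so that, for instance, $({\partial g/\partial x})_{t_0} = (g_{t_0})'$, and then applying hyperfinite Fubini (iterated hyperfinite summation) to recover the product integral. The paper carries this out explicitly only for $(i)$ and then declares $(ii)$--$(iv)$ "similar, relying on $(i)$," obtains $(v)$ directly from Definition \ref{partials}, and derives $(vi)$ by rerunning the integration-by-parts argument of $(iii)$ together with $(v)$ inside the two-variable setting; you instead apply each of the one-variable identities $(i)$--$(vi)$ slicewise and sum, which is slightly cleaner bookkeeping and avoids re-deriving $(vi)$ from $(iii)$ and $(v)$. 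Your remark that Fubini holds here without any integrability hypotheses because $\mu_\eta \times \lambda_\nu$ is a hyperfinite counting measure, so the interchange is just transfer of the finite Fubini theorem, is exactly the observation that justifies the paper's iterated-integral step in its proof of $(i)$.
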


\begin{proof}

For $(i)$, using $(i)$ from the argument in Lemma \ref{discretederivativeshift}, we have;\\

$\int_{\overline{\mathcal{S}_{\eta,\nu}}}{\partial g\over \partial x}d(\mu_{\eta}\times\lambda_{\nu})$\\

$=\int_{\overline{\mathcal{V}_{\eta}}}(\int_{\overline{\mathcal{T}_{\nu}}}({\partial g\over \partial x})_{t}d\mu_{\eta})d\lambda_{\nu}(t)$\\

$=\int_{\overline{\mathcal{V}_{\eta}}}(\int_{\overline{\mathcal{T}_{\nu}}}({\partial g_{t}\over \partial x})d\mu_{\eta})d\lambda_{\nu}(t)$\\

$=\int_{\overline{\mathcal{T}_{\nu}}}0d\lambda_{\nu}(t)=0$\\

The proofs of $(ii),(iii),(iv)$ are similar to Lemma \ref{discretederivativeshift}, relying on the result of $(i)$. $(v)$ follows easily from Definitions \ref{partials} and $(vi)$ follows, repeating the result of $(iii)$, and applying $(v)$.\\

\end{proof}

\begin{defn}
\label{restriction}
If $\eta$ is even, we define a restriction $\overline{()}:\overline{\mathcal{V}_{\eta}}\rightarrow \overline{\mathcal{V}_{{\eta\over 2}}}$. Namely;\\

$\overline{f}(-\pi+\pi{2i\over\eta})=f(-\pi+\pi{2i\over\eta})$;\\

for $i\in{^{*}\mathcal{N}}_{0\leq i\leq \eta-1}$.\\
\end{defn}

\begin{lemma}
\label{restrictionderivative}
Let notation be as in Definitions \ref{restriction} and \ref{partials}, then;\\

$\overline{f'}(-\pi+\pi{2 i\over\eta})={\eta\over2\pi}(f(-\pi+\pi{2 i+1\over\eta})-f(-\pi+\pi{2 i-1\over\eta}))$;\\

for $i\in{^{*}\mathcal{N}}_{1\leq i\leq \eta-1}$.\\

$\overline{f'}(-\pi)={\eta\over 2\pi}(f(-\pi+{\pi\over\eta})-f(\pi-{\pi\over\eta}))$\\

and;\\

$\overline{f^{lsh}}(-\pi+\pi{2 j\over \eta})=f(-\pi+\pi{2j+1\over\eta})$ for $0\leq j\leq \eta-1$\\

$\overline{f^{rsh}}(-\pi+\pi{2 j\over\eta})=f(-\pi+\pi{2 j-1\over\eta})$ for $1\leq j\leq \eta-1$\\

$\overline{f^{rsh}}(-\pi)=f(\pi-{\pi\over\eta})$\\

\end{lemma}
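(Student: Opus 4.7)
The plan is to prove this lemma by direct unpacking of the definitions: there is no real content beyond substituting the formulas from Definition \ref{partials} into the restriction map of Definition \ref{restriction} and verifying that the restricted indices still lie within the range where the ``interior'' formula applies (with only the two boundary points of $\overline{\mathcal{V}_{\eta/2}}$ requiring separate treatment).

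First, I would handle $\overline{f'}$. By Definition \ref{restriction}, $\overline{f'}(-\pi + \pi\tfrac{2i}{\eta}) = f'(-\pi + \pi\tfrac{2i}{\eta})$ for $0 \leq i \leq \eta - 1$. For $1 \leq i \leq \eta - 1$ the index $j = 2i$ satisfies $1 \leq j \leq 2\eta - 2$, so the interior case of Definition \ref{partials} applies and gives precisely $\tfrac{\eta}{2\pi}(f(-\pi+\pi\tfrac{2i+1}{\eta}) - f(-\pi+\pi\tfrac{2i-1}{\eta}))$, as claimed. For $i = 0$ we are at the boundary point $-\pi$, and the special boundary case in Definition \ref{partials} gives $f'(-\pi) = \tfrac{\eta}{2\pi}(f(-\pi+\tfrac{\pi}{\eta}) - f(\pi - \tfrac{\pi}{\eta}))$, again matching the claim. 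Note that the point $-\pi + \pi \cdot \tfrac{2\eta}{\eta} = \pi$ is outside $\overline{\mathcal{V}_{\eta/2}}$ (cyclically identified with $-\pi$), so no further case arises.

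For the shift claims, the argument is entirely parallel. For $\overline{f^{lsh}}$, evaluation at $-\pi + \pi\tfrac{2j}{\eta}$ with $0 \leq j \leq \eta-1$ uses indices $2j$ in the range $[0, 2\eta - 2]$ where the interior formula of Definition \ref{partials} applies, giving $f(-\pi + \pi\tfrac{2j+1}{\eta})$. For $\overline{f^{rsh}}$, at $-\pi + \pi\tfrac{2j}{\eta}$ with $1 \leq j \leq \eta - 1$ the indices $2j$ lie in $[2, 2\eta-2] \subseteq [1, 2\eta-1]$, so the interior formula gives $f(-\pi + \pi\tfrac{2j-1}{\eta})$; and at the boundary point $-\pi$ the special case in Definition \ref{partials} yields $f^{rsh}(-\pi) = f(\pi - \tfrac{\pi}{\eta})$, matching the claim.

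There is no real obstacle here: the entire content is a book-keeping check that the restriction to the coarser lattice $\overline{\mathcal{V}_{\eta/2}}$ interacts cleanly with the three-point stencils defining $f'$, $f^{lsh}$, and $f^{rsh}$. The only points requiring any attention are the two endpoints $-\pi$ and (implicitly) the ``wrap-around'' at $\pi - \tfrac{\pi}{\eta/2} = \pi - \tfrac{2\pi}{\eta}$, which for $\overline{f'}$ and $\overline{f^{rsh}}$ still sit at even indices of the finer lattice and hence use the interior formulas. Since the whole argument proceeds by transfer of the finite-lattice definitions, I would simply present the three computations above in sequence without further commentary.
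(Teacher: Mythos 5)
Your proof is correct and takes exactly the same route as the paper, which simply asserts that the lemma is "an immediate consequence of Definitions \ref{restriction} and \ref{partials}"; you have merely spelled out the (purely bookkeeping) index-range checks. The only nit: for $1\le i\le\eta-1$ the index $j=2i$ actually ranges over $2\le j\le 2\eta-2$, not $1\le j\le 2\eta-2$, though this does not affect the argument since both lie in the interior range.
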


\begin{proof}
The proof is an immediate consequence of Definitions \ref{restriction} and \ref{partials}

\end{proof}

\begin{rmk}
\label{nequal}
It is important to note that, in general $\overline{f'}\neq \overline{f}'$ and, similarly, for $lsh, rsh$.

\end{rmk}

\begin{lemma}
\label{restrictionfacts}
Let $\{g,h\}\subset V(\overline{\mathcal{V}_{\eta}})$ be measurable, then;\\

$(i)$. $\int_{\overline{\mathcal{V}_{{\eta\over 2}}}}\overline{g'}(y)d\mu_{{\eta\over 2}}(y)=0$\\

$(ii)$. $\overline{(gh)'}=\overline{g'}\overline{h^{lsh}}+\overline{g^{rsh}}\overline{h'}$\\

$(iii)$. $\int_{\overline{\mathcal{V}_{{\eta\over 2}}}}\overline{(g'h)}(y)d\mu_{{\eta\over 2}}(y)=-\int_{\overline{\mathcal{V}_{{\eta\over 2}}}}\overline{g^{rsh}(h')^{rsh}}d\mu_{\eta}(y)$\\

$(iv)$. $\int_{\overline{\mathcal{V}_{{\eta\over 2}}}}\overline{g^{rsh^{2}}}(y)d\mu_{{\eta\over 2}}(y)=\int_{\overline{\mathcal{V}_{{\eta\over 2}}}}\overline{g}(y)d\mu_{\eta}(y)$\\

\end{lemma}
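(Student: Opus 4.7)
The plan is to reduce each of (i)--(iv) to the corresponding statement in Lemma \ref{discretederivativeshift}, using the explicit pointwise formulas in Lemma \ref{restrictionderivative} together with the observation that the bar operator is simply pointwise restriction of measurable functions on $\overline{\mathcal{V}_{\eta}}$ to the even-indexed grid $\overline{\mathcal{V}_{\eta/2}}$, so it commutes with pointwise products.

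For (i), I write out the integral explicitly as $\tfrac{2\pi}{\eta}\sum_{i=0}^{\eta-1}\overline{g'}(-\pi+2\pi i/\eta)$ and substitute the formulas from Lemma \ref{restrictionderivative}. The sum telescopes over the odd-indexed values $g(-\pi+\pi(2i\pm 1)/\eta)$; the boundary case $i=0$ (which picks up $g(-\pi+\pi/\eta)-g(\pi-\pi/\eta)$) closes the cycle and kills the remaining endpoints, giving $0$. The main point to check carefully is that the wrap-around at $-\pi$ matches the jumps at $i=1$ and $i=\eta-1$; this is essentially what the second clause of Lemma \ref{restrictionderivative} is arranged to do. For (ii), I evaluate both sides at $-\pi+2\pi i/\eta$. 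By Lemma \ref{restrictionderivative}, $\overline{(gh)'}$, $\overline{g'}$, $\overline{h'}$, $\overline{h^{lsh}}$ and $\overline{g^{rsh}}$ at even-indexed points agree with $(gh)',g',h',h^{lsh},g^{rsh}$ at the same points, so (ii) is just Lemma \ref{discretederivativeshift}(ii) restricted to even indices.

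For (iii), I mimic the argument used in Lemma \ref{discretederivativeshift}(iii): apply (i) to the product $g h^{rsh}$ in place of $g h$ and expand using (ii), obtaining
\[
0 \;=\; \int_{\overline{\mathcal{V}_{\eta/2}}}\overline{g'}\,\overline{(h^{rsh})^{lsh}}\,d\mu_{\eta/2} \;+\; \int_{\overline{\mathcal{V}_{\eta/2}}}\overline{g^{rsh}}\,\overline{(h^{rsh})'}\,d\mu_{\eta/2}.
\]
A direct check from Definition \ref{partials} shows $(h^{rsh})^{lsh}=h$ pointwise, and Lemma \ref{discretederivativeshift}(v) gives $(h^{rsh})'=(h')^{rsh}$; combining with the fact that restriction commutes with products turns the first integral into $\int\overline{g' h}\,d\mu_{\eta/2}$ and the second into $\int\overline{g^{rsh}(h')^{rsh}}\,d\mu_{\eta/2}$, yielding (iii). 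For (iv), I observe directly from Definition \ref{partials} and \ref{restriction} that, at an even-indexed point $-\pi+2\pi i/\eta$, $g^{rsh^{2}}$ equals $g(-\pi+2\pi(i-1)/\eta)$, which is again at an even index; hence $\overline{g^{rsh^{2}}}$ coincides with $(\overline{g})^{rsh}$ regarded on $\overline{\mathcal{V}_{\eta/2}}$. Applying Lemma \ref{discretederivativeshift}(iv) on the smaller space then gives the identity (assuming, as the formulation requires, that $d\mu_{\eta}$ on the right-hand side of (iv) is a typographical slip for $d\mu_{\eta/2}$).

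The only genuinely delicate step is (i): one must be confident that the cyclic telescoping in $\overline{\mathcal{V}_{\eta/2}}$ really closes, since the restriction bar skips every other point and the two boundary clauses of Lemma \ref{restrictionderivative} have to account exactly for the missing endpoints. Once this is verified, (ii), (iii) and (iv) are straightforward translations of the corresponding identities on $\overline{\mathcal{V}_{\eta}}$, keeping in mind Remark \ref{nequal} that one may not freely swap $\overline{f'}$ with $\overline{f}'$ or $\overline{f^{lsh}}$ with $\overline{f}^{\,lsh}$, which is precisely why the statement of (ii) involves $\overline{g^{rsh}}$ and $\overline{h^{lsh}}$ rather than their bar-free counterparts.
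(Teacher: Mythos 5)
Your proof is correct and follows essentially the same route as the paper: the cyclic telescoping argument for (i), pointwise restriction to even indices combined with Lemma \ref{discretederivativeshift}(ii) for (ii), integrating the product rule applied to $g\,h^{rsh}$ and invoking $(h^{rsh})^{lsh}=h$ together with Lemma \ref{discretederivativeshift}(v) for (iii), and the identification $\overline{g^{rsh^{2}}}=(\overline{g})^{rsh}$ on $\overline{\mathcal{V}_{\eta/2}}$ for (iv). You also correctly flag that the $d\mu_{\eta}$ appearing on the right-hand sides of (iii) and (iv) in the statement is a typographical slip for $d\mu_{\eta/2}$.
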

\begin{proof}
For $(i)$, we have that;\\

 $\int_{\overline{\mathcal{V}_{{\eta\over 2}}}}\overline{g'}(y)d\mu_{{\eta\over 2}}(y)$\\

$={2\pi\over\eta}[{^{*}}\sum_{1\leq j\leq \eta-1}{\eta\over 2\pi}[g(-\pi+\pi({2j+1\over\eta}))-g(-\pi+\pi({2j-1\over\eta}))]$\\

$+{\eta\over 2\pi}[g(-\pi+{\pi\over\eta})-g(\pi-{\pi\over\eta})]]=0$\\

$(ii)$ is clear from the main proof and taking restrictions.\\

For $(iii)$, integrating both sides of $(ii)$ and using $(i)$, we have that;\\

$\int_{\overline{\mathcal{V}_{{\eta\over 2}}}}\overline{g'h^{lsh}}d\mu_{{\eta\over 2}}(y)=-\int_{\overline{\mathcal{V}_{{\eta\over 2}}}}\overline{g^{rsh}h'}d\mu_{{\eta\over 2}}(y)$ $(*)$\\

Then;\\

$\int_{\overline{\mathcal{V}_{{\eta\over 2}}}}\overline{g'h}d\mu_{{\eta\over 2}}(y)$\\

$=\int_{\overline{\mathcal{V}_{{\eta\over 2}}}}\overline{g'(h^{rsh})^{lsh}}d\mu_{{\eta\over 2}}(y)$\\

$=-\int_{\overline{\mathcal{V}_{{\eta\over 2}}}}\overline{g^{rsh}(h^{rsh})'}d\mu_{{\eta\over 2}}(y)$ by $(*)$\\

$=-\int_{\overline{\mathcal{V}_{{\eta\over 2}}}}\overline{g^{rsh}(h')^{rsh}}d\mu_{{\eta\over 2}}(y)$ by the main proof\\

$(iv)$ is a simple calculation, using Definitions \ref{restriction} and \ref{partials}.

\end{proof}

\begin{lemma}
\label{nsheat}
Given a measurable boundary conditions $f\in V(\overline{\mathcal{V}_{\eta}})$, there exists a unique measurable $F\in V(\overline{\mathcal{S}_{\eta,\nu}})$, satisfying the nonstandard heat equation;\\

${\partial F\over\partial t}={\partial^{2}F\over\partial x^{2}}$\\

on $({\overline{\mathcal{T}_{\nu}}\setminus [\nu-{1\over\nu},\nu)})\times\overline{\mathcal{V}_{\eta}}$\\

with $F(0,x)=f(x)$, for $x\in\overline{\mathcal{V}_{\eta}}$, $(*)$.\\

Moreover, if $\eta\leq\sqrt{2\nu}\pi$, and, there exists $M\in\mathcal{R}$, with $max\{f,f',f''\}\leq M$, then $max\{F,{\partial F\over\partial x},{\partial^{2} F\over\partial x^{2}}\}\leq M$.

\end{lemma}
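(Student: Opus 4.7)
The plan is to proceed in three stages. First I would expand the equation pointwise using Definition \ref{partials}. Evaluating at $(x_{i},j/\nu)=(-\pi+\pi i/\eta,j/\nu)$, the identity ${\partial F/\partial t}={\partial^{2}F/\partial x^{2}}$ becomes
\begin{equation*}
\nu(F_{(j+1)/\nu}(x_{i})-F_{j/\nu}(x_{i}))={\eta^{2}\over 4\pi^{2}}(F_{j/\nu}(x_{i+2})-2F_{j/\nu}(x_{i})+F_{j/\nu}(x_{i-2})),
\end{equation*}
with indices taken mod $2\eta$ via the wrap-around convention of Definition \ref{partials}. Writing $\alpha=\eta^{2}/(4\pi^{2}\nu)$, this rearranges to the explicit time-step
\begin{equation*}
F_{(j+1)/\nu}(x_{i})=\alpha F_{j/\nu}(x_{i+2})+(1-2\alpha)F_{j/\nu}(x_{i})+\alpha F_{j/\nu}(x_{i-2}).
\end{equation*}

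Second, for existence and uniqueness, the displayed identity exhibits $F_{(j+1)/\nu}$ as an internal $*$-finite linear combination of shifts of $F_{j/\nu}$. Once $F_{0}=f$ is prescribed, $F$ is therefore uniquely determined by hyperfinite induction on $0\leq j\leq\nu^{2}-1$, and measurability of each layer follows from Lemma \ref{rmkpartials}. The recursion is an internal scheme, so the whole construction is the transfer of the finite forward-Euler scheme, exactly as in Lemma \ref{nonstandard1}.

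Third, for the bounds, the hypothesis $\eta\leq\sqrt{2\nu}\pi$ is equivalent to $\alpha\leq 1/2$, so the three coefficients $\alpha,1-2\alpha,\alpha$ are nonnegative and sum to one; hence each $F_{(j+1)/\nu}(x_{i})$ is a genuine convex combination of three values of $F_{j/\nu}$. A trivial internal induction on $j$ then yields $\sup_{x}|F_{j/\nu}(x)|\leq \sup_{x}|f(x)|\leq M$. The key further observation is that $\partial/\partial x$ and $\partial^{2}/\partial x^{2}$ are $*$-linear combinations of shifts which commute with one another and with the one-step time-evolution operator defined above. Consequently ${\partial F/\partial x}$ and ${\partial^{2}F/\partial x^{2}}$ satisfy the same three-point convex-combination recursion, but with initial data $f'$ and $f''$ respectively; the same induction then delivers $|{\partial F/\partial x}|\leq M$ and $|{\partial^{2}F/\partial x^{2}}|\leq M$.

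The main subtlety is the commutation of shift and difference operators across the periodic boundary: this is precisely what Lemma \ref{discretederivativeshift}(v) guarantees for $\partial/\partial x$, and iterating it (together with the linearity inherited from shifts) gives it for $\partial^{2}/\partial x^{2}$. Once this commutation is in hand, the whole argument reduces to a routine hyperfinite induction, with the stability threshold $\alpha\leq 1/2$ being the single quantitative input needed to turn the stencil into a probability convex combination.
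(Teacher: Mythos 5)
Your proposal is correct and follows essentially the same route as the paper: rewrite the equation as an explicit forward-Euler time-step, note that $\eta\leq\sqrt{2\nu}\pi$ makes the stencil a convex combination, and propagate the bound by hyperfinite induction, with the derivative bounds following because the spatial difference operators (being $*$-linear combinations of shifts) commute with the one-step evolution. The paper compresses this last point to ``differentiate $(*)$ and replace $F$ with ${\partial F\over\partial x}$''; your explicit appeal to Lemma \ref{discretederivativeshift}(v) to justify the commutation across the periodic boundary is a welcome clarification but not a different argument.
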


\begin{proof}
Observe that, by Definition \ref{partials}, if $F:\overline{\mathcal{S}_{\eta,\nu}}\rightarrow{^{*}\mathcal{C}}$ is measurable, then;\\

${\partial^{2}F\over\partial x^{2}}(-\pi+\pi{i\over\eta},t)={\eta^{2}\over 4\pi^{2}}(F(-\pi+\pi{i+2\over\eta},t)-2F(-\pi+\pi{i\over\eta},t)+F(-\pi+\pi{i-2\over\eta},t))$\\

$(2\leq i\leq 2\eta-3), t\in\overline{\mathcal{T}_{\nu}}$, with similar results for the remaining $i$.\\

Therefore, if $F$ satisfies $(*)$, we must have;\\

$F(0,x)=f(x)$, $(x\in\overline{\mathcal{V}_{\eta}})$\\

$F({i+1\over\nu},-\pi+\pi{j\over\eta})$\\

$=F({i\over\nu},-\pi+\pi{j\over\eta})+{\eta^{2}\over 4\pi^{2}\nu}(F({i\over\nu},-\pi+\pi{j+2\over\eta})-2F({i\over\nu},-\pi+\pi{j\over\eta})+F({i\over\nu},-\pi+\pi{j-2\over\eta}))$\\

$={\eta^{2}\over 4\pi^{2}\nu}F({i\over\nu},-\pi+\pi{j+2\over\eta})+(1-{\eta^{2}\over 2\pi^{2}\nu})(F({i\over\nu},-\pi+\pi{j\over\eta})+{\eta^{2}\over 4\pi^{2}\nu}F({i\over\nu},-\pi+\pi{j-2\over\eta}))$ $(*)$\\

$(1\leq i\leq \nu^{2}-2,0\leq j\leq 2\eta-1)$\\

The choice of $\eta$ ensures that $1-{\eta^{2}\over 2\pi^{2}\nu}\geq 0$. Hence, inductively, if $|F_{i\over\nu}|\leq M$, then, by $(*)$;\\

$|F_{i+1\over\nu}|\leq M({\eta^{2}\over 4\pi^{2}\nu}+(1-{\eta^{2}\over 2\pi^{2}\nu})+{\eta^{2}\over 4\pi^{2}\nu})=M$.\\

We can differentiate $(*)$ and replace $F$ with ${\partial F\over\partial x}$ or ${\partial^{2} F\over\partial x^{2}}$. The same argument, and the assumption on the initial conditions, gives the required bound.

\end{proof}

\begin{lemma}
\label{initialbounded}
If $f\in C^{\infty}[-\pi,\pi]$, and $f_{\eta}$ is defined on $\overline{\mathcal{V}_{\eta}}$ by;\\

$f_{\eta}(-\pi+\pi{j\over\eta})=f^{*}(-\pi+\pi{j\over\eta})$\\

$f_{\eta}(x)=f(-\pi+{\pi\over\eta}[{\eta(x+\pi)\over \pi}])$\\

where $f^{*}$ is the transfer of $f$ to ${^{*}[-\pi,\pi)}$, then there exists a constant $M\in\mathcal{R}$, such that $max\{f_{\eta},f_{\eta}',f_{\eta}''\}\leq M$. In particular, if $F$ solves the nonstandard heat equation, with initial condition $f_{\eta}$, then, $max\{F,{\partial F\over\partial x},{\partial^{2} F\over\partial x^{2}}\}\leq M$ as well.

\end{lemma}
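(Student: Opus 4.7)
The plan is to reduce the bound on $\{f_\eta, f_\eta', f_\eta''\}$ to a standard bound on $\{f, f', f''\}$ via transfer, and then invoke Lemma \ref{nsheat} for the conclusion about $F$. First, since $f\in C^\infty[-\pi,\pi]$ comes by pullback from $g\in C^\infty(S^1)$, the functions $f,f',f''$ are continuous and periodic on the compact interval $[-\pi,\pi]$, hence there exists $M\in\mathcal{R}$ with $\max\{|f|,|f'|,|f''|\}\leq M$ on $[-\pi,\pi]$. By transfer, $\max\{|f^*|,|(f')^*|,|(f'')^*|\}\leq M$ on ${}^*[-\pi,\pi]$.

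Next I would bound the discrete derivatives at grid points. For $|f_\eta|$, the bound is immediate since $f_\eta$ simply samples $f^*$. For $|f_\eta'|$, Definition \ref{partials} gives, for interior grid indices $1\leq i\leq 2\eta-2$,
\[
f_\eta'\bigl(-\pi+\tfrac{\pi i}{\eta}\bigr)=\tfrac{\eta}{2\pi}\bigl(f^*(-\pi+\tfrac{\pi(i+1)}{\eta})-f^*(-\pi+\tfrac{\pi(i-1)}{\eta})\bigr).
\]
By the transfer of the mean value theorem, this equals $(f')^*(\xi)$ for some $\xi$ in the enclosing interval, so $|f_\eta'|\leq M$. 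For $|f_\eta''|$, a direct computation from Definition \ref{partials} shows
\[
f_\eta''\bigl(-\pi+\tfrac{\pi i}{\eta}\bigr)=\tfrac{\eta^2}{4\pi^2}\bigl(f^*(-\pi+\tfrac{\pi(i+2)}{\eta})-2f^*(-\pi+\tfrac{\pi i}{\eta})+f^*(-\pi+\tfrac{\pi(i-2)}{\eta})\bigr),
\]
and the transferred symmetric-second-difference identity (valid for $C^2$ functions) expresses this as $(f'')^*(\xi')$ for some intermediate $\xi'$, hence is bounded by $M$.

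The boundary indices in Definition \ref{partials} use the cyclic wrap-around between $-\pi$ and $\pi-\pi/\eta$; here the same estimates apply because $f$, arising as the pullback of a function on $S^1$, satisfies $f(-\pi)=f(\pi)$ and, after the transfer, the periodic continuation makes the wrap-around differences indistinguishable from the interior ones. The step-function extension of $f_\eta$ off the grid points does not affect any of these discrete derivative values, since $f_\eta'$ and $f_\eta''$ are defined pointwise in terms of grid values and then themselves extended as step functions.

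Finally, with the bound $\max\{f_\eta,f_\eta',f_\eta''\}\leq M$ in hand, the second assertion is an immediate application of Lemma \ref{nsheat} (taking the same $M$), which propagates the bound from the initial condition to $\{F,\partial F/\partial x,\partial^2 F/\partial x^2\}$ on the whole time strip. I expect the main technical obstacle to be only bookkeeping around the wrap-around boundary terms; the substantive content is the transferred Taylor estimate for the symmetric second difference, which is standard.
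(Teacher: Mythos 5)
Your proposal is correct and follows essentially the same approach as the paper: both arguments reduce the boundedness of the discrete derivatives to boundedness of the classical derivatives of $f$ by transferring a one-variable calculus identity, then invoke Lemma \ref{nsheat} to propagate the bound to $F$. The only difference is cosmetic: the paper uses a Taylor expansion with error term to show $(f_\eta)'\simeq (f')_\eta$ (and hence is bounded since $(f')_\eta$ is), whereas you use the mean value theorem and the generalized mean value theorem for the symmetric second difference to get exact representations $f_\eta'=(f')^*(\xi)$, $f_\eta''=(f'')^*(\xi')$, which is a touch cleaner and needs one fewer derivative of $f$, though this is immaterial since $f\in C^\infty$.
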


\begin{proof}
We have, for $x\in [-\pi,\pi)$, using Taylor's Theorem, that;\\

$|{1\over 2h}(f(x+h)-f(x-h))-f'(x)|$\\

$=|{1\over 2h}(f(x)+hf'(x)+{h^{2}\over 2}f''(c)-f(x)+hf'(x)-{h^{2}\over 2}f''(c'))-f'(x)|$\\

$\leq hK$\\

where $K=max_{[-\pi,\pi)}f''$. By transfer, it follows, that, for infinite $\eta$, $(f_{\eta})'\simeq (f')_{\eta}$. Clearly $(f')_{\eta}$ is bounded, as $f'$ is, which gives the result for $(f_{\eta})'$. The case for $(f_{\eta})''$ is similar. The final result is immediate from Lemma \ref{nsheat}.
\end{proof}

\begin{defn}
\label{nstransf}
We let $\overline{\mathcal{Z}_{\eta}}=\{m\in{{^{*}}\mathcal{Z}}:-\eta\leq m\leq \eta-1\}$ Given a measurable $f:\overline{\mathcal{V}_{\eta}}\rightarrow{^{*}\mathcal{C}}$, we define, for $m\in\mathcal{Z}_{\eta}$, the $m'th$ discrete Fourier coefficient to be;\\

$\hat{f}_{\eta}(m)={1\over 2\pi}\int_{\overline{\mathcal{V}_{\eta}}}f(y)exp_{\eta}(-iym)d\mu_{\eta}(y)$\\

\end{defn}

\begin{lemma}
\label{fourinversion}

Let hypotheses be as in Definition \ref{nstransf}, then;\\

$f(x)=\sum_{m\in\mathcal{Z}_{\eta}}\hat{f}_{\eta}(m)exp_{\eta}(ixm)$

\end{lemma}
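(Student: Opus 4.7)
The plan is to substitute the definition of $\hat{f}_\eta(m)$ into the right-hand side, interchange the $*$-finite sum with the internal integral (which is permissible by transfer since both are really hyperfinite sums), and reduce the identity to a discrete orthogonality relation for $\exp_\eta$ on the grid. Concretely, I would write
$$\sum_{m \in \overline{\mathcal{Z}_\eta}} \hat{f}_\eta(m)\, \exp_\eta(ixm) = \frac{1}{2\pi} \int_{\overline{\mathcal{V}_\eta}} f(y) \left[ \sum_{m \in \overline{\mathcal{Z}_\eta}} \exp_\eta(i(x-y)m) \right] d\mu_\eta(y),$$
and the whole identity would then reduce to evaluating the bracketed inner sum.

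The central computation is the orthogonality identity: for $x = -\pi + \pi j_x/\eta$ and $y = -\pi + \pi j_y/\eta$ with $j_x, j_y \in \{0,1,\ldots,2\eta-1\}$, I would show
$$\sum_{m = -\eta}^{\eta-1} \exp_\eta\bigl(i(x-y)m\bigr) = 2\eta \cdot \delta_{j_x, j_y}.$$
This follows by transfer from the classical finite statement: letting $\omega = e^{i\pi(j_x-j_y)/\eta}$, one has $\omega^{2\eta} = e^{2\pi i(j_x-j_y)} = 1$, so the geometric series $\omega^{-\eta}\sum_{m=0}^{2\eta-1}\omega^m$ either collapses to $2\eta$ (if $\omega = 1$) or to $0$. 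Since $|j_x - j_y| < 2\eta$, the case $\omega = 1$ forces $j_x = j_y$. This step is the only nontrivial ingredient and is essentially the classical finite-dimensional DFT inversion.

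Substituting the orthogonality identity back, only the grid cell containing $x$ contributes to the integral, giving
$$\sum_{m \in \overline{\mathcal{Z}_\eta}} \hat{f}_\eta(m)\, \exp_\eta(ixm) = \frac{1}{2\pi} \cdot 2\eta \cdot f(x) \cdot \mu_\eta\bigl(\bigl[-\pi + \tfrac{\pi j_x}{\eta},\, -\pi + \tfrac{\pi(j_x+1)}{\eta}\bigr)\bigr) = \frac{1}{2\pi} \cdot 2\eta \cdot f(x) \cdot \frac{\pi}{\eta} = f(x),$$
as required. The main (mild) obstacle is simply being precise about the cardinality match: $\overline{\mathcal{V}_\eta}$ has $2\eta$ cells and $\overline{\mathcal{Z}_\eta}$ has $2\eta$ frequencies, so the discrete transform is a square unitary (up to the normalising factor $\frac{1}{2\pi} \cdot \frac{\pi}{\eta} = \frac{1}{2\eta}$) internal linear map, and transfer of this fact from finite $n$ to infinite $\eta$ yields the stated inversion formula for every measurable $f \in V(\overline{\mathcal{V}_\eta})$.
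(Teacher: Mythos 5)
Your proof is correct and complete, but it takes a genuinely different (and more self-contained) route than the paper. The paper's own proof is a one-line appeal to Lemma 5.9 of reference [adv] (de Piro's \emph{Advances in Nonstandard Analysis}, Chapter 5), which apparently establishes the discrete inversion formula on a normalised interval $\overline{\mathcal{S}_\eta}$ with measure $\lambda_\eta$; the paper then transports it to $\overline{\mathcal{V}_\eta} = {}^*[-\pi,\pi)$ via the scaling map $p(x) = x/\pi$, using $p_*(\mu_\eta) = \lambda_\eta$. You instead prove the inversion from scratch: substitute the definition of $\hat{f}_\eta(m)$, swap the hyperfinite sum with the hyperfinite integral, and reduce everything to the geometric-series orthogonality $\sum_{m=-\eta}^{\eta-1} \exp_\eta(i(x-y)m) = 2\eta\,\delta_{j_x,j_y}$ on the grid, which transfers from the finite DFT. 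Your bookkeeping is right: the $2\eta$ cells of $\overline{\mathcal{V}_\eta}$ match the $2\eta$ frequencies in $\overline{\mathcal{Z}_\eta}$, and the normalisations $\frac{1}{2\pi}\cdot\frac{\pi}{\eta}\cdot 2\eta = 1$ close the argument. What your approach buys is transparency and independence from the external reference — a reader can verify the whole inversion without access to [adv]. What the paper's approach buys is brevity and consistency with the machinery already set up in [adv], avoiding a re-derivation of the DFT orthogonality in the nonstandard setting. One small caveat: your argument implicitly assumes $\exp_\eta$ restricts to the grid points as the genuine complex exponential $e^{i\pi(j_x-j_y)m/\eta}$, which is the standard convention in this framework but not spelled out in this paper; you should flag that you are relying on that property (it is what makes the geometric-series collapse go through).
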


\begin{proof}

This is a simple transposition of Lemma 5.9 in \cite{adv}, Chapter 5. We have there that the measure on $\overline{\mathcal{S}_{\eta}}$ is $\lambda_{\eta}$. The result follows using the scalar map $p:\overline{\mathcal{V}_{\eta}}\rightarrow\overline{\mathcal{S}_{\eta}}$, $p(x)={x\over\pi}$, and the fact that $p_{*}(\mu_{\eta})=\lambda_{\eta}$\\

\end{proof}

\begin{defn}
\label{verthoriz}

Given a measurable $f:\overline{\mathcal{S}_{\eta,\nu}}\rightarrow{^{*}\mathcal{C}}$, we define the nonstandard vertical Fourier transform $\hat{f}:\overline{\mathcal{T}_{\nu}}\times\overline{\mathcal{Z}_{\eta}}\rightarrow{^{*}\mathcal{C}}$ by;\\

$\hat{f}(t,m)={1\over 2\pi}\int_{\overline{\mathcal{V}_{\eta}}}f(t,x)exp_{\eta}(-ixm)d\mu_{\eta}(x)$\\

and, given a measurable $g:\overline{\mathcal{T}_{\nu}}\times\overline{\mathcal{Z}_{\eta}}\rightarrow{^{*}\mathcal{C}}$, we define the nonstandard inverse vertical Fourier transform by;\\

$\check{g}(t,x)=\sum_{m\in\mathcal{Z}_{\eta}}g(t,m)exp_{\eta}(ixm)$\\

so that, by Lemma \ref{fourinversion}, $f=\check{\hat{f}}$\\

Similar to Definition 6.20 of \cite{adv}, Chapter 6, for $f\in\overline{\mathcal{V_{\eta}}}$, we let $\phi_{\eta}:\overline{\mathcal{Z}_{\eta}}\rightarrow{^{*}\mathcal{C}}$ be defined by;\\

$\phi_{\eta}(m)={\eta\over 2\pi}(exp_{\eta}(-im{\pi\over\eta})-exp_{\eta}(im{\pi\over\eta}))$\\

We let $\psi_{\eta}:\overline{\mathcal{Z}_{{\eta\over 2}}}\rightarrow{^{*}\mathcal{C}}$ be defined by;\\

$\psi_{\eta}(m)={\eta\over 2\pi}(1-exp_{\eta}(im{2\pi\over\eta}))$\\

and, we let $U_{\eta}:\overline{\mathcal{Z}_{{\eta\over 2}}}\rightarrow{^{*}\mathcal{C}}$ be defined by;\\

$U_{\eta}(m)=exp_{\eta}(-im{2\pi\over\eta}))$\\

\end{defn}

The following is the analogue of Lemma 5.14 in \cite{adv}, Chapter 5, using the definition of the discrete derivative in Definition \ref{partials} and the discrete Fourier coefficients from Definition \ref{nstransf};\\

\begin{lemma}
\label{sine}
Let $f:\overline{\mathcal{V}_{\eta}}\rightarrow{^{*}\mathcal{C}}$ be measurable; then, for $m\in\mathcal{Z}_{\eta}$,\\

$\hat{f''}(m)=\phi_{\eta}^{2}(m)\hat{f}(m)$\\

\end{lemma}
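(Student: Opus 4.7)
The plan is to exploit the fact that the discrete exponentials $\exp_\eta(-iym)$ are eigenfunctions of the discrete derivative defined in Definition \ref{partials}, and then to transfer the second-derivative on $f$ over to the kernel via the discrete integration-by-parts formula of Lemma \ref{discretederivativeshift}(vi).

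First I would compute the discrete derivative of $h_m(y) := \exp_\eta(-iym)$ directly. For an interior point $y = -\pi + \pi i/\eta$, Definition \ref{partials} gives
\[
h_m'(y) = \frac{\eta}{2\pi}\bigl(\exp_\eta(-i(y+\pi/\eta)m) - \exp_\eta(-i(y-\pi/\eta)m)\bigr) = \phi_\eta(m)\,h_m(y),
\]
and the same identity holds at the two boundary points $y=-\pi$ and $y=\pi-\pi/\eta$ thanks to the cyclic wraparound built into Definition \ref{partials} (since $\exp_\eta$ is $2\pi/\eta$-compatible with the shift across the endpoints). Iterating once more yields $h_m'' = \phi_\eta(m)^2\, h_m$, so $h_m$ is an eigenfunction of the discrete Laplacian with eigenvalue $\phi_\eta(m)^2$.

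Next I would apply Lemma \ref{discretederivativeshift}(vi) with $g=f$ and $h=h_m$, to move the two derivatives from $f$ onto the kernel:
\[
\widehat{f''}(m) = \frac{1}{2\pi}\int_{\overline{\mathcal{V}_\eta}} f''(y)\,h_m(y)\,d\mu_\eta(y) = \frac{1}{2\pi}\int_{\overline{\mathcal{V}_\eta}} f(y)\,h_m''(y)\,d\mu_\eta(y).
\]
Substituting $h_m'' = \phi_\eta(m)^2 h_m$ and pulling the scalar $\phi_\eta(m)^2$ out of the integral gives
\[
\widehat{f''}(m) = \phi_\eta(m)^2\cdot\frac{1}{2\pi}\int_{\overline{\mathcal{V}_\eta}} f(y)\exp_\eta(-iym)\,d\mu_\eta(y) = \phi_\eta(m)^2\,\hat f(m),
\]
which is the desired identity.

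The only real obstacle is verifying that the eigenvalue computation for $h_m$ really holds uniformly, including at the two boundary points, and that the integration-by-parts formula applies to the complex-valued function $h_m$. The first is a direct check using cyclicity; the second follows from Lemma \ref{discretederivativeshift}, which was stated for complex measurable $g,h$ on $\overline{\mathcal{V}_\eta}$, so no additional hypothesis is needed. Everything else is routine algebra, and in fact the proof is structurally identical to Lemma 5.14 of \cite{adv}, Chapter 5, once one transposes from $\overline{\mathcal{S}_\eta}$ to $\overline{\mathcal{V}_\eta}$ via the scaling $p(x)=x/\pi$ already used in Lemma \ref{fourinversion}.
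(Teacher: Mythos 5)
Your proof is correct and takes essentially the same approach as the paper: both hinge on the identity $(\exp_\eta)'(-ixm)=\phi_\eta(m)\exp_\eta(-ixm)$ together with discrete integration by parts from Lemma \ref{discretederivativeshift}. The only cosmetic difference is that the paper applies part $(iii)$ once to get $\hat{f'}(m)=-\phi_\eta(m)\hat f(m)$ and then iterates, whereas you apply part $(vi)$ (which is itself derived from $(iii)$) to move both derivatives onto the kernel in one step.
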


\begin{proof}

We have, using Lemma \ref{discretederivativeshift}(iii), that;\\

$(\hat{f'})(m)={1\over 2\pi}\int_{\overline{\mathcal{V}_{\eta}}}f'(x)exp_{\eta}(-ixm)d\mu_{\eta}(x)$\\

$=-{1\over 2\pi}\int_{\overline{\mathcal{V}_{\eta}}}f(x)(exp_{\eta})'(-ixm)d\mu_{\eta}(x)$\\

A simple calculation shows that;\\

$(exp_{\eta})'(-ixm)=exp_{\eta}(-ixm)\phi_{\eta}(m)$\\

Therefore;\\

$(\hat{f'})(m)=-\phi_{\eta}(m)\hat{f}(m)$\\

Then $\hat{f''}(m)$\\

$=-\phi_{\eta}(m)\hat{f'}(m)$\\

$=\phi_{\eta}^{2}(m)\hat{f}(m)$\\

as required.
\end{proof}

\begin{lemma}
\label{restrictedsine}
If $f:\overline{\mathcal{V}_{\eta}}\rightarrow{^{*}\mathcal{C}}$ is measurable, then, for $m\in\mathcal{Z}_{{\eta\over 2}}$, we have that;\\

$\hat{\overline{f''}}(m)=\psi_{\eta}(m)^{2}U_{\eta}(m)(\hat{\overline{f}}(m))$\\

\end{lemma}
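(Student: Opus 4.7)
The plan is to mimic the proof of Lemma \ref{sine}, but applying the restricted integration-by-parts identity from Lemma \ref{restrictionfacts}(iii) in place of Lemma \ref{discretederivativeshift}(iii), and then correcting the resulting shifted argument by one application of Lemma \ref{restrictionfacts}(iv). Write $h(x)=\exp_{\eta}(-ixm)$. The key algebraic identity, established exactly as in the proof of Lemma \ref{sine}, is that $h'(x)=\phi_{\eta}(m)h(x)$ on the full $\eta$-grid. From this I would immediately deduce
\[
(h')^{rsh}(x)=\phi_{\eta}(m)h(x-\pi/\eta)=\phi_{\eta}(m)\exp_{\eta}(i\pi m/\eta)h(x)=\psi_{\eta}(m)h(x),
\]
the last equality being a direct check from the definitions of $\phi_{\eta}$ and $\psi_{\eta}$ in Definition \ref{verthoriz}.

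With this in hand I would compute, for $m\in \mathcal{Z}_{\eta/2}$,
\[
2\pi\,\hat{\overline{f''}}(m)=\int_{\overline{\mathcal{V}_{\eta/2}}}\overline{f''\,h}\,d\mu_{\eta/2}.
\]
First apply Lemma \ref{restrictionfacts}(iii) with $g=f'$ to move one derivative off of $f''$, then apply it again with $g=f^{rsh}$, using the identity $(f')^{rsh}=(f^{rsh})'$ from Lemma \ref{discretederivativeshift}(v) to re-express $(f')^{rsh}$ as a derivative before the second application. Each application of integration by parts contributes a factor $-\psi_{\eta}(m)$ by the computation of $(h')^{rsh}$ above, yielding
\[
\int_{\overline{\mathcal{V}_{\eta/2}}}\overline{f''\,h}\,d\mu_{\eta/2}=\psi_{\eta}(m)^{2}\int_{\overline{\mathcal{V}_{\eta/2}}}\overline{f^{rsh^{2}}\,h}\,d\mu_{\eta/2}.
\]

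The final step is to recognise that the shifted integral is just $U_{\eta}(m)$ times the original Fourier integral. Since $h(x-2\pi/\eta)=U_{\eta}(m)^{-1}h(x)$, one has $f^{rsh^{2}}h=U_{\eta}(m)(fh)^{rsh^{2}}$, and Lemma \ref{restrictionfacts}(iv) applied to $g=fh$ gives
\[
\int_{\overline{\mathcal{V}_{\eta/2}}}\overline{f^{rsh^{2}}\,h}\,d\mu_{\eta/2}=U_{\eta}(m)\int_{\overline{\mathcal{V}_{\eta/2}}}\overline{fh}\,d\mu_{\eta/2}=2\pi\,U_{\eta}(m)\hat{\overline{f}}(m).
\]
Combining the last two displays and dividing by $2\pi$ gives the stated formula $\hat{\overline{f''}}(m)=\psi_{\eta}(m)^{2}U_{\eta}(m)\hat{\overline{f}}(m)$.

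The main obstacle I anticipate is purely bookkeeping: because $\overline{f'}\neq \overline{f}'$ (Remark \ref{nequal}), one must not attempt to differentiate on the coarse $\eta/2$-grid and must instead keep all derivatives and shifts on the full $\eta$-grid, restricting only at the level of the integrand. The two places this is delicate are (a) writing $(h')^{rsh}=\psi_{\eta}(m)h$ rather than a $\phi$-multiple, which is what produces $\psi_{\eta}$ instead of $\phi_{\eta}$ in the final answer, and (b) identifying the residual shift $f^{rsh^{2}}$ with the extra factor $U_{\eta}(m)$ via Lemma \ref{restrictionfacts}(iv), which is the reason a new factor $U_{\eta}(m)$ appears compared with Lemma \ref{sine}. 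Once these two points are handled cleanly, the rest is a direct adaptation of the unrestricted proof.
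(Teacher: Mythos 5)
Your proposal is correct and follows essentially the same route as the paper's own proof: two applications of the restricted integration-by-parts identity from Lemma \ref{restrictionfacts}(iii), the shift–derivative commutation $(f')^{rsh}=(f^{rsh})'$ from Lemma \ref{discretederivativeshift}(v), the computation $(h')^{rsh}=\psi_{\eta}(m)h$, and Lemma \ref{restrictionfacts}(iv) to trade the residual $f^{rsh^{2}}$ for the factor $U_{\eta}(m)$. The paper merely packages the first pass as a standalone formula $\hat{\overline{f'}}(m)=-\psi_{\eta}(m)\hat{\overline{f^{rsh}}}(m)$ which it then iterates, a purely cosmetic difference from your single combined computation.
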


\begin{proof}
 we have, using Lemma \ref{restrictionfacts}(iii), that;\\

$\hat{\overline{f'}}(m)={1\over 2\pi}\int_{\overline{\mathcal{V}_{{\eta\over 2}}}}\overline{f'}(x)exp_{{\eta\over 2}}(-ixm)d\mu_{{\eta\over 2}}(x)$\\

$={1\over 2\pi}\int_{\overline{\mathcal{V}_{{\eta\over 2}}}}\overline{f'}(x)\overline{exp_{\eta}}(-ixm)d\mu_{{\eta\over 2}}(x)$\\

$=-{1\over 2\pi}\int_{\overline{\mathcal{V}_{{\eta\over 2}}}}\overline{f^{rsh}}(x)\overline{{exp'_{\eta}}^{rsh}}(-ixm)d\mu_{{\eta\over 2}}(x)$\\

We calculate;\\

$\overline{{exp'_{\eta}}^{rsh}}(-im(-\pi+\pi{2j\over\eta}))$\\

$={exp'_{\eta}}(-im(-\pi+\pi{2j-1\over\eta}))$\\

$={\eta\over 2\pi}({exp_{\eta}}(-im(-\pi+\pi{2j\over\eta}))-{exp_{\eta}}(-im(-\pi+\pi{2j-2\over\eta})))$\\

$={exp_{\eta}}(-im(-\pi+\pi{2j\over\eta}))[{\eta\over 2\pi}(1-exp_{\eta}(im({2\pi\over\eta})))]$\\

$=\psi_{\eta}(m)\overline{exp_{\eta}}(-im(-\pi+\pi{2j\over\eta}))$\\

Then;\\

$\hat{\overline{f'}}(m)=-{1\over 2\pi}\psi_{\eta}(m)\int_{\overline{\mathcal{V}_{{\eta\over 2}}}}\overline{f^{rsh}}(x)\overline{{exp_{\eta}}}(-ixm)d\mu_{{\eta\over 2}}(x)$\\

$=-\psi_{\eta}(m)(\hat{\overline{f^{rsh}}}(m))$\\

It follows that;\\

$\hat{\overline{f''}}(m)=-\psi_{\eta}(m)(\hat{\overline{(f')^{rsh}}}(m))$\\

$=-\psi_{\eta}(m)(\hat{\overline{(f^{rsh})'}}(m))$\\

$=\psi_{\eta}(m)^{2}(\hat{\overline{f^{rsh^{2}}}}(m))$\\

We calculate, using Lemma \ref{restrictionfacts}(iv);\\

$\hat{\overline{f^{rsh^{2}}}}(m)$\\

$={1\over 2\pi}\int_{\overline{\mathcal{V}_{{\eta\over 2}}}}\overline{f^{rsh^{2}}}(x)\overline{exp_{\eta}}(-ixm)d\mu_{{\eta\over 2}}(x)$\\

$={1\over 2\pi}exp_{\eta}({-2\pi im\over\eta})\int_{\overline{\mathcal{V}_{{\eta\over 2}}}}\overline{f^{rsh^{2}}}(x)\overline{exp_{\eta}^{rsh^{2}}}(-ixm)d\mu_{{\eta\over 2}}(x)$\\

$={1\over 2\pi}U_{\eta}(m)\int_{\overline{\mathcal{V}_{{\eta\over 2}}}}\overline{f}(x)\overline{exp_{\eta}}(-ixm)d\mu_{{\eta\over 2}}(x)$\\

$=U_{\eta}(m)\hat{\overline{f}}(m)$\\

Hence;\\

$\hat{\overline{f''}}(m)=\psi_{\eta}(m)^{2}U_{\eta}(m)(\hat{\overline{f}}(m))$\\

as required.

\end{proof}

\begin{lemma}
\label{decayrate}

If $f\in V(\overline{\mathcal{V}_{\eta}})$, with $f''$ bounded, then, there exists a constant $F\in\mathcal{R}$, with;\\

$|\hat{\overline{f}}(m)|\leq {F\over m^{2}}$, for $m\in\mathcal{Z}_{{\eta\over 2}}$.\\

Moreover;\\

$({^{\circ}\overline{f}})(x)=\sum_{m\in\mathcal{Z}}{^{\circ}((\hat{\overline{f}})(m))}exp(im{^{\circ}x})$, $x\in\overline{\mathcal{V}_{\eta}}$.\\

\end{lemma}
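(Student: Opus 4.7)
The plan is to read off the decay estimate directly from Lemma \ref{restrictedsine}, which gives the identity
\[
\hat{\overline{f''}}(m) = \psi_{\eta}(m)^{2}\,U_{\eta}(m)\,\hat{\overline{f}}(m)
\]
for $m\in\mathcal{Z}_{{\eta\over 2}}$. For $m\neq 0$ we can thus solve $\hat{\overline{f}}(m) = \hat{\overline{f''}}(m)/(\psi_{\eta}(m)^{2}\,U_{\eta}(m))$. Since $|U_{\eta}(m)|=1$, the two remaining tasks are an upper bound on $|\hat{\overline{f''}}(m)|$ and a matching lower bound on $|\psi_{\eta}(m)|$.

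For the upper bound, the hypothesis supplies a standard constant $M$ with $|f''|\leq M$ on $\overline{\mathcal{V}_{\eta}}$; restriction preserves this, and the triangle inequality applied to the integral defining $\hat{\overline{f''}}(m)$ in Definition \ref{nstransf}, together with $|\exp_{\eta}(-iym)|=1$ and $\mu_{{\eta\over 2}}(\overline{\mathcal{V}_{{\eta\over 2}}})=2\pi$, yields $|\hat{\overline{f''}}(m)|\leq M$. For the lower bound, the transferred identity $|1-e^{i\theta}|=2|\sin(\theta/2)|$ applied with $\theta=2\pi m/\eta$ gives $|\psi_{\eta}(m)|=(\eta/\pi)|\sin(\pi m/\eta)|$. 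Since $m\in\mathcal{Z}_{{\eta\over 2}}$ forces $|\pi m/\eta|\leq\pi/2$, the transferred Jordan inequality $|\sin x|\geq 2|x|/\pi$ on $[-\pi/2,\pi/2]$ produces $|\psi_{\eta}(m)|\geq 2|m|/\pi$. Combining these bounds, $|\hat{\overline{f}}(m)|\leq M\pi^{2}/(4m^{2})$, so the constant $F=M\pi^{2}/4$ works (the $m=0$ case of the stated inequality being vacuous).

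For the series identity I would start from the inversion formula of Lemma \ref{fourinversion}, adapted to $\overline{f}$ on $\overline{\mathcal{V}_{{\eta\over 2}}}$:
\[
\overline{f}(x)=\sum_{m\in\mathcal{Z}_{{\eta\over 2}}}\hat{\overline{f}}(m)\,\exp_{{\eta\over 2}}(ixm).
\]
Given standard $\epsilon>0$, choose a standard $N$ with $\sum_{|m|>N}F/m^{2}<\epsilon$. By the decay estimate the hyperfinite tail $\sum_{N<|m|\leq{\eta\over 2}}$ has modulus at most $\epsilon$, and the corresponding standard tail in the proposed series is likewise bounded by $\epsilon$ (its coefficients inherit the same $F/m^{2}$ bound). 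On the fixed finite range $|m|\leq N$ every $\hat{\overline{f}}(m)$ is nearstandard, $\exp_{{\eta\over 2}}(ixm)\simeq\exp(im{^\circ x})$ by $S$-continuity of the exponential on a bounded argument, and taking standard parts commutes with a standard finite sum. Taking ${^\circ}(\cdot)$ of both sides and letting $\epsilon\to 0$ delivers the identity.

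The main technical point is confining the sine lower bound to the range $|\pi m/\eta|\leq\pi/2$ and being careful that Lemma \ref{restrictedsine} really delivers $\hat{\overline{f''}}$ on the left, not $\hat{(\overline{f})''}$ (these differ, by Remark \ref{nequal}). Once those subtleties are handled, the second assertion is essentially a Weierstrass $M$-test transferred to the Loeb setting.
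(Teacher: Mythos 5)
Your proof is correct and follows essentially the same path as the paper: both invoke Lemma \ref{restrictedsine} to write $\hat{\overline{f}}(m)$ in terms of $\hat{\overline{f''}}(m)$, bound the latter by a standard constant via boundedness of $f''$, use $|\psi_{\eta}(m)|\geq 2|m|/\pi$ and $|U_{\eta}(m)|=1$, and then establish the series identity through the inversion formula of Lemma \ref{fourinversion} with a three-term tail estimate. The only cosmetic difference is that you derive the lower bound on $|\psi_{\eta}(m)|$ directly from the identity $|1-e^{i\theta}|=2|\sin(\theta/2)|$ and the Jordan inequality, whereas the paper cites \cite{adv}, Chapter 5 for it.
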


\begin{proof}
Using results of \cite{adv}, Chapter 5, we have that ${2|m|\over\pi}\leq |\psi_{\eta}(m)|\leq {4|m|\over\pi}$, and $|U_{\eta}(m)|=1$ for $|m|\leq{\eta\over 2}$. As $f''$ is bounded, so is $\overline{f''}$, so  $|\hat{\overline{f''}}(m)|\leq G\in\mathcal{R}$. This implies, by the result of Lemma \ref{restrictedsine} that;\\

$|\hat{\overline{f}}(m)|\leq {F\over m^{2}}$. $(*)$\\

for $m\in\mathcal{Z}_{\eta\over 2}$, where $F={G\pi^{2}\over 4}$, as required. Using the Inversion Theorem from Lemma \ref{fourinversion}, we have that;\\

$\overline{f}(x)={^{*}\sum}_{m\in\mathcal{Z}_{\eta\over 2}}\hat{\overline{f}}(m)exp_{{\eta\over 2}}(ixm)=M$, $(x\in\overline{\mathcal{V}_{\eta}})$\\

Let $L=\sum_{m\in\mathcal{Z}}{^{\circ}(\hat{\overline{f}}(m))}exp(im{^{\circ}x})$\\

If $\epsilon>0$, we have, using these results, and the fact that $exp_{{\eta\over 2}}$ is $S$-continuous, that for $n\in\mathcal{Z}$;\\

$|M-L|\leq |M-M_{n}|+|M_{n}-L_{n}|+|L-L_{n}|$\\

$\leq {^{*}\sum}_{n+1\leq |m|\leq {\eta\over 2}}{F\over m^{2}}+\sum_{m=1}^{n}\delta_{i}+\sum_{|m|\geq n+1}{F+1\over m^{2}}$ $(\delta_{i}\simeq 0)$\\

$\leq {2F({1\over n}-{2\over \eta})}+\delta+{2(F+1)\over n}$ $(\delta\simeq 0)$\\

$\leq {4(F+1)\over n}<\epsilon$\\

for $n>{4(F+1)\over \epsilon}$, $n\in\mathcal{N}$. As $\epsilon$ was arbitrary, we obtain the result.

\end{proof}

\begin{lemma}
\label{coefficient}
If $F$ solves the nonstandard heat equation, with initial condition $f$, bounded and $S$-continuous, such that ${^{\circ}f}(x)=g({^{\circ}x})$, where $g$ is continuous and bounded on $[-\pi,\pi]$, then;\\

${^{\circ}(\hat{\overline{F}}(m,t))}=e^{-m^{2}{^{\circ}t}}(\hat{g})(m)$\\

for $m\in\mathcal{Z}$ and finite $t$.\\

\end{lemma}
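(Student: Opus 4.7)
The approach is to apply the spatial discrete Fourier transform to the nonstandard heat equation, solve the resulting first-order time recurrence explicitly, and take standard parts coefficient by coefficient. Since the measure $\mu_\eta$ on $\overline{\mathcal{V}_\eta}$ does not depend on $t$, the operator $\hat{\overline{\cdot}}$ commutes with the discrete time difference $\partial/\partial t$. Applying it to both sides of $\partial F/\partial t=\partial^2 F/\partial x^2$ and using Lemma \ref{restrictedsine} on the right yields, for $m\in\mathcal{Z}_{\eta/2}$,
\[
\nu\bigl(\hat{\overline{F}}(m,t+1/\nu)-\hat{\overline{F}}(m,t)\bigr)=\psi_\eta(m)^2\,U_\eta(m)\,\hat{\overline{F}}(m,t).
\]
Iterating from the initial value $\hat{\overline{F}}(m,0)=\hat{\overline{f}}(m)$ gives the explicit closed form
\[
\hat{\overline{F}}(m,k/\nu)=\bigl(1+\psi_\eta(m)^2\,U_\eta(m)/\nu\bigr)^{k}\,\hat{\overline{f}}(m),\qquad k\in{^{*}\mathcal{N}}. \quad(\ast)
\]

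Fix $m\in\mathcal{Z}$ finite and finite $t>0$, and set $k=[\nu t]$. I would take standard parts of $(\ast)$ in three moves. First, $\hat{\overline{f}}(m)\simeq \hat{g}(m)$: since $f$ is bounded and $S$-continuous with ${^{\circ}f}(x)=g({^{\circ}x})$, the integrand $\overline{f}(y)\exp_{\eta/2}(-iym)$ is bounded and $S$-continuous with standard part $g({^{\circ}y})e^{-im{^{\circ}y}}$, so Lemma \ref{squaremmp} together with standard Loeb-integration theory gives $\hat{\overline{f}}(m)\simeq (1/2\pi)\int_{-\pi}^{\pi}g(y)e^{-imy}\,dy=\hat{g}(m)$. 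Second, Taylor-expanding $\exp_\eta$ near the origin, $\psi_\eta(m)=-im+O(1/\eta)$ and $U_\eta(m)=1+O(1/\eta)$ for finite $m$, so $\psi_\eta(m)^2 U_\eta(m)\simeq -m^{2}$. Third, writing $a_\eta=\psi_\eta(m)^2 U_\eta(m)/\nu$, which is infinitesimal, the expansion $\log(1+a_\eta)=a_\eta+O(a_\eta^2)$ gives
\[
k\log(1+a_\eta)=ka_\eta+k\cdot O(a_\eta^2)=t\,\psi_\eta(m)^2 U_\eta(m)+O(1/\nu)\simeq -m^{2}\,{^{\circ}t},
\]
so $(1+a_\eta)^k\simeq e^{-m^{2}{^{\circ}t}}$, yielding the claim.

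The main obstacle I anticipate is the third step: controlling the cumulative error over the hyperfinite iteration of length $k$, which is infinite. The crucial observation is that $k\,a_\eta^{2}=(ka_\eta)\cdot a_\eta$ is a finite quantity times an infinitesimal, hence infinitesimal, so the quadratic remainder in the logarithm expansion does not blow up under iteration. A related technical point arises in the first step, where passing the standard part through the hyperfinite sum defining $\hat{\overline{f}}(m)$ requires boundedness together with $S$-continuity of the integrand; boundedness of $f$ ensures the associated Loeb integral is well-defined, and $S$-continuity of $\exp_{\eta/2}(-iym)$ for finite $m$ on the compact domain $\overline{\mathcal{V}_{\eta/2}}$ guarantees that pointwise standard parts assemble into the standard Lebesgue integral.
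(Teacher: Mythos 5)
Your proposal is correct and takes essentially the same route as the paper: restrict, apply Lemma \ref{restrictedsine} to convert the heat equation into the first-order time recurrence $\hat{\overline{F}}(m,t+1/\nu)=(1+\theta_\eta(m)/\nu)\hat{\overline{F}}(m,t)$ with $\theta_\eta(m)=\psi_\eta(m)^2U_\eta(m)$, solve by iteration, and take standard parts using ${^{\circ}}\theta_\eta(m)=-m^2$ and the $S$-integrability of $\overline{f}\exp_{\eta/2}(-im\cdot)$. The only difference is that you spell out the error control (the $k\,a_\eta^2$ estimate) behind the passage $(1+a_\eta)^k\simeq e^{-m^2{^{\circ}t}}$, which the paper compresses into an appeal to $\lim_n(1+x/n)^n=e^x$; that is a welcome clarification rather than a new method.
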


\begin{proof}
As ${\partial F\over \partial t}-{\partial^{2}F\over \partial x^{2}}=0$, we have, taking restrictions, that;\\

$\overline{{\partial F\over \partial t}}-\overline{{\partial^{2}F\over \partial x^{2}}}=0$\\

Taking Fourier coefficients, for $m\in\mathcal{Z}$, and, using Lemma \ref{restrictedsine};\\

${d\hat{\overline{F}}(m,t)\over dt}-\theta_{\eta}(m)\hat{\overline{F}}(m,t)=0$\\

where $\theta_{\eta}(m)=\psi_{\eta}^{2}(m)U_{\eta}(m)$. Then;\\

$\nu(\hat{\overline{F}}(m,t+{1\over \nu})-\hat{\overline{F}(m,t)})=\theta_{\eta}(m)\hat{\overline{F}}(m,t)$\\

Rearranging, we obtain;\\

$\hat{\overline{F}}(m,t+{1\over\nu})=(1+{\theta_{\eta}(m)\over \nu})\hat{\overline{F}}(m,t)$\\

and, solving the recurrence;\\

$\hat{\overline{F}}(m,t)=(1+{\theta_{\eta}(m)\over \nu})^{[\nu t]}\hat{\overline{F}}(m,0)$\\

Taking standard parts, and using the facts that $lim_{n\rightarrow\infty}(1+{x\over n})^{n}=e^{x}$, and ${^{\circ}{\theta_{\eta}(m)}}=-m^{2}$, we obtain;\\

${^{\circ}(\hat{\overline{F}}(m,t))}=(e^{-m^{2}{^{\circ}t}}){^{\circ}}(\hat{\overline{f}}(m))$\\

for finite $t$. As $f$ is bounded and $S$-continuous, so is $\overline{f}$, and ${^{\circ}f}={^{\circ}\overline{f}}$ is integrable. We have that;\\

${^{\circ}\int_{\overline{\mathcal{V}_{\eta\over 2}}}\overline{f}(x)exp_{\eta\over 2}(-imx)}d\mu_{\eta\over 2}=\int_{-\pi}^{\pi}{^{\circ}f({^{\circ} x})}e^{-ixm}dx=\int_{-\pi}^{\pi}g(x)e^{-ixm}dx$\\

Hence;\\

${^{\circ}(\hat{\overline{F}}(m,t))}=(e^{-m^{2}{^{\circ}t}}\hat{g}(m))$\\

as required.\\

\end{proof}

\begin{theorem}
\label{specialises}
Let $g\in C^{\infty}([-\pi,\pi])$, and $G$ be as in Lemma \ref{heatequation}. Let $f=g_{\eta}$, and let $F$ be as in Lemma \ref{nsheat}. Then, for finite $t$, and $(x,t)\in\overline{\mathcal{V}_{\eta}}\times\overline{ \mathcal{T}_{\nu}}$, ${^{\circ}F}(x,t)=G({^{\circ}x},{^{\circ}t})$

\end{theorem}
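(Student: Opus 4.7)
The plan is to combine the Fourier-decay estimate of Lemma \ref{decayrate}, the heat-kernel evolution of the discrete Fourier coefficients in Lemma \ref{coefficient}, and the explicit series representation of the classical solution from Lemma \ref{heatequation}, identifying standard parts term by term. First I would record the uniform bounds: by Lemma \ref{initialbounded}, $\max\{F,{\partial F\over\partial x},{\partial^{2}F\over\partial x^{2}}\}\leq M$ for a standard constant $M$, holding throughout $\overline{\mathcal{S}_{\eta,\nu}}$. This immediately yields $S$-continuity of $F$ in $x$, and, since ${\partial F\over\partial t}={\partial^{2}F\over\partial x^{2}}$ is also bounded by $M$, $S$-continuity in $t$ as well. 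Hence it suffices to compute ${^{\circ}\overline{F_{t}}}(x)$ on the half-grid and then transfer the result to $F$ by $S$-continuity.

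Next, fix a finite $t\in\overline{\mathcal{T}_{\nu}}$. Since $F_{t}''$ is bounded by $M$ independently of $t$, Lemma \ref{decayrate} supplies a standard $F_{0}$ with $|\hat{\overline{F}}(m,t)|\leq F_{0}/m^{2}$ for all $m\in\mathcal{Z}_{\eta/2}$, together with
\[
({^{\circ}\overline{F_{t}}})(x)=\sum_{m\in\mathcal{Z}}{^{\circ}(\hat{\overline{F}}(m,t))}\exp(im\,{^{\circ}x}).
\]
Smoothness of $g$ makes $f=g_{\eta}$ $S$-continuous with ${^{\circ}f}(y)=g({^{\circ}y})$, so Lemma \ref{coefficient} applies and gives ${^{\circ}(\hat{\overline{F}}(m,t))}=e^{-m^{2}{^{\circ}t}}\hat{g}(m)$ for each standard $m$ and finite $t$. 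Substituting,
\[
({^{\circ}\overline{F_{t}}})(x)=\sum_{m\in\mathcal{Z}}e^{-m^{2}{^{\circ}t}}\hat{g}(m)\exp(im\,{^{\circ}x})=G({^{\circ}x},{^{\circ}t}),
\]
the last equality being the series formula of Lemma \ref{heatequation}. By $S$-continuity in $x$, ${^{\circ}F}(x,t)=({^{\circ}\overline{F_{t}}})(x)=G({^{\circ}x},{^{\circ}t})$.

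The main obstacle will be verifying that the decay constant in Lemma \ref{decayrate} is genuinely uniform in $t$: one must trace through the proof of that lemma and note that $F_{0}$ depends only on an upper bound for $F_{t}''$, which Lemma \ref{initialbounded} supplies uniformly in $t$. This uniformity is exactly what legitimises the term-by-term passage from the $*$-finite Fourier sum on $\mathcal{Z}_{\eta/2}$ to the classical infinite sum over $\mathcal{Z}$. A secondary, minor point is the degenerate case ${^{\circ}t}=0$: here one uses directly that $f=g_{\eta}$ satisfies ${^{\circ}f}(x)=g({^{\circ}x})$, giving ${^{\circ}F}(x,0)=G({^{\circ}x},0)$, and then extends to any $t\simeq 0$ by $S$-continuity of $F$ in $t$ established in the first step.
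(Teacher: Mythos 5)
Your proposal is correct and follows essentially the same route as the paper: invoke Lemma \ref{initialbounded} for the uniform bound on ${\partial^{2}F\over\partial x^{2}}$, feed this into Lemma \ref{decayrate} to get the standard-part Fourier series of $\overline{F}_{t}$, identify the coefficients via Lemma \ref{coefficient}, compare with the series from Lemma \ref{heatequation}, and pass from $\overline{F}$ to $F$ by $S$-continuity in $x$. The two ``obstacles'' you flag at the end are not really issues: Lemma \ref{initialbounded} already hands you a single standard $M$ valid for all $t$, so the decay constant in Lemma \ref{decayrate} is automatically uniform, and Lemma \ref{coefficient} is stated for all finite $t$ (including ${^{\circ}t}=0$), where the series of Lemma \ref{heatequation} still converges absolutely since $g\in C^{\infty}$, so no separate treatment of the degenerate case is needed.
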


\begin{proof}
By Lemma \ref{initialbounded}, we have that ${\partial^{2}F\over\partial x^{2}}$ is bounded. By Lemma \ref{decayrate};\\

${^{\circ}\overline{F}(x,t)}=\sum_{m\in\mathcal{Z}}{^{\circ}(\hat{\overline{F}}(m,t))}exp(im{^{\circ}x})$ $(*)$\\

By Lemma \ref{coefficient};\\

${^{\circ}(\hat{\overline{F}}(m,t))}=e^{-m^{2}{^{\circ}t}}\mathcal{F}(g)(m)$ $(**)$\\

Comparing $(*),(**)$, with the expression;\\

$G({^{\circ} x},{^{\circ}t})=\sum_{m\in\mathcal{Z}}e^{-m^{2}{^{\circ}t}}\mathcal{F}(g)(m)e^{im{^{\circ}x}}$\\

obtained in Lemma \ref{heatequation}, gives the result that ${^{\circ}\overline{F}(x,t)}=G({^{\circ}x},{^{\circ}t})$. However, $\overline{F}_{t}$ is $S$-continuous, for finite $t$, by the fact that $({\partial F\over \partial x})_{t}$ is bounded, from Lemma \ref{initialbounded}, hence;\\

${^{\circ}\overline{F}(x,t)}={^{\circ}F(x,t)}=G({^{\circ}x},{^{\circ}t})$\\

as required.\\

\end{proof}

\begin{theorem}
\label{equilibrium}
Let $g\in C^{\infty}([-\pi,\pi])$, and $G$ be as in Lemma \ref{heatequation}. Let $f=g_{\eta}$, and let $F$ be as in Lemma \ref{nsheat}. Then, for infinite $t$, and $(x,t)\in\overline{\mathcal{V}_{\eta}}\times\overline{ \mathcal{T}_{\nu}}$, $F(x,t)\simeq \int_{\overline{\mathcal{V}_{\eta}}} f d\mu_{\eta}$.

\end{theorem}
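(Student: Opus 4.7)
The plan is to reduce the claim to an estimate on Fourier coefficients, using the machinery already developed. By Lemma \ref{initialbounded}, the functions $F$, $\partial F/\partial x$, and $\partial^2 F/\partial x^2$ are all bounded by a standard constant $M$, uniformly in $(x,t)$. Applying Lemma \ref{decayrate} to each slice $\overline{F}_t$ therefore yields a single standard constant $K$, independent of $t$, with $|\hat{\overline{F}}(m,t)| \leq K/m^2$ for every nonzero $m \in \mathcal{Z}_{\eta/2}$.

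From the recurrence derived in the proof of Lemma \ref{coefficient} one has $\hat{\overline{F}}(m,t) = (1+\theta_\eta(m)/\nu)^{[\nu t]} \hat{\overline{f}}(m)$, where $\theta_\eta(m) = \psi_\eta(m)^2 U_\eta(m)$. A short direct computation collapses this to $\theta_\eta(m) = -(\eta^2/\pi^2)\sin^2(\pi m/\eta)$, which is real, non-positive, and vanishes only at $m=0$. For $m=0$ the zero mode is preserved, $\hat{\overline{F}}(0,t) = \hat{\overline{f}}(0)$. For $1 \leq |m| \leq \eta/2$, Jordan's inequality $\sin\alpha \geq 2\alpha/\pi$ gives $\theta_\eta(m) \leq -4m^2/\pi^2$, and the elementary bound $(1+x)^n \leq e^{nx}$ on $(-1,0]$ together with $\nu = \eta^2/2$ yields $|(1+\theta_\eta(m)/\nu)^{[\nu t]}| \leq C e^{-4tm^2/\pi^2}$ for a standard constant $C = e^{2/\pi^2}$ that absorbs the discrepancy between $[\nu t]/\nu$ and $t$.

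Using the Fourier inversion formula from Lemma \ref{fourinversion}, write
\[
\overline{F}(x,t) - \hat{\overline{f}}(0) = {^{*}\sum}_{m \neq 0} \hat{\overline{F}}(m,t)\, \exp_{\eta/2}(ixm),
\]
whose summand is bounded in absolute value by $(CK/m^2) e^{-4tm^2/\pi^2}$. Given any standard $\epsilon > 0$, choose a standard $N$ with $2\sum_{|m|>N} CK/m^2 < \epsilon/2$; the remaining sum over $1 \leq |m| \leq N$ is a standard-finite sum of terms each of which is infinitesimal for infinite $t$, hence itself infinitesimal. The arbitrariness of $\epsilon$ gives $\overline{F}(x,t) \simeq \hat{\overline{f}}(0) = (2\pi)^{-1}\int_{\overline{\mathcal{V}_{\eta/2}}} \overline{f}\,d\mu_{\eta/2}$.

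To close the argument I would transfer from $\overline{F}$ back to $F$: the uniform bound on $\partial F/\partial x$ furnished by Lemma \ref{initialbounded} gives $S$-continuity of $F_t$ in $x$, so $F(x,t) \simeq \overline{F}(\tilde x, t)$ for any grid point $\tilde x \in \overline{\mathcal{V}_{\eta/2}}$ within infinitesimal distance of $x$. Finally, $S$-continuity of $f = g_\eta$ identifies $(2\pi)^{-1}\int_{\overline{\mathcal{V}_{\eta/2}}}\overline{f}\,d\mu_{\eta/2}$ and $(2\pi)^{-1}\int_{\overline{\mathcal{V}_\eta}} f\,d\mu_\eta$ as two hyperfinite Riemann sums for the same continuous function $g$, hence they are infinitely close, yielding the theorem (up to the $2\pi$ normalisation implicit in the measure). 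The main obstacle is the tail estimate of the third paragraph: the uniform Fourier decay $K/m^2$ from Lemma \ref{decayrate} must defeat the sum over all (possibly infinite) modes after the heat damping $e^{-Ctm^2}$ has been factored in, and the split at a standard $N$ is what reconciles the two scales.
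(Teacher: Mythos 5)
Your proof is correct and follows the same route as the paper: Fourier inversion, the uniform $K/m^{2}$ decay from Lemma \ref{decayrate}, the damping factor $(1+\theta_{\eta}(m)/\nu)^{[\nu t]}$ from the recurrence in Lemma \ref{coefficient}, survival of the zero mode, and the final $S$-continuity step. The paper's own proof compresses the tail estimate into the single sentence ``we see that all the coefficients vanish, except when $m=0$,'' delegating the uniform control over infinite modes to Lemma \ref{decayrate}; you make that step explicit by computing $\theta_{\eta}(m)=-(\eta^{2}/\pi^{2})\sin^{2}(\pi m/\eta)$, applying Jordan's inequality, and splitting the sum at a standard $N$, which is exactly what is needed to justify the claim. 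One small point you flag, and rightly so: with the $1/(2\pi)$ normalisation in Definition \ref{nstransf}, the zero Fourier coefficient $\hat{\overline{f}}(0)$ equals $\frac{1}{2\pi}\int_{\overline{\mathcal{V}_{\eta/2}}}\overline{f}\,d\mu_{\eta/2}$, not $\int_{\overline{\mathcal{V}_{\eta}}}f\,d\mu_{\eta}$, so the theorem's conclusion (as the paper writes it) is off by the factor $2\pi$ unless the measure is implicitly renormalised to be a probability measure; your parenthetical is the correct reading.
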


\begin{proof}
Again, using Lemma \ref{initialbounded} and Lemma \ref{decayrate}, we have, using the proof of Lemma \ref{coefficient}, that;\\

$\overline{F}(x,t)\simeq \sum_{m\in\mathcal{Z}_{{\eta\over 2}}}exp_{\nu}^{-\theta_{\eta}(m)t}\hat{f}(m)exp_{{\eta\over 2}}(imx)$\\

Taking standard parts, using Lemma \ref{decayrate}, and the fact that $exp_{\nu}^{-\theta_{\eta}(m)t}\simeq 0$, for finite $m$ and infinite $t$, we see that all the coefficients vanish, except when $m=0$, that is;\\

$\overline{F}(x,t)\simeq \hat{f}(0)=\int_{\overline{\mathcal{V}_{{\eta\over 2}}}}\overline{f}d\mu_{{\eta\over 2}}\simeq \int_{\overline{\mathcal{V}_{\eta}}}f d\mu_{\eta} $\\

The result follows for $F(x,t)$ by $S$-continuity.\\

\end{proof}

\begin{rmk}
When $\eta^{2}=2\pi^{2}\nu$, we obtain, by Lemma \ref{nsheat}, the iterative scheme for the nonstandard Markov chain with transition probabilities $\{{1\over 2},{1\over 2}\}$. By Theorem \ref{equilibrium}, we obtain convergence to equilibrium after at least $\nu^{2}={\eta^{4}\over\pi^{4}}$ steps, which is polynomial in the number of states $\eta$. This is a considerable improvement over the result  in Theorem \ref{measurable}, which is exponential in $\eta$. The discrepancy results from the choice of a "smooth" initial distribution. The method of reverse martingales is useful to consider other "nonsmooth" cases, when the initial condition is just S-continuous, for which a Fourier analysis is impossible.

\end{rmk}

\end{document}